\documentclass[11pt]{article}     

\usepackage{graphicx}
\usepackage{url}
\usepackage{color}

\usepackage[mathscr]{euscript}		

\usepackage{dsfont}

\usepackage{psfrag}			
\usepackage{hyperref}

\usepackage{amsmath}
\usepackage{amsthm}

\usepackage{amssymb}

\usepackage{caption} 

\usepackage{anysize}

\usepackage{wasysym}

\usepackage[shortlabels]{enumitem}




\newtheorem{theorem}{Theorem}[section]
\newtheorem{lemma}[theorem]{Lemma}

\newtheorem{proposition}[theorem]{Proposition}
\newtheorem{remark}[theorem]{Remark}

\newtheorem{definition}[theorem]{Definition}

\numberwithin{equation}{section}

\setlength{\parindent}{0in}

\setlength{\headheight}{0cm}
\setlength{\headsep}{0cm}
\addtolength{\textheight}{0.5cm}
\addtolength{\textwidth}{-.5cm}
\addtolength{\oddsidemargin}{-.4cm}

\definecolor{Red}{rgb}{1,0,0}
\definecolor{Blue}{rgb}{0,0,1}
\definecolor{Olive}{rgb}{0.41,0.55,0.13}
\definecolor{Yarok}{rgb}{0,0.5,0}
\definecolor{Green}{rgb}{0,1,0}
\definecolor{MGreen}{rgb}{0,0.8,0}
\definecolor{DGreen}{rgb}{0,0.55,0}
\definecolor{Yellow}{rgb}{1,1,0}
\definecolor{Cyan}{rgb}{0,1,1}
\definecolor{Magenta}{rgb}{1,0,1}
\definecolor{Orange}{rgb}{1,.5,0}
\definecolor{Violet}{rgb}{.5,0,.5}
\definecolor{Purple}{rgb}{.75,0,.25}
\definecolor{Brown}{rgb}{.75,.5,.25}
\definecolor{Grey}{rgb}{.7,.7,.7}
\definecolor{Black}{rgb}{0,0,0}

\definecolor{Red}{rgb}{1,0,0}
\definecolor{Blue}{rgb}{0,0,1}
\definecolor{Olive}{rgb}{0.41,0.55,0.13}
\definecolor{Yarok}{rgb}{0,0.5,0}
\definecolor{Green}{rgb}{0,1,0}
\definecolor{MGreen}{rgb}{0,0.8,0}
\definecolor{DGreen}{rgb}{0,0.55,0}
\definecolor{Yellow}{rgb}{1,1,0}
\definecolor{Cyan}{rgb}{0,1,1}
\definecolor{Magenta}{rgb}{1,0,1}
\definecolor{Orange}{rgb}{1,.5,0}
\definecolor{Violet}{rgb}{.5,0,.5}
\definecolor{Purple}{rgb}{.75,0,.25}
\definecolor{Brown}{rgb}{.75,.5,.25}
\definecolor{Grey}{rgb}{.7,.7,.7}
\definecolor{Black}{rgb}{0,0,0}

\newcommand{\ignore}[1]{{}}

\sloppy


\newcommand{\EE}{{\mathbb{E}}}
\newcommand{\PP}{{\mathbb{P}}}

\newtheorem{notation}[theorem]{Notation}


\begin{document}

\title{Harnack inequality for non-local Schr\"odinger 
operators.}

\author{
  Siva Athreya
  \thanks{8th Mile Mysore Road, Indian Statistical Institute,
         Bangalore 560059, India.
    Email: \url{athreya@ms.isibang.ac.in}}
  \and
  Koushik Ramachandra
  \thanks{Oklahoma State University, Stillwater, OK-74074 
    Email: \url{koushik.ramachandran@okstate.edu}}
  }


\maketitle

\begin{abstract}
Let $x \in \mathbb{R}^d$, $d \geq 3,$ and $f: \mathbb{R}^d \rightarrow \mathbb{R}$ be a twice
differentiable function with all second partial derivatives being
continuous. For $1\leq i,j \leq d$, let $a_{ij} : \mathbb{R}^d \rightarrow \mathbb{R}$
be a differentiable function with all partial derivatives being
continuous and bounded. We shall consider the Schr\"odinger operator
associated to
\begin{eqnarray*}
 \mathcal{L}f(x) &=& \frac12 \sum_{i=1}^d \sum_{j=1}^d
 \frac{\partial}{\partial x_i} \left(a_{ij}(\cdot) \frac{\partial
   f}{\partial x_j}\right)(x) + \int_{\mathbb{R}^d\setminus{\{0\}}}
      [f(y) - f(x) ]J(x,y)dy
 \end{eqnarray*}
where $J: \mathbb{R}^d \times \mathbb{R}^d \rightarrow \mathbb{R}$ is a symmetric measurable
function. Let $q: \mathbb{R}^d \rightarrow \mathbb{R}.$ We specify assumptions on $a,
q,$ and $J$ so that non-negative bounded solutions to $${\mathcal L}f
+ qf = 0$$ satisfy a Harnack inequality. As tools we also prove a Carleson estimate, a uniform Boundary Harnack Principle and a 3G inequality for solutions to ${\mathcal L}f = 0.$
\end{abstract}

\section{Introduction}

In this paper we study the Harnack inequality for solutions to the Schr\"odinger operator associated to  a specific class of non-local
operators.  Let $x \in \mathbb{R}^d$, $ d \geq 3$, $i,j \in \{1, \ldots, d\}$ and
$q,a_{ij}: \mathbb{R}^d \rightarrow \mathbb{R}$ and $J: \mathbb{R}^d \times \mathbb{R}^d \rightarrow
\mathbb{R}$. We consider positive bounded solutions to the Schr{\"o}dinger
equation, $\mathcal{L} u + q u = 0$ where
\begin{eqnarray}\label{L}
\mathcal{L}f(x) &=& \frac12 \sum_{i=1}^d \sum_{j=1}^d
\frac{\partial}{\partial x_i} \left(a_{ij}(\cdot) \frac{\partial
  f}{\partial x_j}\right)(x) + \int_{\mathbb{R}^d\setminus{\{0\}}}
     [f(y) - f(x) ]J(x,y)dy
  \end{eqnarray}
We show that when $q$ is in the Kato class, positive bounded
solutions to $\mathcal{L} u + q u = 0$ satisfy a Harnack inequality
(See Theorem \ref{Harnack} for the precise statement). 

For proving our main result we require several tools from the Potential
theory of ${\mathcal L}$.  In particular we prove a Carleson estimate
(Theorem \ref{Carleson}), a uniform Boundary Harnack Principle
(Theorem \ref{BdryHarnack}) and a 3G inequality (Proposition \ref{3G})
for solutions to ${\mathcal L}u = 0.$ In keeping with our objective we
have proved these results only for balls, the same proof should go
through in $C^{1,1}$ domains (in particular those domains as in
Notation \ref{not1}). To prove them we borrow techniques developed in
\cite{chensong2}, \cite{chensong}, \cite{cskv} and \cite{cskv2} for an
operator closely related to ${\mathcal L}$ (see (\ref{lla}).

The importance of the Harnack inequality and its implications to the
theory of elliptic and parabolic partial differential equations are
well known. We refer the interested reader to the survey article by
Kassmann \cite{kas}, and the references therein for a comprehensive
review of the classical Harnack inequality (i.e. for elliptic and
parabolic operators).

The Harnack inequality for solutions to (local) Schr\"{o}dinger
operators was first proved in \cite{AIS}, where they considered the
operator $\Delta + q,$ where $q$ is a given potential belonging to an
appropriate function space. It was shown in \cite{AIS} that
non-negative solutions to $\Delta u + qu = 0$ satisfy a Harnack
inequality. We refer the reader to \cite{CZ} for a detailed account of
the same. When $\Delta$ was replaced by a second order elliptic
operator in divergence form, the Harnack inequality was established
using analytic methods by \cite{cfg} and probabilistic methods in
\cite{cfz}. These would correspond to the case $J\equiv 0$, $a$ is
uniformly elliptic and bounded in (\ref{L}), and $q$ being in Kato
class (see Assumption (A) and (Q) below).

The simplest and most well studied pure jump process is the symmetric
stable process of index $\alpha$ (i.e. in our notation $J(x,y) =
\frac{c(d,\alpha)}{\mid x-y \mid^{d+\alpha} } $, and $a\equiv 0$,
${\mathcal L},$ reduces to $\Delta^{\frac{\alpha}{2}}$). This is a
Levy process whose infinitesimal generator is the fractional Laplacian
$\Delta ^{\alpha/2},$ defined by $$\Delta ^{\alpha/2}u(x) = c(d, \alpha)\lim_{\epsilon\to
  0}\int_{|y-x|>\epsilon} \dfrac{u(y)-u(x)}{\mid y-x\mid^{d+\alpha}}
dy,$$
\noindent where $0 < \alpha\leq 2,$ and $c(d,\alpha)$ is an
appropriate constant. The above limit exists if $u$ is
$C^2_b(\mathbb{R}^d)$ (the set of all bounded continuously twice
differentiable functions).  The Harnack inequality for non-negative
$\alpha$-harmonic functions was proved in the $1930$'s by M. Riesz by
using the corresponding Poisson Kernel representation (See
\cite{lan}). We refer the reader to \cite{lnm} for a thorough
introduction to the potential analysis of the fractional Laplacian and
other related operators.  \cite{Balev} consider general non-local
operators $\mathcal A$ defined by $$\mathcal Au(x)=
\int_{\mathbb{R}^d\setminus\{0\}}\left[u(x+h)- u(x) - 1_{\mid h\mid
    \leq 1}h.\nabla u(x)\right]\dfrac{J(x,x+h)}{\mid h\mid^{d
    +\alpha}}dh,$$ where $J$ is symmetric in $h$, uniformly bounded
above and below from $0.$ They prove a Harnack inequality holds for
non-negative $\mathcal A$-harmonic functions. 

The Harnack inequality for positive solutions to the Schr\"odinger
operator associated to $\Delta^{\frac{\alpha}{2}}$ has been proved in
\cite{bb} when $q$ is in the Kato-class associated with the pure-jump
process. To the best of our knowledge the Harnack inequality for
positive solutions to the Schr\"{o}dinger operator corresponding to
the general framework as in (\ref{L}) is not known. In areas such as
risk-sensitive control theory Harnack inequality for the Schr\"odinger
operator associated with (\ref{L}) is needed (see \cite{kumpal}). To
establish this inequality is the main purpose of this article.

A Harnack inequality was established in \cite{foon1} for positive
solutions to $\mathcal{B}u=0$,
\begin{eqnarray} \label{foonla}
 \mathcal{B}f(x) &=& \frac{1}{2}\sum_{i=1}^d \sum_{j=1}^d a_{ij}(x)
 \frac{\partial^2 f}{\partial{x_i}\partial{x_j}}(x) + \sum_{i=1}^d
 b_i(x) \frac{\partial f}{\partial{x_i}}(x) \nonumber \\ && +
 \int_{\mathbb{R}^d\setminus{\{0\}}} [f(x+h) - f(x) - 1_{\mid h\mid \leq
     1}h\cdot\nabla f(x)]J(x,x+h)dh,
 \end{eqnarray}
 with suitable assumptions on $a_{ij}$, $b_i$ and the kernel $J$. When
 ${\mathcal B}$ is in divergence form, (it is the same as ${\mathcal
   L}$ in (\ref{L})), a parabolic Harnack inequality (which implies
 the elliptic Harnack inequality) was estabilshed in \cite{CK2} for
 positive bounded solutions to ${\mathcal L} u = 0$ (See Proposition
 \ref{hq0} below). The non-local operator
$\mathcal L$ can be written as a sum $\mathcal L_c + \mathcal L_j$
where $\mathcal L_c$ is a differential operator, corresponding to the diffusion
part of the process and $\mathcal L_j$ an integral operator, corresponding to
the jump part of the same process.  The absence of scaling makes the
study of such processes difficult. When $J(x,y) =
\frac{c(d,\alpha)}{\mid x-y \mid^{d+\alpha }} $,and $a (\cdot)$ is the
constant function $a$, the operator reduces to ${\mathcal L}$ of the form
\begin{eqnarray} \label{lla}
{\mathcal L}  &=& a \Delta  +  \Delta^{\frac{\alpha}{2}}.  
\end{eqnarray}
Recently \cite{bkk} have proved a boundary Harnack inequality for
jump-type Markov processes on metric measure state spaces, under
comparability estimates of the jump kernel and Urysohn-type property
of the domain of the generator of the process.  The result holds for a
very general class of Markov process but does not include generators
in the general form  given by (\ref{L}).

The Markov processes, harmonic functions and Green function associated
with (\ref{lla}) have been well studied in a series of works by
several authors in \cite{chensong2}, \cite{chensong}, \cite{cskv} and
\cite{cskv2}. We shall use several of the results from these papers
and techniques inspired by these works. 

Throughout this paper all constants will be denoted by $c_1, c_2
\ldots$ They are all positive valued and their values are not
important. Their dependencies on parameters if needed will be
mentioned inside bracket for e.g $c_1(d)$. We will begin with
numbering afresh in each new result and proof.

\subsection{Preliminaries and Main result}\label{mrlt}

$\mbox{\hspace{1in}}$
\medskip

 For any $x \in \mathbb{R}^d$ and $r >0$, we set $\mid x \mid =
 \sqrt{\sum_{i=1}^d x_i^2} \mbox{ and }  B(x,r) = \{ y \in \mathbb{R}^d :\,
 \mid x -y \mid <r \}.$
 
 {\bf Assumption (A) : (Uniform ellipticity and boundedness) }{\it
    For all $i,j \in \{1,2, \ldots d\}$ the functions $a_{ij}$ are bounded and have
    continuous bounded partial derivatives. Furthermore, there exists $\lambda >0$ such that for every $\xi,$ $x$ in $\mathbb{R}^d$
\begin{equation*}
\lambda\mid \xi \mid^2 \leq \sum_{i=1}^d\sum_{j=1}^d a_{ij}(x) \xi_i
\xi_j\leq \frac{1}{\lambda}\mid \xi \mid^2  
\end{equation*}}

{\bf Assumption (J) :}{\it The function $J(\cdot,\cdot)$ is a non-negative, symmetric, measurable function such that there exist 
  $\kappa>0$ and $\alpha\in (0,2)$ such that:
\begin{equation}\label{dl2}
 \frac{\kappa}{\mid x-y \mid^{d+\alpha}}\leq J(x,y)\leq \frac{\kappa^{-1}}{\mid x-y \mid^{d+\alpha}}
\end{equation}
 for $x,y\in \mathbb{R}^d,$ $x\neq y.$}

We shall work with the conditional gauge to prove the main result.  To
ensure that the conditional gauge is bounded (Proposition
\ref{condgauge}) we shall follow \cite{chensong} and assume the
following about our function $q$.

{\bf Assumption (Q) (Kato class):}  $q: \mathbb{R}^d\rightarrow\mathbb{R}$ is measurable, and satisfies:
\begin{equation} \label{kX}
\lim_{r \downarrow 0} \sup_{x \in \mathbb{R}^d} \int_{\mid y -x \mid < r} dy \frac{\mid q(y)\mid}{\mid x-y\mid^{d-2}  } =0
\end{equation} 
We are now ready to state our main result.
\begin{theorem}\label{Harnack}
  Let $R\in(0, \frac{1}{2}],$ $x_0 \in \mathbb{R}^d$. Let $\mathcal L$ be the
    nonlocal operator defined in (\ref{L}) and $q: \mathbb{R}^d \rightarrow
    \mathbb{R}$. Assume (A), (J), and (Q). Suppose that $u\in C^2_b(\mathbb{R}^d)$ is
    strictly positive and satisfies
    \begin{equation*}
\mathcal Lu(x) + q(x)u(x) = 0
\end{equation*} for $x \in B(x_0,R)$. Then, there exists a positive constant $c_1 \equiv c_1(R, q, \kappa, d, \alpha,\lambda)$ such that
    \begin{equation} \label{HI} u(x) \leq c_1 u(y), \end{equation}
for all $x, y \in B(x_0, R/2).$
\end{theorem}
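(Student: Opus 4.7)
The plan is to reduce Theorem~\ref{Harnack} to the elliptic Harnack inequality for $\mathcal{L}$-harmonic functions (Proposition~\ref{hq0}) via a conditional gauge argument. Let $B = B(x_0, R)$, let $(X_t)$ denote the strong Markov process generated by $\mathcal{L}$, and write $\tau_B := \inf\{t > 0 : X_t \notin B\}$ and $e_q(t) := \int_0^t q(X_s)\,ds$. Applying It\^{o}'s formula to $u \in C^2_b(\R^d)$ combined with the pointwise identity $\mathcal{L}u + qu = 0$ on $B$ yields the Feynman-Kac representation
\begin{equation*}
u(x) \;=\; \mathbb{E}^x\!\left[\exp\bigl(e_q(\tau_B)\bigr)\,u(X_{\tau_B})\right], \qquad x \in B,
\end{equation*}
where Assumption~(Q) guarantees that $\exp(e_q(\tau_B))$ is $\mathbb{E}^x$-integrable uniformly in $x \in B$. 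Set $v(x) := \mathbb{E}^x[u(X_{\tau_B})]$; because $u \ge 0$ is bounded on $\R^d$, the function $v$ is bounded and $\mathcal{L}$-harmonic on $B$.

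Conditioning on the value of $X_{\tau_B}$ rewrites both functions as integrals against the $\mathbb{P}^x$-exit distribution:
\begin{equation*}
u(x) \;=\; \int u(z)\,\Phi(x,z)\,\mathbb{P}^x(X_{\tau_B}\in dz), \qquad v(x) \;=\; \int u(z)\,\mathbb{P}^x(X_{\tau_B}\in dz),
\end{equation*}
where $\Phi(x,z) := \mathbb{E}^x_z[\exp(e_q(\tau_B))]$ is the \emph{conditional gauge}, namely the expectation taken under the Doob $h$-transform conditioning the process to exit $B$ at $z$. Proposition~\ref{condgauge} supplies positive constants $c_2, c_3$, depending on $R$, the structure constants, and $q$, with $c_2 \le \Phi(x,z) \le c_3$ uniformly in admissible $x, z$. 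Integrating this two-sided bound against the nonnegative measure $u(z)\,\mathbb{P}^x(X_{\tau_B}\in dz)$ gives
\begin{equation*}
c_2\, v(x) \;\le\; u(x) \;\le\; c_3\, v(x), \qquad x \in B.
\end{equation*}

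Applying Proposition~\ref{hq0} to $v$ on $B(x_0, R/2) \subset B$ produces a constant $c_4$ with $v(x) \le c_4\, v(y)$ for all $x, y \in B(x_0, R/2)$. Chaining the three inequalities,
\begin{equation*}
u(x) \;\le\; c_3\, v(x) \;\le\; c_3 c_4\, v(y) \;\le\; \frac{c_3 c_4}{c_2}\, u(y),
\end{equation*}
which is \eqref{HI} with $c_1 = c_3 c_4 / c_2$.

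The main obstacle lies not in this short reduction but in the estimate that powers it, namely the two-sided boundedness of the conditional gauge in Proposition~\ref{condgauge}. That rests on the 3G inequality (Proposition~\ref{3G}), the Uniform Boundary Harnack Principle (Theorem~\ref{BdryHarnack}), and the Carleson estimate (Theorem~\ref{Carleson}), which constitute the genuinely new technical content of the paper. A subsidiary point is justifying the Feynman-Kac identity on the open set $B$ given only pointwise validity of $\mathcal{L}u + qu = 0$ there; this is standard and is handled by truncating $B$ by an increasing sequence of compact subsets, optional stopping, and dominated convergence, where the Kato integrability supplied by Assumption~(Q) controls the passage to the limit.
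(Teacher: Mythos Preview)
Your reduction follows the same overall strategy as the paper, but there are two genuine gaps.

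First, you assert that Assumption~(Q) alone ``guarantees that $\exp(e_q(\tau_B))$ is $\mathbb{E}^x$-integrable uniformly in $x \in B$.'' This is false in general: (Q) via Khasminskii's lemma only gaugeabilizes sufficiently small balls, not the full ball $B$. Gaugeability of $(B,q)$ is precisely Proposition~\ref{ballgauge}, and its proof uses in an essential way the hypothesis that the given solution $u$ is strictly positive and bounded (one bootstraps from small balls to $B$ via Proposition~\ref{unionG}, bounding the gauge by $\sup u/\inf u$). Without this step Proposition~\ref{condgauge} is useless, since its conclusion is a dichotomy: either $F\equiv\infty$ or the two-sided bound holds. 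Gaugeability is exactly what rules out the first alternative.

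Second, and more substantively, you condition on $X_{\tau_B}$ and claim that $\Phi(x,z)=\mathbb{E}^x_z[\exp(e_q(\tau_B))]$ for $z\in B^c$ is the object bounded in Proposition~\ref{condgauge}. It is not. The conditional gauge $F(x,y)$ in Definition~\ref{cgauge} is defined for $y\in\bar B$ via the Doob $h$-transform with $h=G_B(\cdot,y)$; it conditions on $X_{\tau_B-}$, the position immediately \emph{before} exit, which lies in $\bar B$. When the process exits by a jump, $X_{\tau_B}$ lands in $B^c\setminus\partial B$, and conditioning on this exit position is a different operation for which Proposition~\ref{condgauge} says nothing directly. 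The paper bridges this gap with Proposition~\ref{conditional}, which establishes
\[
\mathbb{E}^x\bigl(\phi\,;\,X_{\tau_B}\in A\bigr)=\mathbb{E}^x\bigl(\mathbb{E}^x_{X_{\tau_B-}}(\phi)\,;\,X_{\tau_B}\in A\bigr),
\]
so that the bounds on $F(x,X_{\tau_B-})$ can be inserted inside the outer expectation. The proof of this identity is nontrivial: it splits according to whether the exit is continuous or by jump, invokes the Martin kernel and the density of harmonic measure (Theorem~\ref{density}) for the boundary piece, and the L\'evy system formula for the jump piece. You have omitted this entirely, and your $\Phi(x,z)$ is not well-defined as written.
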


\begin{remark} Despite the operator in (\ref{L}) being  inhomogeneous in space,  the constant $c_1 \equiv c_1(R, q, \kappa, d, \alpha,\lambda)$ is independent of
 $x_0$ (See Remark \ref{r:constant} after proof of Theorem \ref{Harnack}).  By the standard chain of balls  argument, it is easy to see  that the Harnack inequality also holds in
 any ball $B(x_0, R)$ for all $R >0$ with the appropriate constant
 $c_1$ depending on $ q$ and $R.$
\end{remark}

\begin{remark} We did not consider the Schr\"odinger operator
associated to ${\mathcal B}$ as in (\ref{foonla}). It is perhaps
possible to state a version of Theorem \ref{Harnack} for ${\mathcal
  B}$, by assuming that $q$ satisfies an abstract condition involving
the Green function along with the additional assumptions in
\cite{foon1}. We wanted a verifiable condition like the one mentioned in
Assumption (A) so we restricted our attention to ${\mathcal L}.$
 \end{remark}

Fix $0 < R < \frac{1}{2}$, $x_0 \in \mathbb{R}^d$ and let $B$ denote $B(x_0,R)$ for the rest of the paper. 

\subsection{Proof of Theorem \ref{Harnack}} \label{pfHarnack}

$\mbox{\hspace{1in}}$
\medskip

For the case $J \equiv 0,$ Theorem \ref{Harnack} was proved in \cite{cfz} (Theorem 5.1). We will follow the ideas in \cite{cfz} for proving Theorem \ref{Harnack}.  Below we state several propositions whose proof we shall
provide in subsequent sections. Assuming these we shall first prove Theorem \ref{Harnack}.

Under assumptions (A) and (J) there is a symmetric strong Markov
process, $(\PP^x,X_t)$, with c\`adl\`ag paths
associated with $\mathcal L$ (See \cite{CK2}).  We will denote the
Green function of the process by $G$, i.e. a Borel function $G(x,y)$
on $\mathbb{R}^d\times \mathbb{R}^d$ such that
\begin{equation}
\label{green}
\EE^x \left ( \int_0^\infty f(X_s) ds \right)  = \int_{\mathbb{R}^d}f(y) G(x,y) dy
\end{equation}
for all bounded measurable $f: \mathbb{R}^d \rightarrow \mathbb{R}.$
\begin{notation}
For any Borel set $A$,
 $$T_A = \inf \{ t \geq 0 : X_t \in A\} \mbox{ and } \tau_A = \inf \{ t \geq 0 : X_t \not \in A\}$$ denote the  hitting time and  exit
time of set $A$.
\end{notation}

\vspace{0.1in}

\noindent It is standard to note that if $u\in C^2_b(\mathbb{R}^d)$ and satisfies
${\mathcal L}u =0$ then via Ito's formula $u(X_t)$ is a martingale.
\begin{definition} 
Let $D$ be a bounded domain in $\mathbb{R}^d.$ We say that a
measurable function $h :\mathbb{R}^d \rightarrow \mathbb{R}$ is $\mathcal L-$harmonic
in $D$ if for every open set $U$ such that $U\subset\bar U\subset D,$
\begin{equation}\label{harmonicdefn}
h(x) = \mathbb{E}^x(h(X_{\tau_U})),
\end{equation}
for $x\in U.$ We say that $h$ is regular harmonic in $D$ if it is
harmonic in $D$ and in addition, the relation \eqref{harmonicdefn}
holds with $D$ replacing $U.$
\end{definition}
For the rest of the paper we write $\mathcal Lu = 0,$ to mean that
$u$ is $\mathcal L$-harmonic. In \cite{CK2}, a parabolic Harnack
inequality was proven for solutions to ${\mathcal L} u = 0.$ This will
imply the  elliptic Harnack inequality (alternative references are \cite{foon1}, \cite{foon2}). We state this next.
\begin{proposition} \label{hq0}
Assume (A) and (J). Let $x_0\in \mathbb{R}^d$ and $R\in(0,1/2]$. Suppose $v$
  is nonnegative and bounded on $\mathbb{R}^d$ and ${\mathcal L} v = 0$ in
  $B.$ Then there exists a positive constant $c_1 = c_1(R,q, \kappa, d, \alpha, \lambda)$ (in particular independent of $x_0$ and $v$) such that
  \begin{equation}
 v(x)\leq c_1 v(y),
\end{equation}
whenever $x,y \in B(x_0, \frac{R}{2})$
\end{proposition}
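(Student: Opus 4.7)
The plan is to derive the elliptic Harnack inequality directly from the parabolic Harnack inequality (PHI) established in \cite{CK2} for the semigroup generated by $\mathcal{L}$. The key observation is that if $v$ is $\mathcal{L}$-harmonic in $B = B(x_0,R)$ and is nonnegative and bounded on $\R^d$, then the time-independent function $u(t,x) := v(x)$ is a nonnegative caloric function for $\mathcal{L}$ on every space-time cylinder whose spatial projection is contained in $B$; indeed $\partial_t u - \mathcal{L} u = -\mathcal{L} v = 0$ in any such cylinder.

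Applying PHI on a cylinder of the form $(0, T) \times B(x_0, R)$, one obtains a constant $C$, depending only on $R$ and the constants in assumptions (A) and (J), together with times $0 < t_1 < t_2 < t_3 < T$ such that
\begin{equation*}
\sup_{(t_1, t_2) \times B(x_0, R/2)} u \;\leq\; C \inf_{(t_3, T) \times B(x_0, R/2)} u.
\end{equation*}
Since $u$ is independent of $t$, the left-hand side equals $\sup_{B(x_0,R/2)} v$ and the right-hand side equals $C \inf_{B(x_0,R/2)} v$, which is exactly the elliptic bound asserted in the proposition.

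The only delicate point is the choice of time scales in the cylinder. The operator $\mathcal{L}$ is not scale-invariant — the divergence part scales like $r^2$ while the jump part scales like $r^\alpha$ with $\alpha \in (0,2)$ — so the PHI in \cite{CK2} is formulated with a mixed space-time scaling, and its constant degrades with $R$. One must check that the sub-cylinders can be chosen compatibly with this mixed scaling for the given $R \in (0,1]$, which follows directly from the explicit form of PHI in \cite{CK2}. As an alternative route, one may argue by Krylov--Safonov-type chaining: use a small-scale elliptic Harnack inequality (as provided in \cite{foon1, foon2}) on finitely many balls covering $B(x_0, R/2)$, with the number of balls depending on $R$, and absorb the resulting product of constants into the final $c_1$.
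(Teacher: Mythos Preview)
Your proposal is correct and matches the paper's treatment: the paper does not give its own proof of this proposition but simply cites \cite{CK2} for the parabolic Harnack inequality and remarks that it implies the elliptic one (with \cite{foon1, foon2} as alternative references). Your write-up spells out exactly this implication --- that a time-independent $\mathcal{L}$-harmonic function is caloric, so PHI on an appropriate cylinder yields the elliptic bound --- and even mentions the same chaining alternative via \cite{foon1, foon2}.
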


In what follows below we always assume that $q$ satisfies assumption $(Q).$  To work with probabilistic solutions for the Schr\"odinger equation
 we will need to show that the ball is gaugeable, whose definition we now give.  For $ t \geq 0,$ let $$e_q(t) =  \exp\left(\int_{0}^{t}q(X_s)ds\right).$$ 
\begin{definition} Let $U$ be an open subset of the ball $B$.
The function $H: U \rightarrow \mathbb{R} \cup \{ \infty \}$ given by
\begin{equation}H(x) =
\mathbb{E}^x\left(e_q(\tau_U)\right)
\end{equation}
 is called the gauge for $(U, q).$ If the gauge $H$ is bounded in $U$ we will call $(U, q)$ gaugeable.
\end{definition}
We now state the Feynman-Kac representation for
solution $u\in C^2_b(\mathbb{R}^d)$ of ${\mathcal L}u + qu =0$ provided the ball is gaugeable.
 \begin{proposition}\label{fkac}
Let $q$ satisfy assumption (Q) and suppose $(B, q)$ is gaugeable. If
$u\in C^2_b(\mathbb{R}^d)$ solves $\mathcal Lu + qu = 0$ in $B,$ then for all
$x \in B$
\begin{equation}\label{FC}
u(x) = \mathbb{E}^x\left (e_q(\tau_B)u(X_{\tau_{B}})\right)
\end{equation}
\end{proposition}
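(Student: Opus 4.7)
The plan is to apply Ito's formula to the process $e_q(t)u(X_t)$, observe the cancellation coming from $\mathcal L u + qu = 0$, and then use gaugeability to justify an optional stopping argument at $\tau_B$.

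First I would note that since $u\in C^2_b(\R^d)$, the Ito-Meyer formula for the jump-diffusion $X$ (with generator $\mathcal L$) gives
\begin{equation*}
u(X_t) - u(X_0) = M_t + \int_0^t \mathcal L u(X_s)\,ds,
\end{equation*}
where $M_t$ is a local martingale under $\PP^x$. Applying the product rule to $Y_t := e_q(t)u(X_t)$ and using $de_q(t) = e_q(t)q(X_t)\,dt$, the bounded variation parts read
\begin{equation*}
e_q(t)\bigl(\mathcal L u(X_t) + q(X_t)u(X_t)\bigr)\,dt,
\end{equation*}
which vanishes on $\{t<\tau_B\}$ by the hypothesis $\mathcal L u + qu = 0$ in $B$. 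Consequently the stopped process $N_t := Y_{t\wedge\tau_B} = e_q(t\wedge\tau_B)u(X_{t\wedge\tau_B})$ is a local $\PP^x$-martingale.

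Next I would upgrade this local martingale to a genuine one in the limit. Let $(\sigma_n)$ be a localizing sequence; then $\EE^x[N_{t\wedge\sigma_n}]=u(x)$ for every $n,t$. Because $u$ is bounded,
\begin{equation*}
|N_{t\wedge\sigma_n}| \leq \|u\|_\infty\, e_{|q|}(\tau_B),
\end{equation*}
so if $\EE^x[e_{|q|}(\tau_B)]<\infty$ we may pass $n\to\infty$ and $t\to\infty$ by dominated convergence, using that $\tau_B<\infty$ a.s.\ (a consequence of boundedness of the gauge, or equivalently of $B$ being a bounded set for a non-degenerate process). This would yield the desired identity $u(x)=\EE^x[e_q(\tau_B)u(X_{\tau_B})]$.

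The one real obstacle is precisely the uniform domination step: gaugeability provides $\EE^x[e_q(\tau_B)]<\infty$, but for the dominated-convergence argument one wants an $L^1$ bound on $\sup_{t\le\tau_B} e_{|q|}(t)$. This is the place where Assumption (Q) (Kato class) does the work: Khasminskii's lemma, together with the standard bound $\EE^x[\int_0^{\tau_B}|q|(X_s)\,ds]\le C$ coming from the Green-function estimate implicit in (Q), gives $\EE^x[e_{|q|}(\tau_B)]<\infty$ uniformly in $x\in B$, hence the domination. I would either cite this Khasminskii-type estimate in the present setting (as is done in \cite{chensong}) or, if needed, include a short verification that the Kato condition (\ref{kX}) plus the Green-function bounds for $\mathcal L$ deliver it. Once this uniform integrability is in hand, the argument above concludes the proof of \eqref{FC}.
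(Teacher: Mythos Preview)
Your approach is essentially the paper's: apply It\^o's product formula to $e_q(t)u(X_t)$, use $\mathcal Lu+qu=0$ to kill the drift, and pass to the limit $t\to\infty$ by dominated convergence. The paper is in fact terser than you at the domination step---it simply invokes gaugeability of $(B,q)$ without isolating the $e_{|q|}$ issue you flag---so your added care (localizing sequence, Khasminskii) only makes the argument more honest, not different.
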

We  provide a sufficient condition for the ball $B$ to be gaugeable
\begin{proposition}\label{ballgauge}
Let $q$ satisfy assumption $(Q).$ Let $u$ be a bounded solution of $\mathcal{L}u + qu = 0$ in
$B,$ with $\inf_{B}u > 0.$ Then $(B, q)$ is gaugeable.
\end{proposition}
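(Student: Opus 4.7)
The plan is to derive gaugeability from the existence of $u$ by producing, via It\^o's formula, a non-negative supermartingale whose expected value at $\tau_B$ bounds the gauge.

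First, since $u \in C^2_b(\R^d)$ solves $\mathcal L u + qu = 0$ on $B$, a direct application of the It\^o formula for jump-diffusions to $e_q(t)u(X_t)$ gives that
\[
M_t := e_q(t \wedge \tau_B)\, u(X_{t \wedge \tau_B})
\]
is a non-negative local martingale under each $\PP^x$ with $x \in B$, hence a supermartingale. To upgrade the local martingale property to a usable identity, I would localize by $T_n := \inf\{t \ge 0 : \int_0^t |q(X_s)|\,ds \ge n\}$ combined with the shrinking balls $B_n := \{z \in B : \mathrm{dist}(z,\partial B) > 1/n\}$, setting $\sigma_n := \tau_{B_n}\wedge T_n$. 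Assumption (Q), together with the Green function estimate for $\mathcal L$ on $B$ (whose $|x-y|^{2-d}$ local singularity is inherited from the uniformly elliptic diffusion part via Aronson-type bounds), yields $\EE^x\bigl(\int_0^{\tau_B}|q(X_s)|\,ds\bigr) < \infty$, so $T_n\uparrow\infty$ a.s.\ and $\sigma_n\uparrow\tau_B$ a.s. Since $M_{\cdot\wedge\sigma_n}$ is bounded by $e^n\|u\|_\infty$, optional stopping yields $u(x) = \EE^x\bigl(e_q(\sigma_n)\,u(X_{\sigma_n})\bigr)$.

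Second, on the event $\{\sigma_n < \tau_B\}$ we have $X_{\sigma_n} \in B$, so $u(X_{\sigma_n}) \ge m := \inf_B u > 0$; the contribution on $\{\sigma_n = \tau_B\}$ is non-negative. Dropping the latter,
\[
m\,\EE^x\bigl(e_q(\sigma_n);\,\sigma_n < \tau_B\bigr) \le \EE^x\bigl(M_{\sigma_n}\bigr) = u(x) \le \|u\|_\infty.
\]
Letting $n\to\infty$ and applying Fatou, using $\sigma_n\uparrow\tau_B$ and the continuity of $e_q$ in $t$, gives a uniform upper bound on $\EE^x(e_q(\tau_B);\, X_{\tau_B-} \in \partial B)$. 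This captures all exit scenarios except genuine long jumps from the interior of $B$ directly to $B^c$.

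The main obstacle, and where the argument for the jump-diffusion case departs from the classical Chung--Zhao setting, is the long-jump contribution $\EE^x(e_q(\tau_B);\, X_{\tau_B-}\in B,\, X_{\tau_B} \in B^c)$: the landing point $X_{\tau_B}$ can be arbitrarily far from $B$ where $u$ provides no uniform positive lower bound. To control this I would exploit Assumption (J): by (\ref{dl2}) the total long-jump intensity $\sup_{x \in B}\int_{B^c} J(x,y)\,dy$ is finite, so long-jump exits form a point process with uniformly bounded rate; combined with the exponential moments of the continuous exit time afforded by Assumption (A), and a Khasminskii-type estimate for $e_q$ along the path up to the exit, valid because the Kato norm of $q$ on $B$ is finite by (Q), one obtains a uniform bound on this second piece as well. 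Summing the two regimes yields $\sup_{x \in B} H(x) < \infty$, that is, $(B,q)$ is gaugeable.
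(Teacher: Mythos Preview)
Your approach is genuinely different from the paper's, and the final paragraph contains a real gap.

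The paper never splits the exit from $B$ into continuous-exit and jump-exit regimes. Instead it proceeds by exhaustion: first, via Khasminskii's lemma and assumption~(Q), every sufficiently small ball $C\subset B$ is gaugeable; second, a ``union lemma'' is proved (if $(C_1,q)$ and $(C_2,q)$ are gaugeable with $C_1\cap C_2\neq\varnothing$ and a bounded solution $u$ with $\inf u>0$ exists on $C_1\cup C_2$, then $(C_1\cup C_2,q)$ is gaugeable) using an alternating sequence of exit times $T_m$ from $C_1$ and $C_2$ together with the Feynman--Kac identity on each $C_i$. With these two ingredients, any $D_n\Subset B$ written as a finite union of small balls is gaugeable; the Feynman--Kac identity $u(x)=\EE^x\bigl(e_q(\tau_{D_n})u(X_{\tau_{D_n}})\bigr)$ then holds rigorously on $D_n$, yields a bound $\EE^x(e_q(\tau_{D_n}))\le \sup u/\inf u$, and Fatou finishes as $D_n\uparrow B$. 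The point is that gaugeability of the intermediate domains $D_n$ is what legitimizes the Feynman--Kac identity there; you are trying to obtain that identity on $B$ directly, which forces the problematic case split.

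Your proposed treatment of the jump-exit contribution is where the argument breaks. The ``Khasminskii-type estimate for $e_q$ along the path up to the exit, valid because the Kato norm of $q$ on $B$ is finite by~(Q)'' is precisely what is not available: Khasminskii's lemma requires $\sup_x\EE^x\int_0^{\tau_B}|q(X_s)|\,ds<1$, and (Q) only guarantees this for \emph{small} balls, not for $B$ itself. Finiteness of the long-jump intensity and exponential moments of $\tau_B$ give no control on $\EE^x(e_q(\tau_B))$ when $q$ is large and positive on a set of positive measure inside $B$. In addition, the Fatou step that is supposed to isolate $\{X_{\tau_B-}\in\partial B\}$ is not justified: $1_{\{\sigma_n<\tau_B\}}$ need not converge to $1_{\{X_{\tau_B-}\in\partial B\}}$, since the process may exit $B_n$ into the annulus $B\setminus B_n$ and only later jump out of $B$ from well inside. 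The paper's exhaustion argument sidesteps all of this by never working on $B$ until the very last Fatou step.
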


The next ingredient required in the proof will be to condition on the exit measure and to employ the conditional gauge theory from the literature. For $x,y \in B$, let  $\PP^x_y$ 
and $\mathbb{E}^{x}_{y}$ denote the probability and expectation for
the Doob's $h$-transformed process of $X$ with $h(\cdot) = G_B(\cdot,y),$ where $G_B$ denotes the Green function of the process $X_t$ killed on exiting $B.$ More precisely, $G_B$ is defined by
\begin{equation}
\label{green2}
\mathbb{E}^x \left (\int_{0}^{\tau_B} f(X_s) ds \right)  = \int_{B}f(y) G_B(x,y) dy
\end{equation}
for all bounded measurable $f: \mathbb{R}^d \rightarrow \mathbb{R}.$

\begin{proposition}\label{conditional}
Let $\phi$ be a non-negative $\mathcal{F}_{\tau_B-}$ measurable function and $A$ be a Borel subset of $B^c.$ Then,
$$\mathbb{E}^x(\phi; X_{\tau_B}\in A) = \mathbb{E}^x(\mathbb{E}^x_{X_{\tau_B-}}(\phi); X_{\tau_B}\in A)$$
\end{proposition}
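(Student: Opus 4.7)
The plan is to read $\mathbb{E}^x_y(\phi)$ as a version of the regular conditional expectation $\mathbb{E}^x[\phi\mid X_{\tau_B-}=y]$, whereupon the proposition asserts
$$\mathbb{E}^x\bigl[\phi\,\mathbf 1_{\{X_{\tau_B}\in A\}}\bigr] = \mathbb{E}^x\bigl[\mathbb{E}^x[\phi\mid X_{\tau_B-}]\,\mathbf 1_{\{X_{\tau_B}\in A\}}\bigr].$$
Since the tower property at $\sigma(X_{\tau_B-})$ already gives $\mathbb{E}^x[\phi\,\mathbf 1_{\{X_{\tau_B}\in A\}}]=\mathbb{E}^x\bigl[\mathbb{E}^x[\phi\,\mathbf 1_{\{X_{\tau_B}\in A\}}\mid X_{\tau_B-}]\bigr]$, the entire content of the claim reduces to the conditional independence of $\phi$ and $\mathbf 1_{\{X_{\tau_B}\in A\}}$ given $X_{\tau_B-}$. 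Establishing this conditional independence is the only real task.

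To prove it, I would split $\{X_{\tau_B}\in A\}$ into the continuous-exit component, on which $X_{\tau_B}=X_{\tau_B-}$ so that $\mathbf 1_{\{X_{\tau_B}\in A\}}$ is already $\sigma(X_{\tau_B-})$-measurable and the factorization is automatic, and the jump-exit component, on which $X_{\tau_B}$ is produced by a single jump from $X_{\tau_B-}\in\bar B$. For the jump case, the L\'evy system of the process $(\mathbb{P}^x,X_t)$ constructed in \cite{CK2} gives, for every non-negative predictable process $\psi_s$ and every non-negative measurable $f$ vanishing on the diagonal,
$$\mathbb{E}^x\Bigl[\sum_{s\leq\tau_B}\psi_s\,f(X_{s-},X_s)\Bigr]=\mathbb{E}^x\Bigl[\int_0^{\tau_B}\psi_s\int_{\R^d}f(X_s,z)\,J(X_s,z)\,dz\,ds\Bigr].$$
Taking $f(y,z)=\mathbf 1_B(y)\mathbf 1_A(z)$ picks out exactly the exit jump (since the process lies in $B$ before $\tau_B$, any jump into $B^c$ from $B$ must be the exit jump). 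With $\psi_s$ a predictable version of $\phi$, the resulting right-hand side integrates $\phi$ against a kernel that depends on $X_s$ only through $X_{s-}$, and at the unique exit time $s=\tau_B$ depends only on $X_{\tau_B-}$; this is precisely the conditional factorization required.

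The main technical obstacle is aligning the $\mathcal{F}_{\tau_B-}$-measurable random variable $\phi$ with a predictable process $\psi_s$ so that the L\'evy system formula applies. I would handle this by first proving the identity for cylindrical $\phi$ of the form $\phi=g(X_{t_1\wedge\tau_B},\ldots,X_{t_n\wedge\tau_B})$ with deterministic $t_i$ and bounded continuous $g$, where the natural predictable modification is transparent, and then extending to general non-negative $\mathcal{F}_{\tau_B-}$-measurable $\phi$ by a monotone-class argument. The continuous-exit component can be dealt with separately using the ordinary strong Markov property applied at $\tau_B$ on $\{X_{\tau_B-}=X_{\tau_B}\}$, where the conditioning variable coincides with the post-exit state and the factorization is trivial.
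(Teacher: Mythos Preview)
Your plan rests on reading $\mathbb{E}^x_y(\phi)$ as a version of the regular conditional expectation $\mathbb{E}^x[\phi\mid X_{\tau_B-}=y]$, but in this paper $\mathbb{E}^x_y$ is \emph{defined} as the expectation under the Doob $h$-transform with $h(\cdot)=G_B(\cdot,y)$ (for $y\in B$) and its Martin-kernel extension $h(\cdot)=M(\cdot,y)$ (for $y\in\partial B$). The proposition is therefore not the tautology you reduce it to; its content is exactly that this specific $h$-process expectation acts like the conditional expectation in the displayed identity. Your ``automatic factorization'' on the continuous-exit event would be fine if the identification with conditional expectation were already in hand, but that identification is what must be established.

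Concretely, on $\{X_{\tau_B-}=X_{\tau_B}\in\partial B\}$ the paper unwinds the $h$-transform formula $\mathbb{E}^x_z(\phi)\,M(x,z)=\mathbb{E}^x\bigl(1_{\{t<\tau_B\}}\phi_t\,M(X_t,z)\bigr)$ and then invokes Theorem~\ref{density}, which says the Martin kernel is the density of harmonic measure, $\omega(x,dz)=M(x,z)\,\omega(x_0,dz)$. That theorem in turn requires the Carleson estimate and the uniform Boundary Harnack Principle developed in Sections~\ref{potl}--\ref{MBDH}. Your treatment of the continuous-exit piece skips all of this and so has a genuine gap. On the jump-exit event your use of the L\'evy system matches the paper's approach, but note that the paper carries it out by writing $\mathbb{E}^x_z(\phi)\,G_B(x,z)=\mathbb{E}^x\bigl(1_{\{t<\tau_B\}}\phi_t\,G_B(X_t,z)\bigr)$ and pairing this directly with the joint density $G_B(x,z)J(z,y)\,dz\,dy$; the link to the $h$-transform definition through $G_B$ is explicit, not a byproduct of an abstract conditional-independence statement. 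Your monotone-class reduction to $\phi=1_{\{t<\tau_B\}}\phi_t$ is the same as the paper's.
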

When ${\mathcal L } = \Delta$, \cite{CZ} contains a proof of the above
and it explicitly uses the density of the harmonic measure of the
process. When ${\mathcal L } = \Delta^\alpha$, \cite{bb} contains a
proof of the above by using the joint density of $(X_{\tau_B-},
X_{\tau_B})$. We had to combine both these aspects: when the process
exits the ball via the boundary of the ball  we show existence of
a Martin Kernel and prove a density for the Harmonic measure in
Theorem \ref{density}; and when the process exits the ball into the
complement via jump we use the Levy system formula (see (1.4) in \cite{cskv}), thus establishing Proposition \ref{conditional}.

Another key step required is to verify the boundedness of the conditional gauge.
\begin{definition} \label{cgauge}For any $x \in B, y \in \bar{B}, x \neq y$, the conditional
  gauge is defined to be
\begin{equation}
F(x,y) = \mathbb{E}^{x}_{y}\left(e_q(\tau_{B\setminus \{y\}})\right). 
\end{equation}
\end{definition}
We shall establish the following result about the conditional gauge.
\begin{proposition}\label{condgauge}
Let $q$ satisfy assumption (Q). Then, either $F\equiv\infty,$ or there exist positive constants
$c_1$ and $c_2$ such that
\begin{equation}\label{bounds}
c_1 \leq F(x,y)\leq c_2, \quad x\in B, y\in \bar{B}, x \neq y.\end{equation}
 \end{proposition}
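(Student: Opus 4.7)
The plan is to follow the standard conditional gauge theorem strategy (as in \cite{chensong} and \cite{bb}), with the 3G inequality (Proposition \ref{3G}) as the central analytic input. Let $G_B$ denote the Green function of $X$ killed on exiting $B$. For $y \in \bar B$ fixed, the Doob $h$-transformed process with $h(\cdot) = G_B(\cdot, y)$ has Green function
\begin{equation*}
G_B^y(x,z) \;=\; \frac{G_B(x,z)\, G_B(z,y)}{G_B(x,y)},
\end{equation*}
so that for any nonnegative measurable $f$, $\mathbb{E}^x_y\!\left(\int_0^{\tau_{B\setminus\{y\}}} f(X_s)\, ds\right) = \int_B G_B^y(x,z)\, f(z)\, dz$. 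The first step is to bound this $h$-potential uniformly when $f = |q|$.

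The key estimate is obtained by applying the 3G inequality, which yields $G_B^y(x,z) \leq c\,[G_B(x,z) + G_B(z,y)]$, and then using standard upper estimates on $G_B$ of the form $G_B(x,z) \leq c'\,|x-z|^{-(d-2)}$. Combined with Assumption (Q), this will give
\begin{equation*}
\lim_{r \downarrow 0}\, \sup_{x \in B,\, y \in \bar B} \int_{B \cap \{|z-x|<r\} \cup \{|z-y|<r\}} G_B^y(x,z)\,|q(z)|\, dz \;=\; 0,
\end{equation*}
and in particular $\sup_{x,y} \int_B G_B^y(x,z)\,|q(z)|\, dz < \infty$. Having secured this, Khasminskii's lemma applied to the $h$-conditioned process shows that on any set $U \subset B$ with sufficiently small Kato-norm of $q$, the conditional gauge $\mathbb{E}^x_y(e_{|q|}(\tau_{U\setminus\{y\}}))$ is bounded by $2$. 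A partition of $B$ into finitely many such small pieces, together with the strong Markov property (following the iteration scheme in \cite{chensong}), then upgrades this to a uniform upper bound on $F$ provided $F$ is not identically $\infty$. The dichotomy itself will follow from a standard argument: if $F(x_0,y_0)<\infty$ at a single pair, then applying the Harnack-type comparison for the conditioned process propagates finiteness to all $(x,y)$, after which the Khasminskii iteration produces a uniform upper bound.

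The lower bound is cleaner: by Jensen's inequality applied to the exponential,
\begin{equation*}
F(x,y) \;\geq\; \exp\!\left(\mathbb{E}^x_y\!\left(\int_0^{\tau_{B\setminus\{y\}}} q(X_s)\, ds\right)\right) \;\geq\; \exp\!\left(-\sup_{x,y}\int_B G_B^y(x,z)\,|q(z)|\,dz\right),
\end{equation*}
and the right-hand side is a positive constant by the estimate from the first step.

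The main obstacle is the 3G estimate at the boundary case $y \in \partial B$, where the conditioned process is pushed toward $y$ and the standard interior 3G is insufficient. Here one must use the full uniform boundary version (Theorem \ref{BdryHarnack}) together with the density/Martin kernel construction (Theorem \ref{density}) mentioned in the discussion after Proposition \ref{conditional}, so that $G_B(x,z)G_B(z,y)/G_B(x,y)$ is controlled uniformly up to the boundary. Once the 3G inequality and Kato-class control of the $h$-potential are in place, the remainder of the argument is the classical Khasminskii plus dichotomy scheme.
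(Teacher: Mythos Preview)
Your proposal is correct and follows essentially the same route as the paper: the key analytic input is the 3G inequality (Proposition \ref{3G}) combined with the Kato condition (Q), after which the conditional gauge dichotomy follows from the Chen--Song machinery. The only difference is packaging: the paper verifies that (Q) together with the 3G estimate implies the abstract condition (Q1) of \cite{chensong} and then simply invokes Theorem 3.1 there, whereas you have unpacked that theorem by sketching the Khasminskii iteration, the strong Markov partition argument, and the Jensen lower bound explicitly.
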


We now have all the ingredients to prove the main result.

\vspace{0.1in}

\noindent \textbf{Proof of Theorem \ref{Harnack}:} From the hypothesis
and Proposition \ref{ballgauge} we know that $(B, q)$ is
gaugeable. So, $$\mathbb{E}^x(e_q(\tau_B)) < \infty.$$ This implies
that the conditional gauge $F(\cdot, \cdot)$ cannot be identically
infinity. Therefore by Proposition \ref{condgauge}, $F$ satisfies
(\ref{bounds}). From Proposition \ref{fkac} we know that the solution to the
Schrodinger equation $u$ satisfies $$u(x) =
\mathbb{E}^x\left(e_q(\tau_B)u(X_{\tau_B})\right),$$ for all $x \in
B$. For $x,y \in B(x_0, R/2),$ we have
\begin{eqnarray} \label{CFZ1}
u(x)&=& \mathbb{E}^x\left(e_q(\tau_B)u(X_{\tau_B})\right) \nonumber\\
&=& \mathbb{E}^x\left(\mathbb{E}^x_{X_{\tau_B-}}(e_q(\tau_B))u(X_{\tau_B})\right).
\end{eqnarray}
by Proposition \ref{conditional}.  Then using the upper bound for the conditional gauge from (\ref{bounds}), in (\ref{CFZ1}) we have
\begin{eqnarray} \label{CFZ2}
u(x)&\leq & c_2 \mathbb{E}^x\left(u(X_{\tau_B})\right)
\end{eqnarray}
In (\ref{CFZ2}), applying the Harnack inequality for the
$\mathcal L$ harmonic function $v(x) = \mathbb{E}^x\left(u(X_{\tau_B})\right)$, (see Proposition \ref{hq0}) we have
\begin{eqnarray} \label{CFZ3}
u(x)&\leq & c_3\mathbb{E}^y\left(u(X_{\tau_B})\right) \end{eqnarray}
We now reverse the estimate, using the lower bound from (\ref{bounds}) and Proposition \ref{conditional} in (\ref{CFZ3}) we have
\begin{eqnarray}
&\leq & \frac{c_3}{c_4}\mathbb{E}^y\left(\mathbb{E}^y_{X_{\tau_B-}}(e_q(\tau_B))u(X_{\tau_B})\right)\nonumber\\
&=& c_5\mathbb{E}^y\left(e_q(\tau_B)u(X_{\tau_B})\right)\nonumber\\
&=&c_5 u(y).
\end{eqnarray} 
This finishes the proof of the theorem.
\qed

\begin{remark} \label{r:constant} We note that in the proof above the constants in (\ref{CFZ1}), (\ref{CFZ2}), and  (\ref{CFZ3}) do not depend on $x_0$. Consequently the constant  $c_1 \equiv c_1(R, q, \kappa, d, \lambda)$ in the statement of Theorem \ref{Harnack} does not depend $x_0$. 
  \end{remark}

{\bf Layout:} The rest of the paper is organized as follows.  In the
next section, Section \ref{sgauge}, we prove Proposition \ref{fkac} and Proposition
\ref{ballgauge}.  The remainder of the
paper is devoted to proving Proposition \ref{conditional} (proved in
Section \ref{GFE}) and Proposition \ref{condgauge} (proved in Section
\ref{MBDH}). These propositions require three key results from the
potential theory of ${\mathcal L}$. For this, in Section \ref{potl}, we prove a
Carelson estimate (Theorem \ref{Carleson}) and a uniform Boundary
Harnack Principle (Theorem \ref{BdryHarnack}) followed by the 3G
inequality (Proposition \ref{3G}) in Section \ref{GFE}.  Results on
the Martin kernel and the Martin Boundary along with the density of
the Harmonic measure are contained in Section \ref{MBDH}. The results
in these sections are of independent interest as well.

\section{Gauge and Feynman-Kac Representation}  \label{sgauge}

In this section we prove Proposition \ref{fkac} and Proposition
\ref{ballgauge}.  For any Borel set $A,$ let $$T_A = \inf\{t: X_t\in
A\} \mbox{ and } \tau_A = \inf\{t: X_t\in A^c\}$$ be the hitting time
and the exits times from $A$ respectively. From Lemma 2.6 in
\cite{CK2}), we know that for all $x \in B$ there exists $c_1 >0$ such that
\begin{equation}\label{fupperbd}
\mathbb{E}^x(\tau_{B})\leq c_1R^2.
\end{equation}
Hence for any Borel set $A \subset B$, $\tau_A$ is finite almost
surely $\PP^x$ for $x \in A$.
\begin{definition}
The function $H: B \rightarrow \mathbb{R} \cup \{ \infty \}$ given by
\begin{equation}H(x) =
\mathbb{E}^x\left(\exp\left(\int_{0}^{\tau_B}q(X_s)ds\right)\right)
\end{equation}
 is called the gauge for $(B, q).$ If the gauge $H$ is bounded in $B,$ we will call $(B, q)$ 
gaugeable.
\end{definition}
 We shall first prove that if $q$ satisfies $(Q)$ then every sufficiently `small' set is gaugeable. Let $m$ denote the Lebesgue measure on $\mathbb{R}^d$.
\begin{lemma} {(\bf Gaugeable sets)} 
Let $q$ satisfy assumption {(Q)}. Then, there exists $\delta>0$ such
that for every ball $C\subset B,$ with $m(C) <\delta,$ we have $(C,q)$
is gaugeable.
\end{lemma}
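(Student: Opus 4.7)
The plan is to apply Khasminskii's lemma after controlling a single Green potential of $|q|$. Set
$$M(C) := \sup_{x \in C} \EE^x\left[\int_0^{\tau_C} |q(X_s)|\,ds\right].$$
By expanding $\exp\bigl(\int_0^{\tau_C}|q(X_s)|\,ds\bigr)$ as a Taylor series and iterating the strong Markov property at ordered times $s_1 < \cdots < s_n$, one obtains the standard bound $\EE^x\bigl[\tfrac{1}{n!}(\int_0^{\tau_C}|q(X_s)|\,ds)^n\bigr] \leq M(C)^n$. Summing gives $\sup_{x \in C}\EE^x\bigl[\exp(\int_0^{\tau_C}|q(X_s)|\,ds)\bigr] \leq (1-M(C))^{-1}$ whenever $M(C) < 1$. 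Since $e_q(\tau_C) \leq \exp(\int_0^{\tau_C}|q(X_s)|\,ds)$, this bounds the gauge uniformly, so it suffices to find $\delta > 0$ such that $m(C) < \delta$ forces $M(C) \leq 1/2$.

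Next I would rewrite $M(C)$ using the Green function $G_C$ of $X$ killed on exiting $C$:
$$\EE^x\left[\int_0^{\tau_C}|q(X_s)|\,ds\right] = \int_C G_C(x,y)\,|q(y)|\,dy.$$
Using $G_C(x,y) \leq G(x,y)$, where $G$ is the global Green function of $X$, together with the bound $G(x,y) \leq c_1|x-y|^{2-d}$ for $d \geq 3$ (obtained by integrating in time the upper heat-kernel estimate of \cite{CK2} for $\mathcal L$ under (A) and (J)), we get
$$M(C) \leq c_1 \sup_{x \in C}\int_C \frac{|q(y)|}{|x-y|^{d-2}}\,dy.$$

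The last step uses (Q) to absorb the right-hand side. Given $\eps > 0$, the Kato condition supplies $r = r(\eps) > 0$ with $\sup_{z \in \R^d}\int_{|y-z|<r} |q(y)|\,|z-y|^{2-d}\,dy < \eps$. Since $C$ is a ball, $m(C) < \delta$ forces the radius of $C$ to be less than $c_d\,\delta^{1/d}$; choosing $\delta$ so that this radius is below $r/2$, we have $C \subset B(x,r)$ for every $x \in C$, hence $M(C) < c_1 \eps$. Taking $\eps = 1/(2c_1)$ gives $M(C) \leq 1/2$ and hence gaugeability. The principal obstacle I foresee is citing the correct global Green function upper bound for $\mathcal L$; the two-sided heat kernel bounds in \cite{CK2} deliver the required control, with the uniformly elliptic diffusion part supplying the $|x-y|^{2-d}$ singularity near the diagonal that matches the Kato exponent in (\ref{kX}). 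Everything else---Khasminskii's iteration and the covering/scaling step---is routine.
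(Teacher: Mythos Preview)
Your proposal is correct and follows essentially the same approach as the paper: bound $\EE^x\int_0^{\tau_C}|q(X_s)|\,ds$ via the Green function estimate $G_C(x,y)\leq G(x,y)\leq c_1|x-y|^{2-d}$, invoke assumption~(Q) to make this less than $1$ when the ball is small, and then apply Khasminskii's lemma. The paper is terser (it just cites Khasminskii and writes $G_B$ in place of $G_C$, which is harmless since $G_C\leq G_B$), but the structure is identical.
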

\begin{proof}
Let $C$ be a ball with $m(C) < \delta$ and $ x \in C$. Then using the definition of $G_B$ and the upper bound  (\ref{upper}) given by Lemma \ref{gbounds} below, we have
\begin{eqnarray*}
\mathbb{E}^x\left(\int_{0}^{\tau_C}\mid q(X_s)\mid ds\right) &=&
\int_{C}G_B(x,y)\mid q(y)\mid dy\\& \le &
c_1\int_{C}\mid x-y\mid^{2-d}\mid q(y)\mid dy\\ & = & \eta < 1,
\end{eqnarray*}
if $\delta$ is small enough, by assumption $(Q).$ By a standard application of Khasminki's lemma we have
\begin{equation}\label{finiteE}
\sup _{x \in C} \mathbb{E}^x\left(\exp\left(\int_{0}^{\tau_C}\mid q(X_s)\mid ds\right)\right)\leq\frac{1}{1-\eta} < \infty.
\end{equation}
This proves that $(C,q)$ is gaugeable. 
\end{proof}
We next prove a Feynman-Kac representation for solutions $u$ to
${\mathcal L}u + qu = 0$.

\vspace{0.1in}

\noindent \textbf{Proof of Proposition \ref{fkac}:}
For $t>0,$ let $Y_t = \int_{0}^{t}q(X_s)ds,$ $V_t = \exp(Y_t)$ and
$W_t = u(X_t).$ Note that $[V, W]_t = 0$ (since $V_t$ is
continuous). Applying Ito's product formula to $V,W$ (see \cite{Bass}) and taking expectations we have
\begin{equation}\label{ito}
\mathbb{E}^x e^{Y_{t\wedge\tau_B}}u(X_{t\wedge\tau_B})= u(x) +
\mathbb{E}^x \int_{0}^{t\wedge\tau_B}u(X_s)e^{Y_s}q(X_s)ds +
\mathbb{E}^x \int_{0}^{t\wedge\tau_B} e^{Y_s}\mathcal Lu(X_s)ds.
\end{equation}
\noindent Since $\mathcal Lu + qu =0,$ this implies
\begin{equation}\label{2a}
\mathbb{E}^x e^{Y_{t\wedge\tau_B}}u(X_{t\wedge\tau_B})= u(x).
\end{equation}
 As $(B,q)$ is gaugeable, allowing $ t \rightarrow \infty$ in (\ref{2a}), the
dominated convergence theorem implies (\ref{FC}). 
\qed

\vspace{0.1in}

\noindent When $X_t$ is a Brownian motion, it is known that the union of
gaugeable balls is gaugeable (see Lemma 4.16 in \cite{CZ}). The result
is true for solutions to the martingale problem as
well. The proof is similar and requires only minor modification. For
the sake of completeness we state the result and prove it below.

\begin{proposition} \label{unionG}
Let $C_1, C_2 \subset B$ be balls with $C_1\cap C_2\neq\varnothing$
and suppose $(C_i, q)$ is gaugeable. Let $C = C_1\cup C_2.$ Suppose
$q$ satisifies assumption $(Q),$ and there exists a bounded solution $u$ satisfying
$\mathcal{L}u + qu = 0$ in $C,$ with $\inf_{C}u > 0.$ Then $(C, q)$ is
gaugeable.
\end{proposition}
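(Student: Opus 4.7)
The plan is to iterate the Feynman--Kac representation of Proposition \ref{fkac} alternately between $C_1$ and $C_2$. Set $T_0 = 0$ and, for $k \geq 1$, define $T_k = T_{k-1} + \tau_{C_{i_k}} \circ \theta_{T_{k-1}}$ with $i_k = 1$ for odd $k$ and $i_k = 2$ for even $k$; since $\tau_{C_i}$ starting from a point outside $C_i$ is $0$, the sequence $(T_k)$ stabilises at $\tau_C$ once the process has left $C$. Because $u$ solves $\mathcal{L}u + qu = 0$ on $C$ (and hence on each $C_i$), and each $(C_i,q)$ is gaugeable, Proposition \ref{fkac} applied alternately together with the strong Markov property will yield the iterated identity
\[
u(x) = \mathbb{E}^x\bigl(e_q(T_k)\,u(X_{T_k})\bigr) \quad \text{for all } k \geq 0.
\]

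The role of the assumption $\inf_C u > 0$ becomes clear in the next step. Writing $m = \inf_C u$ and $M = \|u\|_\infty$, on the event $\{X_{T_k} \in C\}$ one has $u(X_{T_k}) \geq m$, whereas on $\{X_{T_k} \in C^c\}$ one has $T_k = \tau_C$ and $|u(X_{T_k})| \leq M$. Splitting the iterated identity along these two events and using $u \geq -M$ to absorb the potentially negative outside contribution will yield the key inequality
\[
m\,\mathbb{E}^x\bigl(e_q(T_k);\,X_{T_k} \in C\bigr) \leq u(x) + M\,\mathbb{E}^x\bigl(e_q(\tau_C);\,T_k = \tau_C\bigr).
\]
Separately, Assumption (J) together with the L\'evy system formula for $X$ produces some $\delta>0$ with $\mathbb{P}^y(X_{\tau_{C_i}}\in C^c) \geq \delta$ uniformly in $y \in C_i$; by the strong Markov property this gives $\mathbb{P}^x(X_{T_k}\in C) \leq (1-\delta)^k$, so the exit index $N := \inf\{k: X_{T_k}\in C^c\}$ is almost surely finite and $T_k \uparrow \tau_C$.

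Letting $k\to\infty$, the term $\mathbb{E}^x(e_q(\tau_C);T_k = \tau_C)$ increases monotonically to $\mathbb{E}^x(e_q(\tau_C))$. A bootstrap argument in the spirit of Chung--Zhao \cite{CZ}, Lemma 4.16, should then combine this with the displayed inequality to conclude $\mathbb{E}^x(e_q(\tau_C))<\infty$; morally, the positivity $\inf_C u > 0$ supplies the uniform-in-$k$ a priori control of the ``stays inside $C$'' contribution that is needed to close the loop, while the geometric decay of $\mathbb{P}(X_{T_k}\in C)$ lets the ``has exited'' contribution be absorbed into the known gauges on $C_1$ and $C_2$. The main obstacle is this final passage to the limit, which at first glance looks circular because $\mathbb{E}^x(e_q(\tau_C))$ appears on both sides: the resolution uses the alternating scheme to express the gauge on $C$ as a geometric-type series whose terms are controlled by $u(x)/m$ and by the individual gauges $\mathbb{E}^{\cdot}(e_q(\tau_{C_i}))$. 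A secondary technical point will be verifying $T_k \uparrow \tau_C$ in the presence of infinite-activity jumps, which follows from the bounds on $J$ in (J) and the c\`adl\`ag structure of $X$.
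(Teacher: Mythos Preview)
Your alternating-stopping-time scheme and the iterated identity $u(x)=\mathbb{E}^x\bigl(e_q(T_k)\,u(X_{T_k})\bigr)$ are exactly what the paper does, and this is the heart of the argument. Where you diverge is in extracting the conclusion, and there you run into two problems.

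First, your claim that $\mathbb{P}^y(X_{\tau_{C_i}}\in C^c)\geq\delta>0$ uniformly in $y\in C_i$ is false. Take $y\in C_1$ approaching a point $z_0\in\partial C_1$ that lies well inside $C_2$; then $\mathbb{E}^y(\tau_{C_1})\to 0$, so the L\'evy-system contribution to landing in $C^c$ vanishes, while any continuous exit near $z_0$ lands in $C_2\subset C$. Hence $\mathbb{P}^y(X_{\tau_{C_1}}\in C^c)\to 0$ along such $y$. Fortunately this claim is unnecessary: the paper proves $T_k\uparrow\tau_C$ directly from the c\`adl\`ag structure. If $T_k\uparrow T<\tau_C$, then $X_{T_{2n-1}}\in C_2\setminus C_1$ and $X_{T_{2n}}\in C_1\setminus C_2$ for all $n$, and the existence of left limits forces the common limit into $\overline{C_1\setminus C_2}\cap\overline{C_2\setminus C_1}\subset C_1^c\cap C_2^c=C^c$, contradicting $T<\tau_C$.

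Second, and more seriously, your endgame is circular and left incomplete: you arrive at an inequality with $\mathbb{E}^x(e_q(\tau_C))$ on the right-hand side, acknowledge the circularity, and then gesture at a ``geometric-type series'' resolution without carrying it out. The paper avoids this detour entirely. From the iterated identity and the positivity of $u$ one reads off directly
\[
\bigl(\inf u\bigr)\,\mathbb{E}^x\bigl(e_q(T_m)\bigr)\;\leq\;\mathbb{E}^x\bigl(e_q(T_m)\,u(X_{T_m})\bigr)\;=\;u(x)\;\leq\;\sup u,
\]
so that $\mathbb{E}^x(e_q(T_m))\leq\sup u/\inf u$ uniformly in $m$. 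Then $T_m\to\tau_C$ and Fatou's lemma give $\mathbb{E}^x(e_q(\tau_C))\leq\sup u/\inf u<\infty$. No splitting, no bootstrap, no series. Your concern about the sign of $u$ on $C^c$ (using $|u|\leq M$ rather than $u\geq m$ there) is precisely what drives you into the circular inequality; the paper simply uses that $u$ is strictly positive on the range of $X_{T_m}$, which in the intended application (Theorem~\ref{Harnack}) is all of $\mathbb{R}^d$.
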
 

\begin{proof}  Define for any $t >0$,  $e_q(t) = \exp(\int_{0}^{t}q(X_s)ds).$
Since each $C_i,$ $i=1,2$ is gaugeable, we can apply Proposition \ref{fkac} to observe that for $i =1,2$ and $x \in C_i$
\begin{equation}\label{star}
u(x) = \mathbb{E}^x\left(e_q(\tau_{C_i})u(X_{\tau_{C_i}})\right).
\end{equation} 
 We will show that equation (\ref{star}) holds when $C_i$ is replaced
 by $C.$ Without loss of generality we may assume $x\in C_1.$ Let $T_0
 = 0,$ and for $n\geq 1$ set
\begin{eqnarray}\label{times}
T_{2n-1}& =& T_{2n-2} + \tau_{C_1}\circ\theta_{T_{2n-2}} \nonumber\\
T_{2n} & =& T_{2n-1} + \tau_{C_2}\circ\theta_{T_{2n-1}},
\end{eqnarray}
 where $\theta$ is the canonical time-shift operator. We will
show that
\begin{equation}\label{fatoulim}
u(x) = \mathbb{E}^x\{e_q(T_m))u(X_{T_m})\}, \hspace{0.05in} m\geq 0.
\end{equation}
This is true for $m=0.$ Suppose that $u(x)= \mathbb{E}^x\left(e_q(T_{2n})u(X_{T_{2n}})\right).$ Then on $\{T_{2n} < \tau_C\},$ we have $X_{T_{2n}}\in C_1.$ Therefore by (\ref{star}), 
\begin{eqnarray}\label{star1}
u(x) &= &\mathbb{E}^x\left(T_{2n} = \tau_C,  e_q(T_{2n})u(X_{T_{2n}})\right)\nonumber\\
     & +& \mathbb{E}^x\left(T_{2n} < \tau_C, e_q(T_{2n})\mathbb{E}^{X_{T_{2n}}}\left(e_q(\tau_{C_1})u(X_{\tau_{C_1}})\right)\right).
\end{eqnarray}
On $\{T_{2n} = \tau_C\},$ we have $T_{2n} = T_{2n+1}.$ Hence the first
term on the right side of equation (\ref{star1}) is equal to
$\mathbb{E}^x\left(T_{2n} = \tau_C, e_q(T_{2n+1})u(X_{T_{2n+1}})\right). $ By
the definition of $T_{2n+1}$ and the strong Markov property, the
second term on the right side of equation \eqref{star1} is equal to\newline
$\mathbb{E}^x\left(T_{2n} < \tau_C, e_q(T_{2n+1})u(X_{T_{2n+1}})\right).$ Now 
adding the two terms we obtain \eqref{fatoulim} with $m= 2n +1.$ In a
similar manner, one can prove that if \eqref{fatoulim} holds for $m
=2n + 1,$ then it holds for $m= 2n + 2.$ Hence \eqref{fatoulim} holds
for all $m\geq 0$ by induction and it implies
that \begin{equation} \label{ubound}\left[\inf_{x\in C}
    u(x)\right]\mathbb{E}^x\left(e_q(T_m)\right)\leq\sup_{x\in C} u(x).
\end{equation}
We now establish that almost surely, $\lim_{m\to\infty}T_m = \tau_C.$
By \eqref{fupperbd} above, we first note that $\tau_C <
\infty$ with probability $1.$ As $T_m$ is increasing in $m$ and
$T_m\leq\tau_C$ we have $\lim_{m\to\infty}T_m = T\leq\tau_C.$ If $T <
\tau_C,$ then since $X_{T_{2n - 1}}\in\partial C_1$ and
$X_{T_{2n}}\in\partial C_2$ for all $n,$ we can use the fact that the
process has left limits and conclude that
$$X_T\in \partial C_1\cap\partial C_2\subset (C_1\cup C_2)^c.$$
But this implies that $T\geq\tau_C,$ which is a contradiction. Therefore almost surely $\lim_{m\to\infty}T_m = \tau_C.$ 
Using Fatou's lemma and the fact that $u$ is strictly positive, from \eqref{ubound} we have that $$\mathbb{E}^x\left(e_q(\tau_C)\right)\leq\frac{\sup_{x\in C} u(x)}{\inf_{x\in C} u(x)} < \infty.$$ So $(C, q)$ is gaugeable. 
\end{proof}

We have seen that every ball of sufficiently small radius is gaugeable. Now we prove Proposition \ref{ballgauge} which states that the ball $B$ is gaugeable.

\vspace{0.1in}

\noindent \textbf{Proof of Proposition \ref{ballgauge}:}
There exists a a sequence of bounded domains $\{D_n\}$ such that
$\overline{D_n}\subset B,$ $D_n\uparrow B.$ Recall that  $m$ denotes the Lebesgue measure, so it has no
atoms.  Therefore for each $n,$ $D_n$ can be written as the finite
union of balls $C,$ with $m(C) <\delta$ so that each $(C, q)$ is
gaugeable. Then by repeated application of Proposition \ref{unionG},
$(D_n, q)$ is gaugeable for each $n,$ and by Proposition \ref{fkac},
$$ u(x) = \mathbb{E}^x\left(e_q(\tau_{D_n})u(X_{\tau_{D_n}})\right),\hspace{0.05in} x\in D_n.$$ 
As before, we have $$ \mathbb{E}^x(e_q(\tau_{D_n})\leq\frac{\sup_{x\in B}u(x)}{\inf_{x\in B}u(x)}$$
Since $\tau_{D_n}\uparrow\tau_B < \infty$ a.s, we obtain by Fatou's
lemma that
$$\mathbb{E}^x(e_q(\tau_{B}))\,\,\leq\,\,\liminf_{n \rightarrow \infty}\mathbb{E}^x\left(e_q(\tau_{D_n})\right)\,\, \leq \,\,\frac{\sup_{x\in B}u(x)}{\inf_{x\in B}u(x)}\,\,<\,\,\infty.$$
\qed

\section{Potential Theory for $\mathcal{L}$} \label{potl} 

In this section we state and prove a uniform Boundary Harnack
principle for $\mathcal L$-harmonic functions (Theorem
\ref{BdryHarnack}). The classical version of
Boundary Harnack principle follows from this result. A key ingredient to
prove the Boundary Harnack principle is the Carleson Estimate (Theorem
\ref{Carleson}), which we prove first.  We begin by fixing some
notation.
\begin{notation} \label{not1}
The ball $B$ is a smooth domain. So there exists a localization radius $R_1 < R/4$ and a constant $M_1$ such that for every
$Q\in\partial B,$ there exist a smooth function $\phi = \phi_Q :
  \mathbb{R}^{d-1}\rightarrow\mathbb{R}$ satisfying $\phi(0) = 0,
  \nabla \phi(0) = 0,$ $\mid \nabla\phi(x) - \nabla\phi(y)\mid \leq M_1\mid x
  -y\mid ,$ and a coordinate system $CS_Q$ with $y = (\tilde{y}, y_d)$
  with its origin at $Q$ such that
$$B(Q, R_1)\cap B = \{(\tilde{y}, y_d)\in B(0, R_1): y_d >
  \phi(\tilde{y})\}.$$ We define $\rho_Q(y)= y_d - \phi(\tilde y)$,
  denote $\delta_B(x) = dist(x , \partial B),$ $r_0 =
  \frac{R_1}{4(1+M_1^2)},$ $R_0 = \frac{R_1}{\sqrt{1+ M_1^2}}$ and
  $$D_Q(r_1, r_2) = \{y\in B: 0 < \rho_Q(y) < r_1, \mid \tilde y\mid <
  r_2\}.$$
\end{notation}
An important ingredient in the proofs will
be the Levy system formula (see (1.4) in \cite{cskv}) associated with
the jump process $X$ given by a jump kernel $J$. For any non-negative
measurable function $g: \mathbb{R}_{+}\times\mathbb{R}^d\times\mathbb{R}^d \rightarrow [0, \infty),$ with $g(s, y, y)
= 0$ for all $y\in\mathbb{R}^d,$ any stopping time $T$ (with respect
to filtration of X), and any $x\in\mathbb{R}^d$
\begin{equation}\label{levy}
\mathbb{E}^x\left(\sum_{s\leq T}g(s, X_s, X_{s-})\right) =
\mathbb{E}^x\left(\int_{0}^{T}\left(\int_{\mathbb{R}^d}g(s, X_s,
  y)J(X_s,y)dy\right)ds\right)
\end{equation}  
We remark at this point that the proofs of the results in this entire
section follow the noation, ideas, and techniques in
\cite{cskv}. Instead of citing the results without proof when
required, we reproduce the proof here for the reader's convenience.

\subsection{Carleson Estimate}

We begin with some technical lemmas from the literature to understand
the behavior of exit distributions of the process $X_t$.
\begin{lemma}\label{foon2}
There exists positive constants $c_1$ and $c_2$ such that
\begin{equation}\label{foonL}
\mathbb{E}^x(\tau_{B})\leq c_1R^2, \quad x\in B,
\end{equation}
and 
\begin{equation}\label{foonLL}
\mathbb{E}^x(\tau_{B})\geq c_2R^2,\quad  x\in B(x_0, \frac{R}{2}).
\end{equation}
\end{lemma}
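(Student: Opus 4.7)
The upper bound (\ref{foonL}) is already stated in the paper as Lemma 2.6 of \cite{CK2}, and its proof can be recovered by applying Dynkin's formula to the quadratic test function $f(x)=|x-x_0|^2$: the diffusion part $\mathcal{L}_c f$ is bounded below by a positive constant via the ellipticity Assumption~(A), while the non-local part $\mathcal{L}_j f$ contributes only a term of order $R^{2-\alpha}$ on $B$, which is lower order for $R\leq 1$ and $\alpha<2$. I will therefore concentrate on the lower bound (\ref{foonLL}).

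For (\ref{foonLL}), the plan is the standard cutoff-plus-Dynkin argument adapted to the non-local setting. Fix a smooth radial bump $\phi : [0,\infty) \to [0,1]$ with $\phi \equiv 1$ on $[0,3/4]$ and $\phi \equiv 0$ on $[1,\infty)$, and set $\psi(x) := \phi(|x-x_0|/R)$. Then $\psi \in C^2_b(\mathbb{R}^d)$ with $\psi \equiv 1$ on $B(x_0, 3R/4)\supset B(x_0, R/2)$ and $\psi\equiv 0$ outside $B$. The key step is to show the uniform estimate
\begin{equation*}
\sup_{x\in\mathbb{R}^d}|\mathcal{L}\psi(x)|\leq C_0 R^{-2},
\end{equation*}
after which It\^o's formula (valid for $\psi\in C^2_b$) combined with optional stopping at $\tau_B$ (finite in expectation by the upper bound (\ref{foonL})) yields, for $x\in B(x_0, R/2)$,
\begin{equation*}
1 \;=\; \psi(x)-\mathbb{E}^x\psi(X_{\tau_B}) \;=\; -\mathbb{E}^x\!\int_0^{\tau_B}\mathcal{L}\psi(X_s)\,ds \;\leq\; C_0 R^{-2}\,\mathbb{E}^x\tau_B,
\end{equation*}
so that $\mathbb{E}^x\tau_B\geq R^2/C_0$, which is (\ref{foonLL}).

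The verification of $|\mathcal{L}\psi|\leq C_0 R^{-2}$ splits into two pieces. The diffusion part $\mathcal{L}_c\psi$ is $O(R^{-2})$ directly from $\|\nabla^2\psi\|_\infty\leq C/R^2$ and $\|\nabla\psi\|_\infty\leq C/R$ together with Assumption~(A). For the jump part I split the integration at $|y-x|=R$: the far-field contribution is bounded by $2\|\psi\|_\infty\int_{|y-x|\geq R}J(x,y)\,dy\leq cR^{-\alpha}$, and the near-diagonal contribution uses a second-order Taylor expansion of $\psi$ together with the upper bound on $J$ from (\ref{dl2}) to extract another $cR^{-\alpha}$ factor. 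Since $\alpha<2$ and $R\leq 1/2$, $R^{-\alpha}\leq c'R^{-2}$, closing the bound. The main obstacle is precisely the near-diagonal jump estimate when $\alpha\in[1,2)$: the first-order Lipschitz bound $|\psi(y)-\psi(x)|\leq (C/R)|y-x|$ fails to be integrable against $J(x,y)\asymp |y-x|^{-d-\alpha}$, forcing one to use the full $C^2$ regularity of $\psi$ and the symmetry $J(x,y)=J(y,x)$ to control (via principal value) the cancellation that makes $\mathcal{L}_j$ well-defined and bounded on $C^2_b(\mathbb{R}^d)$.
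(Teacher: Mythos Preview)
The paper does not prove this lemma; it simply cites Lemma~3.4 of \cite{foon1}. Your direct argument for the lower bound via a scaled cutoff and It\^o--Dynkin is exactly the standard one (and is essentially what \cite{foon1} carries out), so you have in effect reconstructed the cited proof rather than taken a genuinely different route.

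Two caveats are worth flagging. First, your parenthetical recovery of the \emph{upper} bound via the quadratic $f(x)=|x-x_0|^2$ fails as written: for $J(x,y)\asymp|x-y|^{-d-\alpha}$ with $\alpha<2$ the integral $\int(|y-x_0|^2-|x-x_0|^2)\,J(x,y)\,dy$ diverges at infinity, so $\mathcal L_j f$ is not defined on an unbounded quadratic---one must truncate, or simply defer to the heat-kernel route of \cite{CK2} as the paper does. Second, in your near-diagonal jump estimate for $\alpha\in[1,2)$, the exchange symmetry $J(x,y)=J(y,x)$ is not the ingredient that kills the first-order Taylor term $\nabla\psi(x)\cdot(y-x)$ in the principal value; that cancellation would require evenness of $h\mapsto J(x,x+h)$, which is a distinct (and not assumed) hypothesis. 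The conclusion $|\mathcal L\psi|\leq C_0R^{-2}$ is still correct in the Dirichlet-form framework of \cite{CK2} (where $C^2_c$ functions lie in the domain of the generator), but the justification should pass through that rather than through the symmetry you cite.
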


\begin{proof} See Lemma 3.4 in \cite{foon1}.
\end{proof}
\begin{lemma}\label{fo1}
There exists a non-decreasing function $\psi:(0, 1)\rightarrow (0,
1),$ such that if $C\subset B(x_0, r),$ $\mid C\mid  >0,$ $r\in(0, 1],$ and
  $x\in B(x_0, r/2),$ then
$$\mathbb{P}^x(T_C\leq\tau_{B(0, r)})\geq\psi\left(\frac{\mid C\mid }{r^d}\right).$$
\end{lemma}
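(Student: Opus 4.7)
The plan is to convert the hitting statement into an expected occupation-time inequality via strong Markov, and then produce a potential lower bound for $C$ using the L\'evy system (\ref{levy}) together with the two-sided estimates on $\mathbb E^x[\tau_{B(x_0,r)}]$ from Lemma \ref{foon2}.

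\emph{Strong-Markov step.} Let $G := G_{B(x_0,r)}$ denote the Green function of $X$ killed on leaving $B(x_0,r)$. Since $\mathbb E^x\!\int_0^{\tau_{B(x_0,r)}}1_C(X_s)\,ds = \int_C G(x,y)\,dy$, splitting at $T_C$ and using the strong Markov property together with the upper half of Lemma \ref{foon2} gives
$$\int_C G(x,y)\,dy \;\le\; \mathbb P^x\bigl(T_C\le\tau_{B(x_0,r)}\bigr)\sup_z\mathbb E^z\tau_{B(x_0,r)} \;\le\; c_1 r^2\,\mathbb P^x\bigl(T_C\le\tau_{B(x_0,r)}\bigr),$$
so it suffices to bound $\int_C G(x,y)\,dy$ below by a multiple of $r^2\,\tilde\psi(|C|/r^d)$ for some non-decreasing $\tilde\psi$, uniformly in $x\in B(x_0,r/2)$ and $C\subset B(x_0,r)$. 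If already $\int_C G(x,y)\,dy\ge \tfrac12\mathbb E^x[\tau_{B(x_0,r)}]\ge \tfrac{c_2}{2}r^2$ we are done with a constant $\tilde\psi$; otherwise $\mathbb E^x[\int_0^{\tau_{B(x_0,r)}}1_{\{X_s\notin C\}}\,ds]\ge \tfrac{c_2}{2}r^2$.

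\emph{L\'evy-system lower bound.} In the nontrivial case I would apply (\ref{levy}) to $g(s,z,y)=1_{\{z\notin C\}}\,1_C(y)$. By (J), $J(z,y)\ge c(2r)^{-(d+\alpha)}$ for all $z,y\in B(x_0,r)$, so the expected number $N$ of fresh entries of $X$ into $C$ before $\tau_{B(x_0,r)}$ satisfies
$$\mathbb E^x[N]=\mathbb E^x\!\int_0^{\tau_{B(x_0,r)}}\!1_{\{X_s\notin C\}}\!\int_C J(X_s,y)\,dy\,ds \;\ge\; c\,|C|\,r^{2-d-\alpha}.$$
Since $\{T_C\le\tau_{B(x_0,r)}\}\supset\{N\ge 1\}$ (the case $x\in C$ gives $T_C=0$ trivially), a second strong-Markov application at the first fresh-entry time---combined with a Chebyshev-type reversal---converts this expected-count bound into the desired probability bound.

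\emph{Main obstacle.} The delicate point is the conversion from $\mathbb E^x[N]\ge\cdot$ to $\mathbb P^x(N\ge 1)\ge\cdot$: the natural bound $\mathbb P^x(N\ge 1)\ge\mathbb E^x[N]/\sup_y\mathbb E^y[N]$, obtained by restarting at the first fresh entry, requires $\sup_y\mathbb E^y[N]$ to be finite, but the diagonal singularity of $J$ makes this supremum infinite for starting points $y\in\bar C$. The standard remedy is to replace $N$ by the count of fresh entries of jump size at least $r/4$, which uses the upper bound in (J) (together with $|z-y|\ge r/4$) to keep the supremum finite while losing only a multiplicative factor of $r^\alpha$. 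The resulting function can be chosen of the form $\psi(s) = c_3 s$ (clipped so as to lie in $(0,1)$), which is manifestly non-decreasing.
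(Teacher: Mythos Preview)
The paper does not prove this lemma; it simply cites Corollary~4.9 of \cite{foon1}. So there is no in-paper argument to compare against---the question is only whether your sketch stands on its own.

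Your strong-Markov reduction (first paragraph) is correct and is the standard opening move. The gap is in the ``remedy''. You assert that replacing $N$ (all fresh entries into $C$) by $N'$ (fresh entries via jumps of size $\ge r/4$) costs ``only a multiplicative factor of $r^{\alpha}$'' on the lower bound. This is not so: the restricted count is
\[
\mathbb{E}^x[N'] \;=\; \mathbb{E}^x\!\left[\int_0^{\tau_{B(x_0,r)}}\mathbf{1}_{\{X_s\notin C\}}\int_{C\cap\{y:\,|X_s-y|\ge r/4\}} J(X_s,y)\,dy\,ds\right],
\]
and the inner domain of integration may be empty. If $\mathrm{diam}(C)\ll r/4$, the process can spend all of its outside-$C$ occupation time within distance $r/4$ of $C$, so that no qualifying jump exists and $\mathbb{E}^x[N']=0$. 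Your Case-2 hypothesis $\mathbb{E}^x\bigl[\int_0^\tau\mathbf{1}_{C^c}(X_s)\,ds\bigr]\ge \tfrac{c_2}{2}r^2$ records only the \emph{total} outside-$C$ time; it gives no control on \emph{where} that time is spent. The factor $r^{\alpha}$ you invoke is the one governing the upper bound $\sup_y\mathbb{E}^y[N']\le c\,r^{2-\alpha}$; it does not describe what is lost on the lower side.

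To repair the argument along jump-counting lines you would first have to show that a definite fraction of the occupation time is spent at distance $\ge\delta r$ from $C$ for some universal $\delta>0$. That is an additional nontrivial estimate: the Green-function upper bound $G_{B(x_0,r)}(x,y)\le c|x-y|^{2-d}$ gives $\int_{B(p,\delta r)}G_{B(x_0,r)}(x,y)\,dy\le c'\,r^2$ with a constant $c'$ that need not be smaller than $c_2$, so one cannot simply subtract. The proofs in the cited literature bring in the local part of $\mathcal L$ directly---via Krylov--Safonov type support estimates in \cite{foon1}, or via the killed heat-kernel lower bounds that follow from the parabolic Harnack inequality in \cite{CK2}---rather than relying on the jump mechanism alone.
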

\begin{proof} See Corollary 4.9 in \cite{foon1}.
\end{proof}
\begin{lemma}\label{hmeasure}
Let $R_1$ and $\rho_Q$ be defined as above.  Then, there exists a constant $\delta = \delta(R_1, M_1) >
0,$ such that for all $Q\in\partial B, x\in B$ with $\rho_Q(x) <
R_1/2,$
$$\mathbb{P}^x(X_{\tau(x)}\in B^c)\geq\delta,$$
where $\tau(x) = \inf\{t>0: X_t\notin B\cap B(x, 2\rho_Q(x))\}.$
\end{lemma}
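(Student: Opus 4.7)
The plan is to reduce the claim to the hitting-probability estimate Lemma \ref{fo1} by identifying a target set $C \subset B^c$ inside $B(x,2r)$ whose Lebesgue measure is comparable to $r^d$, where $r:=\rho_Q(x)$. With $D=B\cap B(x,2r)$, note that $\tau(x)=\tau_D$, so it is enough to show $\mathbb{P}^x(X_{\tau_D}\in B^c)\geq\delta$ for a $\delta=\delta(R_1,M_1)>0$.

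The first step is to take $C:=B(x,2r)\setminus\overline{B}$ and to establish the geometric bound $|C|\geq c_1 r^d$ for some $c_1=c_1(M_1)>0$. Two observations drive this: (i) $\delta_B(x)\leq\rho_Q(x)=r$, because in the coordinate system $CS_Q$ one reaches $\partial B$ from $x$ by moving distance $r$ in the $-y_d$ direction; and (ii) the $C^{1,1}$ graph description of $\partial B$ from Notation \ref{not1}, with Lipschitz constant $M_1$ on $\nabla\phi$, produces a uniform exterior cone condition at every boundary point in $B(Q,R_1)$ with aperture depending only on $M_1$. Choosing a boundary point $z$ with $|x-z|\leq r$ and truncating the exterior cone at $z$ at height $r$ yields a subset of $B^c\cap B(x,2r)$ (the containment follows from the triangle inequality) of measure at least $c_1 r^d$.

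Next I would invoke Lemma \ref{fo1} with $x_0:=x$ and the lemma's parameter equal to $2r$. The hypothesis $2r\in(0,1]$ is met since $2r<R_1<R<1$, and $x\in B(x_0,r)$ is automatic. Hence
\begin{equation*}
\mathbb{P}^x\!\left(T_C\leq\tau_{B(x,2r)}\right) \;\geq\; \psi\!\left(\frac{|C|}{(2r)^d}\right) \;\geq\; \psi\!\left(\frac{c_1}{2^d}\right) \;=:\; \delta \;>\; 0,
\end{equation*}
with $\delta=\delta(R_1,M_1)$ because $\psi$ is non-decreasing. Finally, on the event $\{T_C\leq\tau_{B(x,2r)}\}$ the process has entered $C\subset B^c$ while still inside $B(x,2r)$, so $\tau_B\leq T_C\leq\tau_{B(x,2r)}$, which forces $\tau(x)=\min(\tau_B,\tau_{B(x,2r)})=\tau_B$ and hence $X_{\tau(x)}=X_{\tau_B}\in B^c$ by definition. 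Combining the two displays yields the claim.

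The principal technical obstacle is the uniform volume estimate $|C|\geq c_1 r^d$ — that is, extracting an exterior cone at a point of $\partial B$ close to $x$ from the $C^{1,1}$ regularity, and verifying that a truncation of it fits inside $B(x,2r)$ for all admissible $r$. Once this is in hand, the rest is a direct application of Lemma \ref{fo1} and the elementary chain of inequalities linking $T_C$, $\tau_B$, and $\tau_{B(x,2r)}$.
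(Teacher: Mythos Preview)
Your proposal is correct and follows essentially the same route as the paper: set $C=B(x,2\rho_Q(x))\cap B^c$, observe that $\{T_C\le\tau_{B(x,2\rho_Q(x))}\}\subset\{X_{\tau(x)}\in B^c\}$, and invoke Lemma~\ref{fo1}. The only difference is that you justify the volume bound $|C|\ge c_1(2r)^d$ via an exterior-cone argument from the $C^{1,1}$ description, whereas the paper simply asserts the bound (in fact, since $B$ is a Euclidean ball, convexity gives an exterior tangent half-space at the nearest boundary point, so $C$ contains a spherical cap of height $\ge r$ in $B(x,2r)$, yielding a purely dimensional lower bound on $|C|/(2r)^d$ without any appeal to $M_1$).
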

\begin{proof}
Denote $\widetilde B = B(x, 2\rho_Q(x)),$ and $C = \widetilde B\cap
B^c.$ Recall that $2\rho_Q(x) < R_1 < 1.$ Observe that $\{T_C <
\tau_{\widetilde B}\}\subset\{X_{\tau(x)}\in B^c\}.$ Therefore using
Lemma \ref{fo1} (with $r = 2\rho_Q(x)$), we obtain
\begin{equation}\label{0}
\mathbb{P}^x(X_{\tau(x)}\in B^c)\geq \mathbb{P}^x(T_C < \tau_{\widetilde
  B})\geq\psi(1/2) > 0.
\end{equation}
This shows that we can take $\delta = \psi(1/2),$ and finishes the proof of the lemma.
\end{proof}
We are now ready to prove the Carleson estimate.
\begin{theorem}\label{Carleson}
Let $Q\in\partial B. $ Let $u$ be a non-negative function
in $\mathbb{R}^d$ that is $\mathcal L$ harmonic in $B\cap B(Q, r),$
with $r< R_1/2$ and suppose that $u$ vanishes continuously on $B^c\cap
B(Q, r).$ Then there exists a positive constant $c= c(\alpha,
R_1, M_1)$ such that
\begin{equation}\label{estimate}
 u(x)\leq cu(x_0), \quad \mbox{for}\quad x\in B\cap B(Q, r/2).
\end{equation}
where $x_0\in B\cap B(Q, r),$ with $\rho_Q(x_0) = r/2.$
\end{theorem}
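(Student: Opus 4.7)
The plan is to imitate the box-iteration argument of Chen--Kim--Song--Vondra\v{c}ek (as in \cite{cskv}), adapted to $\mathcal{L}$. The idea is to express $u(x)$ via the harmonic representation on a small box centered at $x$, and split the exit distribution into three disjoint pieces: a piece where the process exits into $B^c \cap B(Q,r)$ (where $u$ vanishes), a piece where it exits within $B$ at a point further from $\partial B$ than $x$ (iterated later), and a piece where a large jump deposits the process outside $B(Q,r)$ (controlled by the L\'evy system).

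First, I would reduce to the case where $x$ is close to $\partial B$: if $\rho_Q(x) \geq \eta r$ for a suitable $\eta = \eta(M_1)$, a standard chain of balls connecting $x$ to $x_0$ inside $B \cap B(Q,r)$ together with the interior elliptic Harnack inequality (Proposition \ref{hq0}) yields $u(x) \leq c\,u(x_0)$ directly. So assume $\rho_Q(x) < \eta r$ and set $U_x = B \cap B(x, 2\rho_Q(x))$. Since $U_x \subset B \cap B(Q,r)$ and $u$ is $\mathcal{L}$-harmonic there,
\begin{equation*}
u(x) = \mathbb{E}^x\bigl[u(X_{\tau_{U_x}})\bigr] = I_1 + I_2 + I_3,
\end{equation*}
where $I_1$, $I_2$, $I_3$ correspond respectively to exit into $B^c \cap B(Q,r)$ (contribution $0$), exit within $B$ (hence at distance $\geq \rho_Q(x)$ from $\partial B$, and in particular at a point where the iteration may be continued), and exit by a jump landing in $B(Q,r)^c$.

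The crucial quantitative inputs would be used as follows. By Lemma \ref{hmeasure}, $\mathbb{P}^x(X_{\tau_{U_x}} \in B^c) \geq \delta > 0$; for $x$ deep enough in the Carleson box, Notation \ref{not1} forces such exits to occur inside $B(Q,r)$, producing the contraction factor that drives the iteration. For $I_3$, I would combine the L\'evy system formula (\ref{levy}), the upper bound on $J$ in Assumption (J), and the exit time bound $\mathbb{E}^x[\tau_{U_x}] \leq c \rho_Q(x)^2$ from Lemma \ref{foon2}, to obtain
\begin{equation*}
\mathbb{P}^x\bigl(X_{\tau_{U_x}} \in B(Q,r)^c\bigr)
\,\leq\, c\,\mathbb{E}^x[\tau_{U_x}] \sup_{z \in U_x} \int_{|z-w|>r/2} \frac{dw}{|z-w|^{d+\alpha}}
\,\leq\, c' \left(\frac{\rho_Q(x)}{r}\right)^{2} r^{-\alpha},
\end{equation*}
which is very small when $\rho_Q(x) \ll r$.

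Setting $M_k = \sup\{u(y) : y \in B \cap B(Q,r/2),\ \rho_Q(y) \leq 2^{-k} r\}$, the three estimates above combine to yield a recursion of the form
\begin{equation*}
M_k \,\leq\, (1-\delta)\, M_{k-1} + c\, u(x_0),
\end{equation*}
where the $u(x_0)$ term absorbs both $I_3$ (using a crude global bound for $u$ on $B(Q,r)^c$ that is itself pushed back to $u(x_0)$ by Proposition \ref{hq0} applied along a Harnack chain from the sphere $\partial B(x, 2\rho_Q(x))$ to $x_0$) and the interior contribution from $I_2$ when $X_{\tau_{U_x}}$ lands at a point with $\rho_Q \geq \eta r$. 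Iterating gives $M_k \leq c''\,u(x_0)$ uniformly in $k$, which is (\ref{estimate}).

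The main obstacle is the jump term $I_3$: a naive bound, replacing $u(X_{\tau_{U_x}})$ by its supremum over $B(Q,r)^c$, leads to an uncontrolled quantity because $u$ is only assumed $\mathcal{L}$-harmonic on $B \cap B(Q,r)$ and could in principle be very large far away. The way around this, which I would follow carefully from \cite{cskv}, is to first run the argument on a slightly larger scale $(r', r)$ with $r' \in (r/2, r)$, use the vanishing of $u$ on $B^c \cap B(Q, r)$ to show that the contribution of distant exits is already controlled by a Harnack-chain-comparable multiple of $u(x_0)$, and only then iterate in $k$. This separation of scales is the only delicate point; the remainder is a routine combination of Lemmas \ref{foon2}, \ref{fo1}, \ref{hmeasure}, the L\'evy system (\ref{levy}), and the interior Harnack inequality (Proposition \ref{hq0}).
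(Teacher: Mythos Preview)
Your proposal has the right architecture (harmonic representation on $U_x = B\cap B(x,2\rho_Q(x))$, the contraction factor from Lemma~\ref{hmeasure}, L\'evy system for the jump part), but the handling of the far-jump term $I_3$ has a genuine gap, and the fix you sketch is not the one that works.

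Concretely: you bound only the \emph{probability} $\mathbb{P}^x(X_{\tau_{U_x}}\in B(Q,r)^c)$, and then propose to convert this into a bound on $\mathbb{E}^x[u(X_{\tau_{U_x}});X_{\tau_{U_x}}\in B(Q,r)^c]$ by ``a crude global bound for $u$ on $B(Q,r)^c$ \ldots\ pushed back to $u(x_0)$ by a Harnack chain.'' No such bound is available: $u$ is only $\mathcal L$-harmonic in $B\cap B(Q,r)$, so Harnack chains cannot reach points of $B(Q,r)^c$, and $u$ may be arbitrarily large there. Your alternative suggestion, running the argument first at an intermediate radius $r'\in(r/2,r)$, faces exactly the same obstruction at scale $r'$ and does not close the loop. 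As a result your recursion $M_k\le (1-\delta)M_{k-1}+c\,u(x_0)$ is not justified; the coefficient of $u(x_0)$ hides an integral of $u$ over all of $\mathbb{R}^d$ that you have not controlled.

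The device the paper (following \cite{cskv}) actually uses is different in two ways. First, the splitting is not at scale $r$ but at the intermediate scale $r^{1-\gamma}\rho_Q(x)^{\gamma}$ with $0<\gamma<\alpha/(d+\alpha)$: one sets $B_1=B(x,r^{1-\gamma}\rho_Q(x)^{\gamma})$ and decomposes $u(x)$ according to $\{X_{\tau_0}\in B_1\}$ versus $\{X_{\tau_0}\notin B_1\}$. Second, and this is the point you are missing, the far contribution $\mathbb{E}^x[u(X_{\tau_0});X_{\tau_0}\notin B_1]$ is controlled by applying the L\'evy system \emph{at the reference point $x_0$} to obtain a lower bound
\[
u(x_0)\;\ge\;c\,\rho_Q(x_0)^2\int_{|y-x_0|>2\rho_Q(x_0)/3}|x_0-y|^{-d-\alpha}u(y)\,dy,
\]
which yields an upper bound on the integral of $u$ over the distant region in terms of $u(x_0)$ itself, with no harmonicity of $u$ outside $B\cap B(Q,r)$ required. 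After comparing $|x-y|$ with $|x_0-y|$ (the intermediate scale is chosen precisely so that $|x_0-y|\le 2(r/\rho_Q(x))^{\gamma}|x-y|$ on $B_1^c$), this gives $\mathbb{E}^x[u(X_{\tau_0});X_{\tau_0}\notin B_1]\le u(x_0)$ once $\rho_Q(x)<\eta r$. The paper then finishes not by your $M_k$-recursion but by a contradiction argument: assuming $u(x_1)\ge K$ for large $K$, one produces a sequence $x_k$ with $u(x_{k+1})\ge u(x_k)/(1-\delta^2)$ that stays in $B\cap B(Q,r/6)$, contradicting boundedness.
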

\begin{proof}
Since $r < R_1/2,$ by the Harnack inequality and a chain argument, it
is sufficient to prove \eqref{estimate} for $x\in B\cap B(Q, r/12)$
and $\tilde{x_0} = \tilde{Q}.$ We may also normalize so that $u(x_0) =
1.$ In the following proof, all the constants $\delta, \beta, \eta,$
and $c_i$ are always independent of $r.$ First choose $0 < \gamma <
\alpha/(d + \alpha)$ and let
$$ B_0 = B\cap B(x, 2\rho_Q(x)), \quad B_1 = B(x, r^{1-\gamma}\rho_Q(x)^{\gamma}).$$
\noindent We then set 
$$B_2 = B(x_0, \rho_Q(x)/3), \quad B_3 = B(x_0, 2\rho_Q(x)/3)$$
and 
$$\tau_0 = \inf\{t>0 : X_t\notin B_0\}, \quad \tau_2 = \inf\{t>0 : X_t\notin B_2\}.$$
By Lemma \ref{hmeasure}, there exists $\delta = \delta(R_1, M_1)$ such that 
\begin{equation}\label{1a}
\mathbb{P}^x(X_{\tau_0}\in B^c)\geq\delta, \quad x\in B(Q, r/4).
\end{equation}
By the Harnack inequality and a chain argument, there exists $\beta$ such that 
\begin{equation}\label{2}
u(x) < \left(\frac{\rho_Q(x)}{r}\right)^{-\beta}u(x_0), \quad x\in B(Q, r/4).
\end{equation}
Since $u$ is $\mathcal L$-harmonic in $B_0,$ we may write 
\begin{equation}\label{3}
u(x) = \mathbb{E}^x\left(u(X_{\tau_0}); X_{\tau_0}\in B_1\right) + \mathbb{E}^x\left(u(X_{\tau_0}); X_{\tau_0}\notin B_1\right), \quad x\in B(Q, r/4).
\end{equation}
We will assume the following Lemma and complete the proof of the Theorem.
\begin{lemma} \label{4lemma} There exists $\eta >0$ such that 
\begin{equation}\label{4}
\mathbb{E}^x\left(u(X_{\tau_0}); X_{\tau_0}\notin B_1\right)\leq u(x_0) \hspace{0.07in}\mbox{if}\hspace{0.05in} x\in B\cap B(Q, r/12), \mbox{ and }  \rho_Q(x) < \eta r. 
\end{equation}
\end{lemma}

We will prove the Carleson estimate by contradiction. Recall that
$u(x_0) = 1.$ Suppose that there exists $x_1\in B(x, r/12)$ such that
\begin{equation}\label{ce1}u(x_1)\geq K > \eta^{-\beta}\vee (1+\delta^{-1}),\end{equation}
where $K$ is a constant that will be specified later. By \eqref{2} and the assumption that $u(x_1)\geq K > \eta^{-\beta},$ we have $(\rho_Q(x_1)/r)^{-\beta} >
u(x_1)\geq K > \eta^{-\beta}.$ Hence $\rho_Q(x_1) < \eta r.$ Let $B_0,
B_1$ and $\tau_0$ now be defined with respect to the point $x_1$
instead of $x.$ Then by \eqref{3}, \eqref{4} and $K> (1+\delta^{-1}),$
we have
$$K\leq u(x_1)\leq\mathbb{E}^{x_1}\left(u(X_{\tau_0}); X_{\tau_0}\in B_1\right) +
1,$$ and hence, using \eqref{ce1},
$$\mathbb{E}^{x_1}\left(u(X_{\tau_0}); X_{\tau_0}\in B_1\right)\geq u(x_1) -1 >
\frac{1}{1+\delta}u(x_1). $$ If
$K\geq 2^{\frac{\beta}{\gamma}},$ then $B^c\cap B_1\subset B^c\cap
B(Q, r).$ By using the assumption that $u=0$ on $B^c\cap B(Q, r)$ and \eqref{1a}we have
\begin{eqnarray}
\mathbb{E}^{x_1}\left(u(X_{\tau_0}); X_{\tau_0}\in B_1\right)&=& \mathbb{E}^{x_1}\left(u(X_{\tau_0}); X_{\tau_0}\in B_1\cap B\right)\nonumber\\
&\le& \mathbb{P}^{x}(X_{\tau_0}\in B)\sup_{x \in B \cap B_1}u \nonumber\\
&\le& (1-\delta)\sup_{x \in B \cap B_1}u.
\end{eqnarray}
\noindent Therefore, $\sup_{x \in B \cap B_1}u(x) > u(x_1)/((1+\delta)(1-\delta)),$ i.e, there exists a point $x_2\in B$ such that 
$$\mid x_2- x_1\mid \leq r^{1-\gamma}\rho_Q(x_1)^{\gamma}\hspace{0.05in}\mbox{and}\hspace{0.05in} u(x_2) > \frac{1}{1-\delta^2}u(x_1)\geq\frac{1}{1-\delta^2}K.$$
\noindent By induction, if $x_k\in B\cap B(Q, r/12)$ with $u(x_k)\geq K/(1-\delta^2)^{k-1}$ for $k\geq 2,$ then there exists $x_{k+1}\in B$ such that 
\begin{equation}\label{11}
\mid x_k- x_{k+1}\mid \leq
r^{1-\gamma}\rho_Q(x_1)^{\gamma}\hspace{0.05in}\mbox{and}\hspace{0.05in}
u(x_{k+1}) > \frac{1}{1-\delta^2}u(x_k)\geq\frac{1}{(1-\delta^2)^k}K.
\end{equation}
From \eqref{2} and \eqref{11}, it follows that $\rho_Q(x_k)/r\leq (1-\delta^2)^{(k-1)/\beta)}K^{-1/\beta}$ for every $k\geq 1.$ Therefore,
\begin{eqnarray}\label{12}
\mid x_k- Q\mid &\le& \mid x_1- Q\mid  + \sum_{j=1}^{k-1}\mid x_{j+1}- x_j\mid \leq r/12 + \sum_{j=1}^{\infty}r^{1-\gamma}\rho_Q(x_j)^{\gamma} \nonumber\\
&\le& r/12 + r^{1-\gamma}\sum_{j=1}^{\infty}(1-\delta^2)^{(j-1)\gamma/\beta}K^{-\gamma/\beta}r^{\gamma}\nonumber\\
&=& r/12  + r^{1-\gamma}r^{\gamma}K^{-\gamma/\beta}\sum_{j=1}^{\infty}(1-\delta^2)^{(j-1)\gamma/\beta}\nonumber\\
&=& r/12 + rK^{-\gamma/\beta}\dfrac{1}{1-(1-\delta^2)^{\gamma/\beta}}. 
\end{eqnarray}
Choose 
$$ K = \eta\vee(1+\delta^{-1})\vee
12^{\beta/\gamma}(1-(1-\delta^2)^{\gamma/\beta})^{-\beta/\gamma}.$$
Then $K^{-\gamma/\beta}(1-(1-\delta^2)^{\gamma/\beta})^{-1}\leq 1/12,$
and hence $x_k\in B\cap B(Q, r/6)$ for every $k\geq 1.$ Since
$\lim_{k\to\infty}u(x_k) = +\infty,$ this contradicts the fact that
$u$ is bounded on $B(Q,r/2).$ This proves that $u(x) < K$ for every
$x\in B\cap B(Q,r/12)$ and completes the proof of the theorem.
\end{proof}
We now provide the proof of the lemma.

\vspace{0.1in}

\noindent \textbf{Proof of Lemma \ref{4lemma}:}
Let $\eta_0 = 2^{-2(d+\alpha)/d}.$ Then for $\rho_Q(x) < \eta_0 r,$
$$ \left(\rho_Q(x)\right)^{d/(\alpha + d)} < 1/4, \hspace{0.05in}\mbox{and}\hspace{0.05in} 2\rho_Q(x)\leq r^{1-\gamma}\rho_Q(x)^{\gamma} - 2\rho_Q(x).$$
Thus if $x\in B\cap B(Q, r/12)$ with $\rho_Q(x) < \eta_0 r,$ then $\mid x-y\mid  < 2\mid z-y\mid $ for $z\in B_0,$ $y\notin B_1.$ Now using the Levy system formula from equation \eqref{levy}, and the upper bound on expected exit time from \eqref{foonL}, we have 
\begin{eqnarray}\label{5}
\lefteqn{\mathbb{E}^x\left(u(X_{\tau_0}); X_{\tau_0}\notin B_1\right)= c_1\int_{B_0}G_{B_0}(x, z)\int_{\mid y-x\mid  >  r^{1-\gamma}\rho_Q(x)^{\gamma}}J(z, y) u(y)dydz} \nonumber\\ &\leq&
c_2\int_{B_0}G_{B_0}(x, z)dz\int_{\mid y-x\mid  >
  r^{1-\gamma}\rho_Q(x)^{\gamma}}\mid z-y\mid^{-d-\alpha}u(y)dy
\nonumber\\ &\leq& 2^{d + \alpha}c_3\int_{B_0}G_{B_0}(x, z)dz\int_{\mid y-x\mid 
  > r^{1-\gamma}\rho_Q(x)^{\gamma}}\mid x-y\mid^{-d-\alpha}u(y)dydz
\nonumber\\ &\leq& c_4 \mathbb{E}^x\left(\tau_{B(x,
    2\rho_Q(x))}\right)\int_{\mid y-x\mid  >
  r^{1-\gamma}\rho_Q(x)^{\gamma}}\mid x-y\mid^{-d-\alpha}u(y)dy
\nonumber\\
&\leq&c_5\rho_Q(x)^2(I_1+ I_2),
\end{eqnarray}
with 
$$  I_1 = \int_{\mid y-x\mid  >
  r^{1-\gamma}\rho_Q(x)^{\gamma},
  \mid y-x_0\mid >2\rho_Q(x_0)/3}\mid x-y\mid^{-d-\alpha}u(y)dy$$  and  $$I_2 \int_{\mid y-x\mid \leq
  2\rho_Q(x_0)/3}\mid x-y\mid^{-d-\alpha}u(y)dy.$$ 
On the other hand, for $z\in B_2,$ and $y\notin B_3,$ we have $$\mid
z-y\mid \leq \mid z-x_0\mid + \mid x_0-y\mid \leq\rho_Q(x_0)/3 + \mid
x_0-y\mid \leq 2\mid x_0-y\mid .$$ Now we apply the Levy system formula
and the bound in equation \eqref{foonLL} to obtain
\begin{eqnarray}\label{6}
u(x_0)&\ge& \mathbb{E}^x\left(u(X_{\tau_2}), X_{\tau_2}\notin
  B_3\right) \nonumber\\ &\ge& c_6\int_{B_2}G_{B_2}(x_0,
z)\int_{\mid y-x_0\mid >2\rho_Q(x_0)/3}\mid z-y\mid^{-d-\alpha}u(y)dydz
\nonumber\\ &\ge& 2^{-d-\alpha}c_7\int_{B_2}G_{B_2}(x_0,
z)dz\int_{\mid y-x_0\mid >2\rho_Q(x_0)/3}\mid x_0-y\mid^{-d-\alpha}u(y)dy
\nonumber\\ &\ge&
2^{-d-\alpha}c_8(\rho_Q(x_0)/3)^2\int_{\mid y-x_0\mid >2\rho_Q(x_0)/3}\mid x_0-y\mid^{-d-\alpha}u(y)dy
\nonumber\\ &=&
c_9\rho_Q(x_0)^2\int_{\mid y-x_0\mid >2\rho_Q(x_0)/3}\mid x_0-y\mid^{-d-\alpha}u(y)dy.
\end{eqnarray}
We shall use (\ref{6}) to estimate $I_1$. Now suppose that $\mid y-x\mid  > r^{1-\gamma}\rho_Q(x)^{\gamma}$ and $x\in B(Q, r/4).$ Then, 
$$\mid y-x_0\mid \leq \mid y-x\mid  + r\leq \mid y-x\mid  + r^{\gamma}\rho_Q(x)^{-\gamma}\mid y-x\mid \leq 2r^{\gamma}\rho_Q(x)^{-\gamma}\mid y-x\mid .$$
Therefore, 
\begin{eqnarray}\label{7}
I_1&=& \int_{\mid y-x\mid  > r^{1-\gamma}\rho_Q(x)^{\gamma},
  \mid y-x_0\mid >2\rho_Q(x_0)/3}\mid x-y\mid^{-d-\alpha}u(y)dy \nonumber\\ &\le&
\int_{\mid y-x_0\mid >2\rho_Q(x_0)/3}(2^{-1}(\rho_Q(x)/r)^{\gamma})^{-d-\alpha}\mid y-x_0\mid^{-d-\alpha}u(y)dy
\nonumber\\ &=&
2^{d+\alpha}(\rho_Q(x)/r)^{-\gamma(d+\alpha)}\int_{\mid y-x_0\mid >2\rho_Q(x_0)/3}\mid y-x_0\mid^{-d-\alpha}u(y)dy\nonumber\\ &\le&c_{10}
2^{d+\alpha}(\rho_Q(x)/r)^{-\gamma(d+\alpha)}\rho_Q(x_0)^{-2}u(x_0)
\nonumber\\ &=&
c_{11}(\rho_Q(x)/r)^{-\gamma(d+\alpha)}\rho_Q(x_0)^{-2}u(x_0),
\end{eqnarray}
where the last inequality above is due to \eqref{6}. If $\mid y-x_0\mid  < 2\rho_Q(x_0)/3,$ then $$\mid y-x\mid \geq \mid x_0-Q\mid -\mid x-Q\mid -\mid y-x_0\mid  > \rho_Q(x_0)/6.$$ This combined with the Harnack inequality (Proposition \ref{hq0}) gives
\begin{eqnarray}\label{8}
I_2&=& \int_{\mid y-x\mid \leq 2\rho_Q(x_0)/3}\mid x-y\mid^{-d-\alpha}u(y)dy \nonumber\\
   &\le& c_{12}\int_{\mid y-x\mid \leq 2\rho_Q(x_0)/3}\mid x-y\mid^{-d-\alpha}u(x_0)dy \nonumber\\
   &\le& c_{13}u(x_0)\int_{\mid y-x\mid \geq 2\rho_Q(x_0)/6}\mid x-y\mid^{-d-\alpha}dy \nonumber\\
   &=& c_{14}(\rho_Q(x_0))^{-\alpha}u(x_0).
\end{eqnarray}
Combining \eqref{5},\eqref{7}, and \eqref{8} we obtain 
\begin{eqnarray}\label{9}
\lefteqn{\mathbb{E}^x\left(u(X_{\tau_0}); X_{\tau_0}\notin B_1\right)}\nonumber\\
&\le&
  c_{15}\rho_Q(x)^2\left(c_{16}(\rho_Q(x)/r)^{-\gamma(d+\alpha)}\rho_Q(x_0)^{-2}u(x_0)
  + c_{17}(\rho_Q(x_0))^{-\alpha}u(x_0)\right)\nonumber\\ &\le&
  c_{18}u(x_0)\left((\rho_Q(x)/r)^{2-\gamma(d+\alpha)}+
  \rho_Q(x)^2r^{-\alpha}\right), \nonumber\\
\end{eqnarray}
where in the last inequality we used the fact that $\rho_Q(x_0) = r/2.$ Now we choose $\eta\in (0, \eta_0)$ so that  $$c_{18}\left(\eta^{2-\gamma(d-\alpha)} + \eta^2\right)\leq 1.$$
Then for $x\in B\cap B(Q, r/12)$ with $\rho_Q(x) < \eta r,$ we have by \eqref{9}
\begin{eqnarray*}
\mathbb{E}^x\left(u(X_{\tau_0}); X_{\tau_0}\notin B_1\right)&\le&
c_{18}u(x_0)\left(\eta^{2-\gamma(d-\alpha)} + \eta^2 r^{2-\alpha}\right)
\nonumber\\ &\le& c_{18}\left(\eta^{2-\gamma(d-\alpha)} + \eta^2
\right)u(x_0)\leq u(x_0),
\end{eqnarray*}
which proves the result.
\qed

\subsection{Boundary Harnack Principle}
We shall first provide estimates for exit probabilities near the
boundary. For this we shall consider the same for a truncated
process. We define the truncated process $\widehat X$ to be the
process with the same diffusion component as $X$ but with the jump
kernel to be $J(x,y)1_{\{\mid x-y\mid < 1\}}.$ That is the jump sizes
are restricted to be strictly smaller than $1.$ The corresponding exit times
will be denoted by $\widehat\tau.$
\begin{lemma}\label{bdryest}
There exist positive constants $\delta_0 = \delta_0(R_1, M_1, \alpha),$ $c_1 = c_1(R_1, M_1, \alpha)$
and, $c_2 = c_2(R_1, M_1, \alpha)$ such that for every $Q\in\partial B,$
and $x\in D_Q(2\delta_0, r_0)$ with $\tilde x = 0,$
\begin{equation}\label{lbd}
\mathbb{P}^x\left(\widehat X_{\widehat\tau_{D_Q(\delta_0, r_0)}}\in D_Q(2\delta_0, r_0)\right)\geq c_1\delta_B(x), \mbox{ and}
\end{equation}
\begin{equation}\label{ubd}
\mathbb{P}^x\left(\widehat X_{\widehat\tau_{D_Q(\delta_0, r_0)}}\in B\right)\leq c_2\delta_B(x). 
\end{equation}
\end{lemma}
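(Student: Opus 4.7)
Plan:

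The proof adapts the corresponding arguments from \cite{cskv} that the authors explicitly state they are imitating. The overall strategy is, for each of the two bounds, to construct a nonnegative test function $h$ on $\mathbb{R}^d$ that is nearly super- or subharmonic with respect to the truncated generator $\widehat{\mathcal L}$ of $\widehat X$, and then apply Dynkin's formula to $h(\widehat X_{t\wedge\widehat\tau})$ with $\widehat\tau = \widehat\tau_{D_Q(\delta_0, r_0)}$.

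First, it suffices to treat $x \in D_Q(\delta_0, r_0)$: for $x$ in the annulus $D_Q(2\delta_0, r_0)\setminus D_Q(\delta_0, r_0)$ the stopping time is zero and both estimates hold trivially once $\delta_0$ is fixed. For $x = (0, x_d)$ in the smaller box, the $C^{1,1}$ character of $\phi$ (with $\phi(0) = 0$, $\nabla \phi(0) = 0$) gives $\delta_B(x) \asymp x_d = \rho_Q(x)$, with constants depending only on $R_1, M_1$. Lemma~\ref{foon2} also provides $\mathbb{E}^x \widehat\tau \leq c \delta_0^2$, which we will use to control time integrals.

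For the upper bound \eqref{ubd}, I would build $h : \mathbb R^d \to [0, \infty)$ with the properties: (i) $h \equiv 0$ on $B^c$; (ii) $h(y) \leq C \rho_Q(y)$ for $y \in D_Q(2\delta_0, r_0)$ so that $h(x) \leq C \delta_B(x)$; (iii) $h(y) \geq c_0 \delta_0$ on all points of $B$ reachable via the exit (i.e., $y \in \partial D_Q(\delta_0, r_0) \cap B$ or $y \in B$ at distance $\geq\delta_0$ from $\partial B$); and crucially (iv) $\widehat{\mathcal L} h \leq 0$ on $D_Q(\delta_0, r_0)$. A candidate is a smoothed version of $\rho_Q \wedge \delta_0$ multiplied by a smooth tangential cutoff, tuned as in \cite{cskv}. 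Then Dynkin's formula gives
$$c_0 \delta_0 \, \mathbb{P}^x(\widehat X_{\widehat\tau} \in B) \leq \mathbb{E}^x h(\widehat X_{\widehat\tau}) \leq h(x) \leq C \delta_B(x),$$
yielding \eqref{ubd}. For the lower bound \eqref{lbd}, I would mirror the construction with a function $\widetilde h$ that is nearly $\widehat{\mathcal L}$-subharmonic in $D_Q(\delta_0, r_0)$, comparable to $\rho_Q$ inside the box, bounded above by $C \delta_0$ on $D_Q(2\delta_0, r_0)$, and decaying sharply beyond so that jump contributions from outside the larger box are negligible; then Dynkin's formula reversed in direction yields $c_1 \delta_B(x) \leq \widetilde h(x) \leq C \delta_0 \, \mathbb{P}^x(\widehat X_{\widehat\tau} \in D_Q(2\delta_0, r_0))$. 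An alternative probabilistic route is to couple $\widehat X$ with its pure-diffusion component up to the first jump time, use classical divergence-form harmonic measure estimates in Lipschitz domains (Fabes--Stroock) to obtain the exit probability $\geq c \rho_Q(x)/\delta_0$ from the top for the diffusion alone, and control the loss due to jumps of size $<1$ using the bounded jump intensity away from the diagonal.

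The main technical obstacle is step (iv), namely showing that the barrier $h$ is (near) super-$\widehat{\mathcal L}$-harmonic. The diffusion part $\mathcal L_c h$ is routine since $h$ can be taken $C^2$ away from $\partial B$ with bounded Hessian and $a_{ij}$ is uniformly elliptic and bounded. The difficulty is the truncated jump part
$$\widehat{\mathcal L}_j h(y) = \int_{|z-y|<1}[h(z) - h(y)]\,J(y,z)\,dz,$$
which, because $h$ behaves like a distance function to $\partial B$ and $\alpha$ may exceed $1$, must be handled via symmetry-based cancellation of the linear term in the Taylor expansion of $h$ around $y$ against the (locally) comparable $|y-z|^{-d-\alpha}$ weight. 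This is precisely the analytic heart of the constructions in \cite{cskv}, and my plan is to reuse their explicit choice of barrier together with the associated estimates. Once such an $h$ is in hand, both \eqref{lbd} and \eqref{ubd} follow by the Dynkin-formula computations sketched above.
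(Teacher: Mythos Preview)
Your overall architecture---barrier functions plus Dynkin/optional stopping, with the truncated process $\widehat X$---is exactly the paper's approach. The gap is in the barrier itself. A function behaving like $\rho_Q\wedge\delta_0$ (times a tangential cutoff) is essentially linear in $\rho_Q$ near $\partial B$; for such an $h$ the diffusion part satisfies $L_c h = -L_c\phi$, which is merely bounded, and the truncated jump part $\widehat L_d h$ is bounded as well (this is what the symmetry/cancellation argument you describe actually delivers) but has no definite sign. You therefore cannot force $\widehat{\mathcal L}h\le 0$ or $\ge 0$ on $D_Q(\delta_0,r_0)$ just by shrinking $\delta_0$. In addition, on the lateral exit set $\{|\tilde y|=r_0\}\cap B$ the height $\rho_Q$ can be arbitrarily small, so your requirement ``$h\ge c_0\delta_0$ on all exit points in $B$'' is incompatible with ``$h\le C\rho_Q$''.

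The device the paper (following \cite{cskv}) uses, and which your proposal is missing, is to correct the linear barrier by a power $h_p(y)=\rho_Q(y)^p$ with $1<p<2\wedge(3-\alpha)$. Uniform ellipticity then gives $L_c h_p(y)\ge c\,\rho_Q(y)^{p-2}\to+\infty$ as $y\to\partial B$, while \cite{cskv}, Corollary~3.3, bounds the jump contribution so that $(L_c+\widehat L_d)h_p\ge \tfrac{c}{2}\rho_Q^{p-2}$ near the boundary. This blow-up dominates the $O(1+|\log\rho_Q|+\rho_Q^{(1-\alpha)\wedge 0})$ contributions of $h$ and of the auxiliary tangential function $\psi$. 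The actual barriers are $u_1=h+h_p$ (for which $(L_c+\widehat L_d)u_1\ge 0$, giving the submartingale and hence \eqref{lbd}) and $u_2=h+\psi-h_p$ (for which $(L_c+\widehat L_d)u_2\le -1$, giving both $\mathbb E^x\widehat\tau\le\rho_Q(x)$ and \eqref{ubd}); here $\psi$ is chosen with $\psi\ge 2^{p+1}$ for $|y|\ge r_0/2$, which is precisely what makes $u_2$ bounded below on the lateral exits. A final L\'evy-system comparison is then used to pass from exits into $D(\infty,r_0)$ to exits into $D(2\delta_0,r_0)$. Your plan becomes correct once you replace the linear barrier by these $u_1,u_2$.
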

\begin{proof}
Without loss of generality, assume that $Q = 0.$ Let $p>0$ be such that $p\neq\alpha$ and $1 < p < (2\wedge 3-\alpha).$ Recall from Notation \ref{not1}, that for $y \in B$, $\rho(y) = y_d - \phi(\tilde y)$ and $D(r_1, r_2) = \{y\in B: 0 < \rho(y) < r_1, \mid \tilde y\mid  < r_2\}.$ Define for $y \in B$,
$$h(y) = \rho(y)1_{B(0, R_0)\cap B},\hspace{0.1in} h_p(y) = h(y)^p,$$  
Since $\rho(y)\leq\sqrt{1+ M_1^2}\delta_B(y),$ we have
$0\leq h(y)\leq 1.$ Also observe that $D(r_1, r_2)$ is contained in
$B\cap B(0, R_1/4)$ for every $r_1, r_2\leq r_0.$ Let $\widehat L_d$ denote the integral term in the operator $\mathcal L$ but with the truncated kernel $J(x, y)1_{\{|x-y|<1\}}.$ Let $$L_cf(x)
=\frac12 \sum_{i=1}^d \sum_{j=1}^d \frac{\partial}{\partial x_i}
\left(a_{ij}(x) \frac{\partial f}{\partial x_j}\right)(x) $$ denote
the diffusion part of the operator $\mathfrak L.$ Applying the product
rule in $L_c,$ we will get two kinds of terms, one involving first
order derivatives of $a_{ij}$ and $f$ and the other involving $a_{ij}$
and second order derivatives of $f.$ We denotes these by $L_c^1$ and
$L_c^2$ respectively. For every $y\in B(0, R_0)\cap B,$ we have
\begin{equation}\label{L1a}
L_ch(y) = -L_c\phi(\tilde y),
\end{equation}
Next, a routine but tedious computation gives $L_c^2h_p(y) = I - II,$ where  
\begin{eqnarray}\label{L2}
I &=& p(p-1)h^{p-2}(y)\big[1+ \sum_{i=1}^{d-1}a_{ii}(y)(\partial_{i}\phi(\tilde y))^2\nonumber \\ &&
 + \sum_{1\leq i, j\leq d-1, i\neq j}a_{ij}(y)\partial_{i}\phi(\tilde y)\partial_{j}\phi(\tilde y) - 2\sum_{i=1}^{d-1}a_{id}(y)\partial_{i}\phi(\tilde y)\big]\nonumber\\
\end{eqnarray}
and
\begin{eqnarray}\label{L3}
II &=& ph^{p-1}(y)\big[\sum_{i=1}^{d-1}a_{ii}(y)\partial_{ii}\phi(\tilde y) + \sum_{1\leq i, j\leq d-1, i\neq j}a_{ij}(y)\partial_{ij}\phi(\tilde y)\big]
\end{eqnarray}
Using Assumption (A), the term
inside the square brackets in $I$ can be bounded from below by
$\lambda(1 + \mid \nabla\phi(\tilde y)\mid^2).$ Also observe that the term inside
the square brackets in $II$ is just $L_c^2\phi(\tilde y),$ so
that $II = ph^{p-1}(y)L_c^2\phi(\tilde y).$ Combining the previous two observations, we obtain
\begin{equation}\label{L4}
L_c^2h_p(y)\geq\lambda p(p-1)h^{p-2}(y)(1 + \mid \nabla\phi(\tilde y)\mid^2) - ph^{p-1}(y)L_c^2\phi(\tilde y).
\end{equation}
Now we take care of the first order term $L_c^1.$ Another routine computation gives 
$$L_c^1h_p(y) = -ph^{p-1}(y)\left[\sum_{1\leq i, j\leq d-1}(\partial_{i}a_{ij}(y))(\partial_{j}\phi(\tilde y))   -\sum_{i=1}^{d}\partial_{i}a_{id}(y) + \sum_{j=1}^{d}\partial_{d}a_{dj}(y)\partial_{j}\phi(\tilde y)\right].$$
 The first term in the square bracket above is nothing but
$L_c^1\phi(\tilde y),$ and also note that the second and third terms are
bounded because $a_{ij}$ and $\phi$ have bounded first
derivatives. So we may now write
\begin{equation}\label{L5}
L_c^1h_p(y) = -ph^{p-1}(y)L_c^1\phi(\tilde y) + ph^{p-1}(y)O(1)
\end{equation}
Adding equations \eqref{L4} and \eqref{L5}, we get
\begin{eqnarray}\label{L6}
L_ch_p(y) &=& L_c^2h_p(y) + L_c^1h_p(y)\nonumber\\
          &\ge & \lambda p(p-1)h^{p-2}(y)(1 + (\nabla\phi(\tilde y))^2)\\ &&
           - ph^{p-1}(y)L_c\phi(\tilde y)+ ph^{p-1}(y)O(1).
\end{eqnarray}
It now follows that $\delta_1$ maybe chosen sufficiently small that 
\begin{equation}\label{L7}
L_ch_p(y)\geq c_1(\rho(y))^{p-2} > 0
\end{equation}
for $y\in D(\delta_1, r_0)$ and appropriate constant $c_1 >0.$
We will divide the rest of the proof into three steps.

{\bf Step 1.} \emph{Constructing suitable superharmonic and
  subharmonic functions with respect to $L_c + \widehat L_d.$ } Let
$\psi$ be a smooth positive function in $\mathbb{R}^d$ with bounded
first and second order partial derivatives such that $\psi(y) =
2^{p+1}\frac{\mid \tilde y\mid^2}{r_0^2}$ for $\mid y\mid  < r_0/4$ and
$2^{p+1}\leq\psi(y)\leq 2^{p+2},$ for $\mid y\mid  > r_0/2.$ We now define
$$ u_1(y) = h(y) + h_p(y)$$
and
$$ u_2(y) = h(y) + \psi(y) - h_p(y)$$
Note that both $u_1$ and $u_2$ are non-negative because $0\leq h\leq 1$ and $p\geq 1.$ By a Taylor expansion with remainder of order $2,$ 
\begin{equation}\label{L8}
\mid (L_c + \widehat L_d)\psi(y)\mid \leq \mid L_c\psi(y)\mid  + \mid \widehat L_d\psi(y)\mid \leq c_2(\alpha) < \infty.
\end{equation}
Note that our jump kernel $J(x, y)$ is uniformly bounded above and
below by upto a constant by $\mid x-y\mid^{d+\alpha},$ so we have by [\cite{cskv} Corollary
  $3.3$], that there exist $c_3 = c_3(R_1, M_1)$ and $\delta_2 =
\delta_2(R_1, M_1)\in (0, \delta_1]$ such that
$$\widehat L_dh_p(y)\geq -c_3\hspace{0.1in}\mbox{for}\hspace{0.05in}y\in D(\delta_2, r_0).$$
Then using (\ref{L7}), the fact that $p<2,$ and the above inequality, we obtain (choosing $\delta_2$ smaller if need be)
\begin{equation}\label{L9}
(L_c + \widehat L_d)h_p(y)\geq c_1\rho(y)^{p-2} - c_3\geq \frac{c_1}{2}\rho(y)^{p-2},
\end{equation}
for $y\in D(\delta_2, r_0).$ Making use of [\cite{cskv} Corollary
  $3.3$] again and (\ref{L1a}), there exist $c_4$ and $\delta_3\in (0,
\delta_2)$ such that for all $y\in D(\delta_3, r_0)$
\begin{equation}\label{L10}
\mid (L_c + \widehat L_d)h(y)\mid \leq c_4\big(1 + \rho(y)^{(1-\alpha)\wedge 0} + 1_{\{\alpha =1\}}\mid \log\rho(y)\mid \big)
\end{equation}
Thus by \eqref{L8}-\eqref{L10} and the fact that $p < 2\wedge(3-\alpha),$ there exists $\delta_4\in (0, \delta_3)$ such that 
\begin{equation}\label{L11}
(L_c + \widehat L_d)u_2(y)\leq c_2 + c_4\big(1 + \mid \log\rho(y)\mid  + \rho(y)^{(1-\alpha)\wedge 0}\big) - \frac{c_1}{2}\rho(y)^{p-2}\leq -1
\end{equation}
for $y\in D(\delta_4, r_0).$
On the other hand, the lower bound from [\cite{cskv} Corollary $3.3$] gives
\begin{equation}\label{L12}
(L_c + \widehat L_d)h(y)\geq -\mid \mid L_c\phi\mid \mid _{\infty} - c_5(1 + \mid \log\rho(y)\mid )
\end{equation}
for all $y\in D(\delta_4, r_0).$ Combining \eqref{L12} with
\eqref{L9} and choosing $\delta_4$ smaller if necessary, we obtain
that for $y\in D(\delta_4, r_0),$
\begin{equation}\label{L13}
(L_c + \widehat L_d)u_1(y)\geq -\mid \mid L_c\phi\mid \mid _{\infty} - c_5(1 + \mid \log\rho(y)\mid )+ \frac{c_1}{2}\rho(y)^{p-2}\geq 0.
\end{equation}

{\bf Step 2.} \emph{From sub/super- harmonic functions to
  sub/super-martingale properties} We claim that the estimates
\eqref{L11} and \eqref{L13} imply that
\begin{equation}\label{L14}
t\rightarrow u_2(\widehat X_{t\wedge\widehat\tau_{D(\delta_4, r_0)}})\hspace{0.1in}\mbox{is a bounded supermartingale,}\hspace{0.1in}
\end{equation}
\begin{equation}\label{L15}
\mathbb{E}^x\left(\widehat\tau_{D(\delta_4, r_0)} \right)\leq\rho(x),
\end{equation}
and 
\begin{equation}\label{L16}
t\rightarrow u_1(\widehat X_{t\wedge\widehat\tau_{D(\delta_4, r_0)}})\hspace{0.1in}\mbox{is a bounded submartingale.}\hspace{0.1in}
\end{equation}
Note that if $v$ is a bounded $C^2$ function in $\mathbb{R}^d$ with
bounded first and second order partial derivatives, then an
application of Ito's formula and the Levy system \eqref{levy} gives
\begin{equation}\label{L17}
M_t^v = v(\widehat X_t) - v(\widehat X_0) - \int_{0}^{t}(L_c + \widehat L_d)v(\widehat X_s)ds
\end{equation}
is a martingale. If the functions $u_1$ and $u_2$ were $C^2$ with
bounded derivatives, then the above claims would just follow from
(\ref{L17}), (\ref{L11}), and (\ref{L13}). Since they are truncated outside of $B(0, R_0)\cap B$ the functions are not in $C^2$. So we will proceed by using a mollifier. Let $g$ be a non-negative
smooth radial function with compact support in $\mathbb{R}^d$ such
that $g(x) = 0$ for $\mid x\mid  > 1,$ and $\int_{\mathbb{R}^d}g(x)dx =1.$ For
$k\geq 1,$ denote $g_k(x) = 2^{kd}g(2^kx).$ Define for $i = 1, 2,$
$$u_i^k(z) = g_k*u_i(z):= \int_{\mathbb{R}^d}g_k(y)u_i(z-y)dy.$$ Since
$(L_c + \widehat L_d)u_i^k = g_k*(L_c + \widehat L_d)u_i,$ we have by
(\ref{L11}) and (\ref{L13}) that $(L_c + \widehat L_d)u_1^k\geq 0$ and
$(L_c + \widehat L_d)u_2^k\leq -1,$ on $D_k(\delta_4, r_0) = \{y:
2^{-k} < \rho(y) < \delta_4 - 2^{-k}, \mid \tilde y\mid  < r_0 - 2^{-k}\}.$
Since $u_i^k$ $i= 1,2$ are bounded smooth functions with bounded first
and second order partials bounded, equation (\ref{L17}) tells us that
$$t\rightarrow u_2^k(\widehat X_{t\wedge\widehat\tau_{D_k(\delta_4, r_0)}}) + t\wedge\widehat\tau_{D_k(\delta_4, r_0)}$$
is a positive supermartingale and similarly that 
$$t\rightarrow u_1^k(\widehat X_{t\wedge\widehat\tau_{D_k(\delta_4, r_0)}})$$
is a bounded submartingale. Since $u_i$ are bounded and continuous, $u_i^k$ converge uniformly to $u_i.$ Thus 
\begin{equation}\label{L18}
t\rightarrow u_2(\widehat X_{t\wedge\widehat\tau_{D_k(\delta_4, r_0)}}) + t\wedge\widehat\tau_{D_k(\delta_4, r_0)} \hspace{0.1in}\mbox{is a positive supermartingale}\hspace{0.05in}
\end{equation}
and $t\rightarrow u_1(\widehat X_{t\wedge\widehat\tau_{D_k(\delta_4,
    r_0)}})$ is a bounded submartingale. Since $D_k(\delta_4, r_0)$
increases to $D(\delta_4, r_0)$ we see that (\ref{L14}) and (\ref{L16})
hold. In addition, for each fixed $k\geq 1,$ and $t>0,$ we have from
(\ref{L18}) that
$$\mathbb{E}^x\left(u_2(\widehat X_{t\wedge\widehat\tau_{D_k(\delta_4, r_0)}}) + t\wedge\widehat\tau_{D_k(\delta_4, r_0)} \right)\leq u_2(x) $$
Since $u_2\geq 0,$ by first letting $k\to\infty$ and then $t\to\infty,$ we get $\mathbb{E}^x\left(\widehat\tau_{D(\delta_4, r_0)} \right)\leq u_2(x).$ Since $\tilde x =0, \psi(x) = 0$ and therefore $u_2(x)\leq\rho(x).$ This proves (\ref{L15}).

{\bf Step 3.} \emph{Deriving exit distribution estimates using
  sub/super-martingale property} Recall that $\psi\geq 2^{p+1}$ on
$\mid y\mid \geq r_0$ and $\psi(x) = 0.$ Therefore by \eqref{L14},
\begin{eqnarray}\label{L19}
\rho(x) &\ge & u_2(x) \nonumber\\
        &\ge & \mathbb{E}^x\left(u_2(\widehat X_{\widehat\tau_{D(\delta_4, r_0)}}); X_{\widehat\tau_{D(\delta_4, r_0)}}\in B\setminus{D(\infty, r_0)} \right) \nonumber\\
        &\ge & (2^{p+1}-1)\mathbb{P}^x\left(X_{\widehat\tau_{D(\delta_4, r_0)}}\in B\setminus{D(\infty, r_0)} \right)
\end{eqnarray}
From (\ref{L16}), we have
\begin{eqnarray}\label{L20}
\rho(x)\leq\rho(x) + \rho(x)^p &= & u_1(x)\nonumber\\
&\le & \mathbb{E}^x\left(u_1(\widehat X_{\widehat\tau_{D(\delta_4, r_0)}}) \right)\nonumber\\
&\le & 2\mathbb{P}^x\left(\widehat X_{\widehat\tau_{D(\delta_4, r_0)}}\in B\right).
\end{eqnarray}
Combining (\ref{L19}) and (\ref{L20}), we obtain 
\begin{eqnarray}\label{L21}
\mathbb{P}^x\left(\widehat X_{\widehat\tau_{D(\delta_4, r_0)}}\in
D(\infty, r_0)\right)& = & \mathbb{P}^x\left(\widehat
X_{\widehat\tau_{D(\delta_4, r_0)}}\in B\right) -
\mathbb{P}^x\left(X_{\widehat\tau_{D(\delta_4, r_0)}}\in
B\setminus{D(\infty, r_0)} \right)\nonumber\\ &\ge & \frac{2^{p+1} -
  3}{2(2^{p+1}-1)}\rho(x).
\end{eqnarray}
Now we use the Levy system (\ref{levy}) for $\widehat X$ to get 
\begin{eqnarray}\label{L22}
\lefteqn{\mathbb{P}^x\left(\widehat X_{\widehat\tau_{D(\delta_4, r_0)}}\in
D(\infty, r_0)\setminus D(2\delta_4, r_0)\right) }\nonumber\\ &\le
& c\mathbb{E}^x\left(\int_{0}^{\widehat\tau_{D(\delta_4, r_0)}}
  \int_{D(\infty, r_0)\setminus D(2\delta_4, r_0)
  }\frac{1_{\{\mid \widehat X_s - y\mid < 1\}}}{\mid \widehat X_s -
    y\mid^{d+\alpha}}dyds\right) \nonumber\\ &\le &
c\mathbb{E}^x\left(\int_{0}^{\widehat\tau_{D(\delta_4, r_0)}}
  \int_{D(\infty, r_0)\setminus D(2\delta_4, r_0) }\frac{1}{\mid \widehat
    X_s - y\mid^{d+\alpha}}dyds\right) \nonumber\\ &\le &
c_6\left(\int_{D(\infty, r_0)\setminus D(2\delta_4,
  r_0)}\mid y\mid^{-d-\alpha}dy
\right)\mathbb{E}^x\left(\widehat\tau_{D(\delta_4, r_0)}\right)
\nonumber\\ &\le & c_7\left(\int_{D(2\delta_4, r_0)\setminus
  D(3\delta_4/2, r_0)}\mid y\mid^{-d-\alpha}dy
\right)\mathbb{E}^x\left(\widehat\tau_{D(\delta_4, r_0)}\right)
\nonumber\\ &\le &
c_8\mathbb{E}^x\left(\int_{0}^{\widehat\tau_{D(\delta_4,
      r_0)}}\int_{D(2\delta_4, r_0)\setminus D(3\delta_4/2, r_0)
  }\frac{1_{\{\mid \widehat X_s - y\mid < 1\}}}{\mid \widehat X_s -
    y\mid^{d+\alpha}}dyds\right) \nonumber\\ & = &
c_9\mathbb{P}^x\left(\widehat X_{\widehat\tau_{D(\delta_4, r_0)}}\in
D(2\delta_4, r_0)\setminus D(3\delta_4/2, r_0) \right) .
\end{eqnarray}
Thus from (\ref{L21}) and (\ref{L22}), 
\begin{equation}\label{L23}
\mathbb{P}^x\left(\widehat X_{\widehat\tau_{D(\delta_4, r_0)}}\in D(2\delta_4, r_0)\right)\geq c_{10}\rho(x)\geq c_{11}\delta_B(x).
\end{equation}
Taking $\delta_0 = \delta_4$ and $c_1 = c_{11}$ gives the estimate \eqref{lbd}. To obtain \eqref{ubd}, first recall that $0\leq h_p\leq 1.$ If $|y| > r_0/2,$ then $\psi(y)\geq 2^{p+1}\geq 1.$ Therefore 
$$ u_2(y)= \psi(y) + h(y) - h_p(y)\geq \psi(y) - h_p(y)\geq 1, \hspace{0.05in}\mbox{for}\hspace{0.05in} y\in B(0, r_0/2)^c.$$
Note also that for $y\in B(0, R_0)$ such that $\delta_4\leq\rho(y) < R_0,$ 
$$u_2(y)= \psi(y) + h(y) - h_p(y)\geq\rho(y) - \rho(y)^p\geq c_{12},$$
where $c_{12}$ depends on $\delta_4$ and $R.$ Therefore using the subharmonicity, we obtain
\begin{equation}\label{ubdfinal}
\rho(x)\geq u_2(x)\geq\mathbb{E}^x\left(u_2(\widehat X_{\widehat\tau_{D(\delta_4, r_0)}}) \right)\geq c_{12}\mathbb{P}^x\left(\widehat X_{\widehat\tau_{D(\delta_4, r_0)}}\in B \right)
\end{equation}
Since $\rho(x)$ is comparable to $\delta_B(x)$ from above and below, we infer from \eqref{ubdfinal} that \eqref{ubd} is true (once again with $\delta_0 = \delta_4$). This finishes the proof of the lemma.
\end{proof}
We now state and prove the proposition that provides estimates for
exit probabilities near the boundary.
\begin{proposition}\label{bdryest2} There exist positive constants $\delta_0 = \delta_0(R_1, M_1, \alpha),$
 $c_1 \equiv c_1(R_1, M_1, \alpha)$ and $c_2 \equiv c_2(R_1, M_1, \alpha)$ such that for every $Q\in\partial B,$
and $x\in D_Q(2\delta_0, r_0)$ with $\tilde x = 0,$
\begin{eqnarray}\label{propl}
\mathbb{P}^x\left(X_{\tau_{D(\delta_0, r_0)}}\in D(2\delta_0, r_0)\right)&\geq& c_1\delta_B(x),\\
\label{propu}\mathbb{P}^x\left(X_{\tau_{D(\delta_0, r_0)}}\in B\right)&\leq& c_2\delta_B(x).
\end{eqnarray}
\end{proposition}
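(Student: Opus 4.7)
The plan is to deduce Proposition \ref{bdryest2} from Lemma \ref{bdryest} via a Meyer-type coupling of $X$ with its truncated counterpart $\widehat X$. Write $D = D_Q(\delta_0, r_0)$ and $D^* = D_Q(2\delta_0, r_0)$, and let $\zeta_1$ denote the first time $X$ performs a jump of size $\geq 1$. Under Assumption (J), the big-jump rate $\Lambda := \sup_z \int_{|y-z|\geq 1} J(z,y)\,dy$ is finite, since $J(z,y) \leq c^{-1}|y-z|^{-d-\alpha}$ and $\alpha > 0$. Meyer's decomposition realises $X$ and $\widehat X$ on a common probability space so that $X_t = \widehat X_t$ for all $t < \zeta_1$; in particular, on $\{\widehat\tau_D < \zeta_1\}$ one has $\tau_D = \widehat\tau_D$ and $X_{\tau_D} = \widehat X_{\widehat\tau_D}$, while conditionally on the path of $\widehat X$, $\mathbb{P}(\zeta_1 > \widehat\tau_D \mid \widehat X) \geq e^{-\Lambda \widehat\tau_D}$.

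For the upper bound \eqref{propu}, I split
\[
\mathbb{P}^x(X_{\tau_D} \in B) \leq \mathbb{P}^x\bigl(\widehat X_{\widehat\tau_D} \in B,\ \widehat\tau_D < \zeta_1\bigr) + \mathbb{P}^x(\zeta_1 \leq \widehat\tau_D).
\]
The first term is bounded by $\mathbb{P}^x(\widehat X_{\widehat\tau_D} \in B) \leq c_2 \delta_B(x)$ by \eqref{ubd}. For the second, conditioning on $\widehat X$ and using $1 - e^{-x} \leq x$ gives $\mathbb{P}^x(\zeta_1 \leq \widehat\tau_D) \leq \Lambda\, \mathbb{E}^x \widehat\tau_D$, which by \eqref{L15} and the comparability $\rho(x) \asymp \delta_B(x)$ (from Notation \ref{not1}) is at most $C\delta_B(x)$. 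Adding yields \eqref{propu}.

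For the lower bound \eqref{propl}, let $A := \{\widehat X_{\widehat\tau_D} \in D^*\}$. By Lemma \ref{bdryest}, $\mathbb{P}^x(A) \geq c_1\delta_B(x)$, and on $A \cap \{\widehat\tau_D < \zeta_1\}$ we have $X_{\tau_D} \in D^*$. The naive estimate $\mathbb{P}^x(A) - \mathbb{P}^x(\zeta_1 \leq \widehat\tau_D) \geq (c_1 - C)\delta_B(x)$ is not necessarily positive, so I first localise $\widehat\tau_D$ with Chebyshev: $\mathbb{P}^x(\widehat\tau_D > T_0) \leq \mathbb{E}^x\widehat\tau_D/T_0 \leq C\delta_B(x)/T_0$, so choosing $T_0 := 2C/c_1$ (a constant independent of $x$) gives $\mathbb{P}^x(A,\ \widehat\tau_D \leq T_0) \geq c_1\delta_B(x)/2$. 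Conditioning on $\widehat X$,
\[
\mathbb{P}^x(X_{\tau_D} \in D^*) \geq \mathbb{E}^x\bigl[\mathbf{1}_{A,\ \widehat\tau_D \leq T_0}\, e^{-\Lambda \widehat\tau_D}\bigr] \geq \tfrac{c_1}{2}\,e^{-\Lambda T_0}\,\delta_B(x),
\]
proving \eqref{propl}.

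The main subtlety is the lower bound: the direct subtraction of the big-jump probability from the $\widehat X$ estimate leaves a potentially non-positive constant, since $c_1$ (from Lemma \ref{bdryest}) and $C$ (from the big-jump rate times $\mathbb{E}^x\widehat\tau_D$) are unrelated. The Chebyshev-plus-exponential device truncates $\widehat\tau_D$ at a deterministic level $T_0$ so that the exponential suppression $e^{-\Lambda T_0}$ is a definite constant, producing an unconditional lower bound of the desired form. Everything else reduces to Lemma \ref{bdryest}, the uniform bound $\Lambda < \infty$ from Assumption (J), and the exit-time estimate \eqref{L15}.
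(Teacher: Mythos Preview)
Your proof is correct and takes a genuinely different route from the paper. The paper handles \eqref{propu} by a chain of L\'evy-system comparisons: it bounds $\mathbb{P}^x(X_{\tau_D}\in B\setminus D(2\delta_0,2r_0))$ by $c\,\mathbb{E}^x\tau_D$ times a kernel integral, converts this back into a $\widehat X$ exit probability, and then combines with the $D(2\delta_0,2r_0)$ piece. For \eqref{propl} the paper simply asserts the equality
\[
\mathbb{P}^x\bigl(X_{\tau_{D}}\in D(2\delta_0,r_0)\bigr)=\mathbb{P}^x\bigl(\widehat X_{\widehat\tau_{D}}\in D(2\delta_0,r_0)\bigr)
\]
and quotes \eqref{lbd}. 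Your argument replaces all of this by the Meyer coupling and the single exit-time estimate \eqref{L15}.

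For the upper bound both approaches work; yours is shorter and avoids the back-and-forth between the $X$ and $\widehat X$ L\'evy systems. For the lower bound your approach is actually more careful: the equality the paper asserts is not true in general, since in the coupling a big jump of $X$ at time $\zeta_1<\widehat\tau_D$ forces $X$ to exit $D$ at distance $\ge 1$ from $D$, hence outside $D(2\delta_0,r_0)$, while $\widehat X$ may still later exit into $D(2\delta_0,r_0)$. Only the inequality $\mathbb{P}^x(X_{\tau_D}\in D^*)\le \mathbb{P}^x(\widehat X_{\widehat\tau_D}\in D^*)$ follows, which is the wrong direction for \eqref{propl}. Your Chebyshev localisation of $\widehat\tau_D$ followed by the exponential lower bound $\mathbb{P}(\zeta_1>\widehat\tau_D\mid\widehat X)\ge e^{-\Lambda\widehat\tau_D}$ is precisely what is needed to recover a positive multiple of $\delta_B(x)$, and your observation that the naive subtraction $c_1-C$ need not be positive is the key point. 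So your argument both streamlines \eqref{propu} and fills the gap in the paper's treatment of \eqref{propl}.
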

\begin{proof}
The estimate \eqref{propl} follows from Lemma \ref{bdryest} and the fact that 
\begin{equation}
\mathbb{P}^x\left(X_{\tau_{D(\delta_0, r_0)}}\in D(2\delta_0,
r_0)\right)= \mathbb{P}^x\left(\widehat X_{\widehat\tau_{D(\delta_0,
    r_0)}}\in D(2\delta_0, r_0)\right)\geq c_{11}\delta_B(x).
\end{equation}
To get \eqref{propu} we will once again use Lemma \ref{bdryest}. From that lemma, we know that $$\mathbb{P}^x\left(\widehat X_{\widehat\tau_{D(\delta_0, r_0)}}\in B \right)\leq c\delta_B(x).$$
We would like to obtain a similar bound for the process $X.$ Using the Levy system formula, one has
\begin{eqnarray}\label{propu1}
\mathbb{P}^x\left(X_{\tau_{D(\delta_0, r_0)}}\in B\setminus D(2\delta_0, 2r_0) \right)& = & 
\mathbb{E}^x\left[\int_{0}^{\tau_{D(\delta_0, r_0)}}\int_{B\setminus D(2\delta_0, 2r_0)}J(X_s, y) dy ds\right]\nonumber\\
&\le & c_1 \mathbb{E}^x\left[\int_{0}^{\tau_{D(\delta_0, r_0)}}\int_{B\setminus D(2\delta_0, 2r_0)}\frac{1}{|X_s - y|^{d + \alpha}}dy ds\right] \nonumber\\
& \le & c_2\left(\int_{B\setminus D(2\delta_0, 2r_0)}|y|^{-d -\alpha}dy\right)\mathbb{E}^x(\tau_{D(\delta_0, r_0)})\nonumber\\
&\le& c_3\left(\int_{D(2\delta_0, r_0)\setminus D(3\delta_0/2, r_0)}|y|^{-d -\alpha}dy\right)\mathbb{E}^x(\tau_{D(\delta_0, r_0)})\nonumber\\
&\le& c_4\mathbb{E}^x\left[\int_{0}^{\tau_{D(\delta_0, r_0)}}\int_{D(2\delta_0, r_0)\setminus D(3\delta_0/2, r_0) }\frac{1}{|X_s - y|^{d + \alpha}}dy ds\right] \nonumber\\
&\le& c_5\mathbb{E}^x\left[\int_{0}^{\tau_{D(\delta_0, r_0)}}\int_{D(2\delta_0, r_0)\setminus D(3\delta_0/2, r_0) }J(X_s, y) dy ds\right]\nonumber\\
&=& c_5\mathbb{P}^x\left(\widehat X_{\widehat\tau_{D(\delta_0, r_0)}}\in D(2\delta_0, r_0)\setminus D(3\delta_0/2, r_0)\right)
\end{eqnarray}
Therefore we have that 
\begin{eqnarray}
\mathbb{P}^x\left(X_{\tau_{D(\delta_0, r_0)}}\in B\right)&=& \mathbb{P}^x\left(X_{\tau_{D(\delta_0, r_0)}}\in B\setminus D(2\delta_0, 2r_0)\right) + \mathbb{P}^x\left(\widehat X_{\widehat\tau_{D(\delta_0, r_0)}}\in D(2\delta_0, 2r_0)\right) \nonumber\\
&\le& c_5\mathbb{P}^x\left(X_{\tau_{D(\delta_0, r_0)}}\in D(2\delta_0, r_0)\setminus D(3\delta_0/2, r_0)\right) + \mathbb{P}^x\left(\widehat X_{\widehat\tau_{D(\delta_0, r_0)}}\in B\right) \nonumber\\
&\le& c_5\mathbb{P}^x\left(\widehat X_{\widehat\tau_{D(\delta_0, r_0)}}\in D(2\delta_0, r_0)\setminus D(3\delta_0/2, r_0)\right) + \mathbb{P}^x\left(\widehat X_{\widehat\tau_{D(\delta_0, r_0)}}\in B\right) \nonumber\\
&\le& (c_5 +1)\mathbb{P}^x\left(\widehat X_{\widehat\tau_{D(\delta_0, r_0)}}\in B\right) \nonumber\\
&\le& c_6\delta_B(x)
\end{eqnarray}
In the second line above, we have used \eqref{propu1} and  in the last but one line we have used \eqref{ubd}. This finishes the proof of the proposition. 
\end{proof}
We now have all the ingredients to state and prove the following Uniform Boundary Harnack Principle (BHP).
\begin{theorem}\label{BdryHarnack}
Let $B$ be a fixed ball with characteristics $R_1,$ and $M_1$ as above. There exists a positive constant $c_1\equiv c_1(\alpha, d, R_1, M_1)$ such that for $Q\in\partial B,$  $r \in (0, R_1),$ and any non-negative function $u$ on $\mathbb{R}^d,$ that is $\mathcal L$-harmonic in $B\cap B(Q, r),$  and vanishing continuously on $B^c\cap B(Q, r),$ we have
$$\dfrac{u(x)}{\delta_B(x)}\leq c_1\dfrac{u(y)}{\delta_B(y)}, \quad\mbox{for every}\hspace{0.05in} x, y\in B\cap B(Q, r/2).$$
\end{theorem}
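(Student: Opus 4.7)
The plan is to establish a two-sided estimate of the form $u(x) \asymp \delta_B(x)\, M$ for all $x \in B \cap B(Q, r/2)$, where $M$ is a single quantity depending on $u$, on a fixed reference point $x_R \in B \cap B(Q, r/2)$ with $\delta_B(x_R) \asymp r$, and on $r$, with constants depending only on $\alpha, d, R_1, M_1$. Dividing the estimate at $x$ by that at $y$ cancels $M$ and yields the BHP. Interior points (those with $\delta_B(x) \geq \eta r$ for a fixed small $\eta$) are handled at once by the interior Harnack inequality (Proposition~\ref{hq0}) applied along a chain of balls joining $x$ to $x_R$: this gives $u(x) \asymp u(x_R)$, and since $\delta_B(x) \asymp r$ on this region, the two-sided comparison is immediate. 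It therefore suffices to handle boundary points $x$ with $\delta_B(x) < \eta r$; for each such $x$, let $Q_x \in \partial B$ be its orthogonal projection and set $D := D_{Q_x}(\delta, \rho)$, with $\delta, \rho$ chosen at the appropriate scale (proportional to $r$ through a rescaled version of Proposition~\ref{bdryest2}) so that $D \subset B \cap B(Q, r)$ and $x \in D_{Q_x}(2\delta, \rho)$.

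For the lower bound at such an $x$, drop all but the favourable event $\{X_{\tau_D} \in D_{Q_x}(2\delta, \rho)\}$: combining the Harnack-chain estimate $\inf_{D_{Q_x}(2\delta, \rho)} u \geq c\, M$ with the lower bound~\eqref{propl} on the exit probability gives $u(x) \geq c\,\delta_B(x)\,M$. For the upper bound, write $u(x) = \mathbb{E}^x[u(X_{\tau_D})]$ and decompose by exit location into three regions. Exits into $B^c \cap B(Q,r)$ contribute zero by hypothesis. Lateral exits into $B$ are bounded by $\sup_{\bar D \cap B} u \cdot \mathbb{P}^x[X_{\tau_D} \in B]$, which via the Carleson estimate (Theorem~\ref{Carleson}) and~\eqref{propu} is $\leq c\, u(x_R)\, \delta_B(x)$. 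The jump-out exits (landing in $B^c \setminus B(Q,r)$) are rewritten using the Levy system~\eqref{levy} and Assumption (J) as $c\, \mathbb{E}^x[\tau_D]\int_{B(Q,r)^c} u(y)\,|x-y|^{-d-\alpha}\,dy$, where we use that $|X_s - y| \asymp |x - y|$ for $X_s \in D$ and $y \in B(Q,r)^c$. Using the comparison $|x - y| \asymp |x_R - y|$ for such $y$, and a converse Levy-system bound coming from the harmonicity of $u$ at the interior reference point,
\begin{equation*}
u(x_R) \;\geq\; c\,\mathbb{E}^{x_R}\bigl[\tau_{B(x_R,\, \delta_B(x_R)/2)}\bigr]\int_{B(Q,r)^c} \frac{u(y)}{|x_R - y|^{d+\alpha}}\, dy,
\end{equation*}
the jump-out piece is controlled in terms of $\delta_B(x)$, $u(x_R)$ and $r$.

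The main obstacle is the jump-out term: since $u$ is only known to be non-negative (and not bounded in terms of $u(x_R)$) on $B(Q, r)^c$, the integral $\int_{B(Q,r)^c} u(y)\,|x-y|^{-d-\alpha}\,dy$ cannot be controlled directly and must instead be tamed through the harmonicity of $u$ at the interior reference point $x_R$. The delicate bookkeeping is then to track the scalings of $D$, of $\mathbb{E}^x[\tau_D]$ (an analogue of~\eqref{L15} for the full process), and of $\mathbb{E}^{x_R}[\tau_{B(x_R, \delta_B(x_R)/2)}]$, so that the powers of $r$ entering the various pieces of the upper and lower bounds combine into a single scale-invariant quantity $M$, and so that all resulting constants depend only on $\alpha, d, R_1, M_1$ uniformly in $r \in (0, R_1)$.
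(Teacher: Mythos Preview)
Your approach matches the paper's: lower bound via the favourable exit event and \eqref{propl}; upper bound by decomposing the exit location, using the Carleson estimate together with \eqref{propu} for nearby exits, and a L\'evy-system comparison of the far-jump integral to $u$ at an interior reference point (this is precisely the paper's \eqref{bhp2}--\eqref{bhp3}).

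One correction is needed in your upper-bound decomposition. Your ``lateral exits into $B$'' piece cannot be bounded by $\sup_{\bar D\cap B}u\cdot\mathbb P^x[X_{\tau_D}\in B]$: the process can jump from $D$ to a point of $B$ far from $Q$, outside $B(Q,r/2)$, where the Carleson estimate gives no control on $u$. The paper's fix is to split instead according to whether $X_{\tau_D}$ lands in a slightly larger local box $D'\subset B\cap B(Q,r/2)$ (their $D(2,2)$) or outside it. On $\{X_{\tau_D}\in D'\}$ the Carleson bound and \eqref{propu} yield \eqref{bhp4}. On $\{X_{\tau_D}\notin D'\}$ the only nonzero contributions come from jumps (continuous exits land on $\partial B$, where $u=0$), and the L\'evy-system argument handles all of $\mathbb R^d\setminus D'$ at once---including far jumps into $B$---since $|z-y|\asymp|w-y|$ holds uniformly for $z\in D$ and $y\notin D'$, giving \eqref{bhp3}. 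With this adjusted decomposition your argument goes through and coincides with the paper's.
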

\begin{proof}
By the Harnack principle and a chain argument it is sufficient to prove the inequality 
for $x, y \in B\cap B(Q, rr_0/8).$ We recall that $r_0 = \frac{R_1}{4(1 + M_1^2)}.$ For any $r\in (0,R_1]$ and $y\in B\cap B(Q, rr_0/8),$ let $Q_y$ be
  the point so that $\mid y - Q_y\mid  = \delta_B(y)$ and let $y_0 = Q_y +
  \frac{r}{8}\frac{(y- Q_y)}{\mid y- Q_y\mid }.$ Choose a smooth function
  $\phi:\mathbb{R}^{d-1}\rightarrow\mathbb{R}$ satisfying $\phi(0) =
  \nabla\phi(0) = 0,$ and $\mid  \nabla\phi(x) - \nabla\phi(y)\leq
  M_1\mid x-y\mid $ and an orthonormal coordinate system $\mbox{CS} \equiv CS_Q$ with its
  origins at $Q_y$ so that $B(Q_y, R_1)\cap B = \{y = (y_d, \tilde
  y)\in B(0, R_1)\hspace{0.05in}\mbox{in CS}\hspace{0.05in}: y_d >
  \phi(\tilde y)\}.$
In the above coordinate system $\tilde y = 0,$ and $y_0 = (0, r/8).$
For $a_1, a_2 >0$ define
$$D(a_1, a_2) = \{y\hspace{0.05in}\mbox{in CS}\hspace{0.05in}: 0 < y_d
- \phi(\tilde y) < a_1\frac{r\delta_0}{8}, \mid \tilde y\mid <
a_2\frac{rr_0}{8}\}.$$ Then it is easy to see that $D(2, 2)\subset B\cap
B(Q, r/2).$ Since $u$ is a harmonic function in $B\cap B(Q,
r)$ and vanishes continuously in $B^c\cap B(Q, r),$ it is regular
harmonic in $B\cap B(Q, r/2)$ and hence also in $D(2, 2)$ 
(c.f. Lemma $4.2$ \cite{cskv}). Thus by the Harnack inequality we have
\begin{eqnarray}\label{bhp1}
u(x)&=&\mathbb{E}^x\left(u(X_{\tau_{D(1, 1)}})\right) \geq\mathbb{E}^x\left(u(X_{\tau_{D(1, 1)}}); X_{\tau_{D(1, 1)}}\in D(2, 1)\right) \nonumber\\
&\ge& c_1u(x_0)\mathbb{P}^x\left(X_{\tau_{D(1, 1)}}\in D(2, 1)\right)\geq c_2u(x_0)\delta_B(x)/r,
\end{eqnarray}
where in the last line, we used \eqref{propl}. Let $w$ be the point with coordinates $(\tilde 0, rr_0/16).$ Then observe that there is a positive number $\eta\equiv\eta(M_1, \delta_0, r_0)\in (0, 1)$ such that $B(w, \eta rr_0/16)\in D(1,1).$ By the Levy system formula,
\begin{eqnarray}\label{bhp2}
u(w)&\ge& \mathbb{E}^w\left(u(X_{\tau_{D(1, 1)}}); X_{\tau_{D(1, 1)}}\notin D(2, 2)\right)\nonumber\\
&=&\int_{D(1, 1)} G_{D(1, 1)}(w, z)\int_{\mathbb{R}^d\setminus D(2, 2)}u(y)J(y, z)dy dz \nonumber\\
&\ge& c_3\mathbb{E}^w\left(\tau_{B(w, \eta rr_0/16)}\right)\int_{\mathbb{R}^d\setminus D(2, 2)}\frac{u(y)}{|w-y|^{d + \alpha}}dy \nonumber\\
&\ge& c_4 r^2 \int_{\mathbb{R}^d\setminus D(2, 2)}\frac{u(y)}{|w-y|^{d + \alpha}}dy.
\end{eqnarray}
Hence 
\begin{eqnarray}\label{bhp3}
\mathbb{E}^x\left(u(X_{\tau_{D(1, 1)}}); X_{\tau_{D(1, 1)}}\notin D(2, 2)\right)\nonumber\\
= \int_{D(1, 1)} G_{D(1, 1)}(x, z)\int_{\mathbb{R}^d\setminus D(2, 2)}u(y)J(y, z)dy dz \nonumber\\
\leq c_5\mathbb{E}^x\left(\tau_{D(1, 1)}\right)\int_{\mathbb{R}^d\setminus D(2, 2)}\frac{u(y)}{|w-y|^{d + \alpha}}dy \nonumber\\
\leq c_6\delta_B(x)r\int_{\mathbb{R}^d\setminus D(2, 2)}\frac{u(y)}{|w-y|^{d + \alpha}}dy\leq\dfrac{c_6\delta_B(x)}{c_4r}u(w).
\end{eqnarray}
On the other hand by the Harnack inequality and the Carleson estimate, we have
\begin{eqnarray}\label{bhp4}
\mathbb{E}^x\left(u(X_{\tau_{D(1, 1)}}); X_{\tau_{D(1, 1)}}\in D(2, 2)\right)&\le& c_7u(x_0)\mathbb{P}^x\left(X_{\tau_{D(1, 1)}}\in D(2, 2)\right)\nonumber\\
&\le& c_8 u(x_0)\delta_B(x)/r
\end{eqnarray}
where we used \eqref{propu}. Combining the two inequalities, we obtain
\begin{eqnarray}\label{bhp5}
u(x)&=& \mathbb{E}^x\left(u(X_{\tau_{D(1, 1)}}); X_{\tau_{D(1, 1)}}\notin D(2, 2)\right) + \mathbb{E}^x\left(u(X_{\tau_{D(1, 1)}}); X_{\tau_{D(1, 1)}}\in D(2, 2)\right) \nonumber\\
&\le& \dfrac{c_6\delta_B(x)}{c_4r}u(w) + \frac{c_8}{r} u(x_0)\delta_B(x) \nonumber\\
&\le& \frac{c_9}{r}\delta_B(x)\left(u(x_0) + u(w)\right) \nonumber\\
&\le& \frac{c_{10}}{r}\delta_B(x)u(x_0)
\end{eqnarray}
Thus by \eqref{bhp1} and \eqref{bhp5}, we observe that for every $x, y\in B\cap B(Q, rr_0/8),$ we have
$$ \dfrac{u(x)}{u(y)}\leq\dfrac{c_{10}}{c_2}\dfrac{\delta_B(x)}{\delta_B(y)},$$
which finishes the proof of the theorem.
\end{proof}
\section{Green Function Estimates} \label{GFE}
Our aim in this section is to prove a 3G-inequality (Proposition
\ref{3G}), using which we shall prove Proposition \ref{condgauge}.
We closely follow the idea of the proofs from \cite{cskv} and
\cite{cfz}. The basic ingredients in the proof of the the 3G estimate
are the Carleson estimate (see Theorem \ref{Carleson}) and the
Boundary Harnack Principle (Theorem \ref{BdryHarnack}).  

Recall that $J$ satisfies (\ref{dl2}). We would like to get upper
and lower bound estimates for the Green function $G_B(x,y)$ for the
ball $B.$ We use estimates on the transition density to get these bounds. Let $p(t,x,y)$ be the transition density for our process $X_t$. For $r >0$ let
$$p_c(t,r) = \dfrac{e^{\frac{-r^2}{t}}}{t^{d/2}} \mbox{ and } p_j(t, r) =
t^{\frac{-d}{\alpha}}\wedge\frac{t}{r^{d+\alpha}}.$$ 

\begin{lemma}\label{ck}
The transition density $p(t,x,y)$ satisfies 
$$p(t,x,y) \leq c_1(t^{\frac{-d}{\alpha}}\wedge t^{\frac{-d}{2}})\wedge (p_c(t, c_2\mid x-y\mid ) + p_j(t, \mid x-y\mid )),$$
and 
$$p(t,x,y) \geq c_3 t^{\frac{-d}{2}}, \mbox{ for } \mid x-y \mid^2 \leq t < 1$$
\end{lemma}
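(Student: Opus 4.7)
The plan is to derive these transition density bounds directly from the heat kernel estimates established by Chen and Kumagai in \cite{CK2}, since the operator $\mathcal L$ under assumptions (A) and (J) falls precisely within their framework. Under (A), the diffusion part of $\mathcal L$ is a uniformly elliptic, divergence-form second-order operator with bounded measurable coefficients, and under (J) the jump kernel is symmetric and pointwise comparable to $|x-y|^{-d-\alpha}$. The symmetric Dirichlet form associated to $\mathcal L$ therefore decomposes into a strongly local uniformly elliptic piece plus a stable-like nonlocal piece, which is exactly the mixed-type setting analyzed in \cite{CK2}.

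For the upper bound, I would invoke the off-diagonal upper heat kernel estimate from \cite{CK2}, which yields both an on-diagonal bound of the form $c\,(t^{-d/\alpha}\wedge t^{-d/2})$ arising from the competing scalings of the diffusion and jump components, together with an off-diagonal bound of Gaussian-plus-stable type, namely $p_c(t, c_2|x-y|) + p_j(t, |x-y|)$. Taking the smaller of the two gives the inequality as stated. Heuristically, the two off-diagonal pieces correspond to the two mechanisms by which $X$ can transit from $x$ to $y$: via the continuous diffusive part (producing Gaussian decay in $|x-y|^2/t$) or via a single large jump (producing polynomial decay in $|x-y|^{d+\alpha}$), and the law of the process is comparable to the sum of these contributions.

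For the lower bound, one uses the fact that, for $t<1$ and $|x-y|^2\leq t$, the diffusive part of $\mathcal L$ alone already produces the standard Aronson-type Gaussian lower bound $c_3 t^{-d/2}$ on the near-diagonal region. This near-diagonal estimate appears directly in \cite{CK2}; the additional jump component only contributes positive mass through the Levy system \eqref{levy} and so cannot destroy the diffusive lower bound. Because the restriction $|x-y|^2 \leq t$ keeps us in the diagonal regime where the Gaussian density is of order $t^{-d/2}$, no balancing against the jump-type expression $p_j(t,|x-y|)$ is required.

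The main obstacle here is essentially bookkeeping rather than substantive: one must identify the precise statement in \cite{CK2} (phrased for a rather general mixed-type symmetric Markov process on a metric measure space) and check that assumptions (A) and (J) satisfy the volume-doubling and jump-kernel hypotheses there. Once this identification is made, both inequalities of the lemma follow immediately as special cases.
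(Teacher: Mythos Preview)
Your proposal is correct and matches the paper's own proof, which simply refers the reader to \cite{CK2} (Theorem 1.4, with the near-diagonal lower bound also appearing as Theorem 3.1 there). Your additional commentary on why assumptions (A) and (J) place $\mathcal L$ within the Chen--Kumagai framework is accurate and more detailed than what the paper provides, but the approach is the same.
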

\begin{proof} See Theorem 1.4 in \cite{CK2}.
\end{proof}

\begin{lemma}\label{gbounds}
Let $G$ be the Green function for the process $X_t.$ Then,
\begin{enumerate}
\item[(a)]  for all
$x,y\in\mathbb{R}^d,$ there exist $c_1 >0$ such that 
\begin{equation} \label{upper} G(x,y)\leq \frac{c_1}{ \mid x-y\mid^{d-2}}.\end{equation}
\item[(b)] for all $x, y \in\mathbb{R}^d,$ with $\mid x-y\mid \leq 7R/8,$ there exist $c_2 >0$ such that
\begin{equation} \label{lower} G(x, y) \geq  \frac{c_2}{ \mid x-y\mid^{d-2}}.\end{equation}
\item[(c)] for all $x, y$  such that $\mid x-y\mid \leq 7R/8,$ there exists $L\geq 2$ and $c>0$ such that
\begin{equation} \label{lowerL}G(x, y) - G(Lx, Ly)\geq  \frac{c_3}{ \mid x-y\mid^{d-2}}.\end{equation}
\end{enumerate}
\end{lemma}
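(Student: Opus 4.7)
The plan is to derive all three bounds directly from the transition density estimates in Lemma~\ref{ck} via the identity $G(x,y) = \int_0^\infty p(t,x,y)\,dt$. Writing $r = |x-y|$, the overall strategy is to split the time integral at the natural scale $t = r^2$, where the diffusion contribution balances the jump contribution.

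For part (a), on the small-time range $t \leq r^2$ I would use the second bound $p(t,x,y) \leq c_1(p_c(t,c_2 r) + p_j(t,r))$. The Gaussian contribution gives $\int_0^\infty t^{-d/2} e^{-c_2^2 r^2/t}\,dt = c\, r^{2-d}$ via the substitution $u = r^2/t$. For the jump contribution, a case analysis of $p_j$ (linear in $t$ for $t \leq r^\alpha$, polynomial decay thereafter) shows its integral is bounded by $c(r^{4-d-\alpha} + r^{\alpha-d})$, both dominated by $r^{2-d}$ for $\alpha \in (0,2)$ after separately considering $r \leq 1$ and $r > 1$. On $t \geq r^2$ I would use $p(t,x,y) \leq c_1(t^{-d/\alpha} \wedge t^{-d/2})$; since $d \geq 3$ and $d > \alpha$, direct integration of whichever factor is smaller in each subrange yields at most $O(r^{2-d})$.

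For part (b), the pointwise lower bound $p(t,x,y) \geq c_3\, t^{-d/2}$ on $r^2 \leq t < 1$ applies since $r \leq 7R/8 \leq 7/16 < 1$, and gives
\[
G(x,y) \;\geq\; \int_{r^2}^{1} c_3\, t^{-d/2}\,dt \;=\; \frac{2c_3}{d-2}\,\bigl(r^{-(d-2)} - 1\bigr).
\]
For $d \geq 3$ and $r \leq 7/16$ we have $r^{-(d-2)} \geq (16/7)^{d-2} > 2$, so the $-1$ is absorbed into half of $r^{-(d-2)}$, yielding $G(x,y) \geq c_2/r^{d-2}$. Part (c) is then immediate: by (a), $G(Lx,Ly) \leq c_1/(Lr)^{d-2} = c_1 L^{-(d-2)}/r^{d-2}$; choosing $L = \max\{2,\,(2c_1/c_2)^{1/(d-2)}\}$ makes $c_2 - c_1 L^{-(d-2)} \geq c_2/2$, which gives $G(x,y) - G(Lx,Ly) \geq (c_2/2)/r^{d-2}$.

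The main obstacle is in part (a): a naive estimate of $\int_0^\infty p_j(t,r)\,dt$ in isolation yields $O(r^{\alpha-d})$, which is \emph{larger} than the target $r^{2-d}$ when $r \leq 1$ since $\alpha < 2$. Resolving this requires genuine use of the minimum structure in Lemma~\ref{ck}: for $t \leq r^2$ the jump density only appears in its linear-in-$t$ regime (giving the dominated $r^{4-d-\alpha}$), while for $t > r^2$ the polynomial bound $t^{-d/\alpha} \wedge t^{-d/2}$ sharpens $p_j$. Once the time splitting is set up correctly and the case $r > 1$ is handled separately, the remainder is routine bookkeeping.
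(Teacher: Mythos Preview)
Your proposal is correct and follows essentially the same approach as the paper: split $G(x,y)=\int_0^\infty p(t,x,y)\,dt$ at $t=|x-y|^2$, apply the transition density bounds of Lemma~\ref{ck} to each piece (Gaussian plus jump kernel below, the diagonal bound above), then obtain (b) from the pointwise lower bound and deduce (c) by subtracting the upper bound at scale $L$ from the lower bound. The only cosmetic difference is that on $t\ge r^2$ the paper uses the simpler consequence $p(t,x,y)\le c\,t^{-d/2}$ directly, so your case analysis with $t^{-d/\alpha}\wedge t^{-d/2}$ is more work than needed there.
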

\begin{proof}
(a) By definition, \begin{eqnarray}
G(x,y)& = & \int_{0}^{\infty}p(t,x,y)dt \nonumber\\ 
      & = & \int_{0}^{\mid x-y\mid^2}p(t,x,y)dt + \int_{\mid x-y\mid^2}^{\infty}p(t,x,y)dt \nonumber\\
      & = & I_1 + I_2
\end{eqnarray}
Now, $$I_2 =
\int_{\mid x-y\mid^2}^{\infty}p(t,x,y)dt\leq c\int_{\mid x-y\mid^2}^{\infty}t^{\frac{-d}{2}}\leq\frac{c_1}{d-2}\mid x-y\mid^{2-d},$$
where we used the bound $p(t,x,y)\leq ct^{\frac{-d}{2}}$ from
Lemma \ref{ck}. To estimate $I_1$ we proceed along the following lines. Suppose first $\mid x-y\mid <1.$ Then using Lemma \ref{ck} again, 
\begin{eqnarray}\label{1}
I_1 & = & \int_{0}^{\mid x-y\mid^2}p(t,x,y)dt \nonumber\\
    &\le & \int_{0}^{\mid x-y\mid^2}p_c(t,x,y)dt + \int_{0}^{\mid x-y\mid^2}p_j(t,x,y)dt \nonumber\\
    &\le & c_2\mid x-y\mid^{2-d} + c_3\int_{0}^{\mid x-y\mid^2}\frac{t}{\mid x-y\mid^{d+\alpha}} \nonumber\\
    & = & c_2\mid x-y\mid^{2-d} + \frac{c_3}{2} \mid x-y\mid^{-d+4-\alpha}
\end{eqnarray}
In the third line above we have used the fact that $p_c$ is a Gaussian
density, the fact that $p_j(t, x, y) = \frac{t}{\mid x-y\mid^{d+\alpha}}$ for
$t\leq \mid x-y\mid^{\alpha}$ and that $\mid x-y\mid^2 < \mid x-y\mid^{\alpha}$ for $\mid x-y\mid 
< 1.$ Since $\alpha < 2$ and $\mid x-y\mid  < 1,$ the last line in the above
chain is bounded above by $c_4\mid x-y\mid^{2-d}.$ A similar estimate can be
proved for $I_1$ when $\mid x-y\mid  > 1.$ Combining the two estimates for
$I_1$ and $I_2$ proves (\ref{upper}).

(b) By Theorem $3.1$ in \cite{CK2}, one has $p(t, x, y)\geq
c_1t^{\frac{-d}{2}}, \mbox{ for } \mid x-y\mid^2\leq t\leq 1.$ Therefore
\begin{eqnarray*}
G(x,y)& = & \int_{0}^{\infty}p(t,x,y)dt \nonumber\\ &\ge &
\int_{\mid x-y\mid^2}^{1}c_1t^{\frac{-d}{2}}dt \nonumber\\ &\ge &
c_2\mid x-y\mid^{2-d}.
\end{eqnarray*}
(c) Using the bounds in (\ref{upper}) and (\ref{lower}), we can write 
$$G(x, y) - G(Lx, Ly)\geq (c_2 - \frac{c_1}{L^{d-2}})\mid x-y\mid^{2-d},$$
for $\mid x-y\mid \leq 7R/8.$ Now choose $L\geq 2$ such that $c_2 - \frac{c_1}{L^{d-2}}= c_3 >0.$ This finishes the proof.
\end{proof}
We are now ready to prove the required estimates for $G_B$. For any $x \in B,$ let $\delta_B(x) = \mbox{dist}(x, \partial B).$
\begin{lemma}\label{G_Bbd}
 Let $G_B$ denote the Green function for the killed process $X_t^B.$ Then,
\begin{equation} \label{ub1}G_B(x, y)\leq \frac{c_1}{\mid x -y \mid^{d-2}}\mbox{ for all } x, y\in B\end{equation}
and
\begin{equation} \label{lb1} G_B(x, y)\geq \frac{c_2}{\mid x -y \mid^{d-2}} \mbox{ when } 2\mid x-y\mid \leq\delta_B(x)\wedge\delta_B(y).\end{equation}
\end{lemma}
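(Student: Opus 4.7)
The plan is to reduce both estimates to the Green function bounds for the unkilled process established in Lemma \ref{gbounds}, using the strong Markov property at the exit time $\tau_B$.

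For the upper bound (\ref{ub1}) I would simply observe that the killed process accumulates occupation only up to $\tau_B \le \infty$, so
$$G_B(x,y)=\mathbb{E}^x\!\left[\int_0^{\tau_B}\!\cdot\, ds\right]\le \mathbb{E}^x\!\left[\int_0^{\infty}\!\cdot\, ds\right]=G(x,y),$$
and (\ref{ub1}) is immediate from Lemma \ref{gbounds}(a).

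For the lower bound (\ref{lb1}) I would start from the strong Markov identity
$$G(x,y)=G_B(x,y)+\mathbb{E}^x\bigl[G(X_{\tau_B},y)\bigr],\qquad x,y\in B,$$
and bound the boundary term from above. Since $X_{\tau_B}\in B^c$, one has $|X_{\tau_B}-y|\ge \delta_B(y)\ge 2|x-y|$ under the hypothesis, so Lemma \ref{gbounds}(a) gives
$$\mathbb{E}^x\bigl[G(X_{\tau_B},y)\bigr]\le \frac{c_1}{(2|x-y|)^{d-2}}.$$
Combined with the lower bound from Lemma \ref{gbounds}(b),
$$G_B(x,y)\ge \left(c_2-\frac{c_1}{2^{d-2}}\right)\frac{1}{|x-y|^{d-2}}.$$

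The main obstacle is ensuring the constant $c_2-c_1/2^{d-2}$ is positive: the constants $c_1$ (from the upper estimate in Lemma \ref{gbounds}(a)) and $c_2$ (from the lower estimate in Lemma \ref{gbounds}(b)) are \emph{a priori} independent, and for small $d$ a direct cancellation may fail. This is exactly the role of Lemma \ref{gbounds}(c): choose $L\ge 2$ so that $c_2-c_1/L^{d-2}>0$, and apply the strong Markov argument to the smaller ball $B_0=B(x,L|x-y|)\subset B$ (permitted by the hypothesis on $\delta_B$, with the factor $2$ absorbed into the constant $L$). By monotonicity $G_B\ge G_{B_0}$, and since $|X_{\tau_{B_0}}-y|\ge (L-1)|x-y|$, the same computation yields $G_{B_0}(x,y)\ge c\,|x-y|^{2-d}$, finishing the proof.
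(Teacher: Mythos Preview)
Your upper bound and the overall strategy for the lower bound (compare with a smaller ball via the strong Markov property, and use Lemma \ref{gbounds}(c) to make the constants cooperate) are exactly what the paper does. The gap is in the sentence ``permitted by the hypothesis on $\delta_B$, with the factor $2$ absorbed into the constant $L$.'' The containment $B_0=B(x,L|x-y|)\subset B$ requires $L|x-y|\le \delta_B(x)$, whereas the hypothesis of the lemma only gives $2|x-y|\le \delta_B(x)\wedge\delta_B(y)$. Since $L$ is chosen (possibly much) larger than $2$ to force $c_2-c_1/L^{d-2}>0$, you cannot simply ``absorb'' the $2$ into $L$: in the range $2|x-y|\le \delta_B(x)\wedge\delta_B(y)<L|x-y|$ your ball $B_0$ need not sit inside $B$, and the argument as written does not apply.

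The paper closes this gap with the Harnack inequality (Proposition \ref{hq0}). Assuming without loss of generality $\delta_B(y)\le\delta_B(x)$, it first runs essentially your argument in the region $L|x-y|\le\delta_B(y)$ (its Case~1), and then treats the remaining strip $2|x-y|\le\delta_B(y)<L|x-y|$ (its Case~2) by picking an auxiliary point $x_0\in\partial B\bigl(y,\tfrac{\delta_B(y)}{L+1}\bigr)$, which does satisfy $L|x_0-y|\le\delta_B(y)$ and hence falls under Case~1, and then using Harnack for $G_B(\cdot,y)$ to pass from $x_0$ to $x$ (the two points are within a bounded multiple of $\delta_B(y)$ of each other and well inside $B$). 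Adding this Harnack step to your argument would make it complete.
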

\begin{proof} Since
 $G_B(x, y)\leq G(x, y)$ the first part follows from (\ref{upper}). Without loss of generality assume that
  $\delta_B(y)\leq\delta_B(x).$ For the second part, we divide the
  proof into two cases. Let $\widetilde r_0 = R/8$ (any positive $\widetilde r_0$ strictly less than $R$ will also work) and let $L$ be such that Lemma \ref{gbounds} holds.

{\bf Case 1:} $L\mid x-y\mid \leq\delta_B(y)$ We consider three subcases.\\
 { (a)} $\delta_B(y)\leq \widetilde r_0.$ First observe that $B(y, \delta_B(y))\subset B.$ Then note that since
$L\mid x-y\mid \leq\delta_B(y),$ we have
$\mid X_{\tau_{B(y, \delta(y)}}- y\mid \geq\delta_B(y)\geq L\mid x-y\mid ,$ and
therefore
\begin{eqnarray}\label{GBlow}
G_B(x, y) & \ge & G_{B(y, \delta_B(y))}(x,y)\nonumber\\
          & = & G(x, y) - \mathbb{E}^x\left[G(X_{\tau_{B(y, \delta(y))}}, y)\right] \nonumber\\
          & \ge & G(x, y) - \frac{c_1}{L^{d-2}}\mid x-y\mid^{2-d} \nonumber\\
          & \ge & (c_2 - \frac{c_1}{L^{d-2}})\mid x-y\mid^{2-d}\nonumber\\
          & = & c_3\mid x-y\mid^{2-d}.
\end{eqnarray}
{ (b)} $\delta_B(y)> \widetilde r_0$ and $L \mid x - y\mid \leq\widetilde r_0.$ In this case, $\mid X_{\tau_{B(y, \widetilde r_0)}} - y\mid \geq\widetilde r_0\geq L \mid x - y\mid .$ Then,
\begin{eqnarray}\label{GBlow2}
G_B(x, y) & \ge & G_{B(y, \widetilde r_0)}(x,y)\nonumber\\
          & = & G(x, y) - \mathbb{E}^x\left[G(X_{\tau_{B(y, \widetilde r_0)}}, y)\right] \nonumber\\
          & \ge & G(x, y) - \frac{c_1}{L^{d-2}}\mid x-y\mid^{2-d} \nonumber\\
          & \ge & (c_2 - \frac{c_1}{L^{d-2}})\mid x-y\mid^{2-d}\nonumber\\
          & = & c_3\mid x-y\mid^{2-d}.
\end{eqnarray}
{ (c)} $\delta_B(y) > \widetilde r_0 $ and $L\mid x-y\mid  > \widetilde r_0.$ In this
case, we have $\delta_B(x)\geq\delta_B(y)\geq L\mid x-y\mid > \widetilde r_0.$ Choose a
point $w\in\partial B(y, \frac{\widetilde r_0}{2L}).$ Then from the argument in
{(b)} we get
$$G_B(w, y)\geq c_4\frac{1}{\left(\frac{\widetilde r_0}{2L} \right)^{d-2}}.$$
Now $B$ is connected, Lipschitz, and $\mid x-w\mid \leq \mid x-y\mid  +
\mid y-w\mid \leq\delta_B(y)/L + \frac{\widetilde r_0}{2L}.$ Therefore by the 
Harnack inequality (Proposition \ref{hq0}) and a chain argument, we have
$$G_B(x,y)\geq c_5G_B(w,y)\geq c_6\frac{1}{\left(\frac{\widetilde r_0}{2L}
  \right)^{d-2}}\geq c_72^{d-2}\frac{1}{\left(\frac{\widetilde r_0}{L}
  \right)^{d-2}}\geq c_8\mid x-y\mid^{2-d},$$
where in the last inequality, we used that $\mid x-y\mid  > \frac{\widetilde r_0}{L}.$

{\bf Case 2:}, $2\mid x-y\mid \leq\delta_B(y)< L\mid x-y\mid $

Take $x_0\in\partial B(y, \frac{\delta_B(y)}{L+1}).$ Then, 
$$\mid x-y\mid \leq\frac{1}{2}\delta_B(y)\leq L\mid x_0 -y\mid
=\frac{L}{L+1}\delta_B(y)\leq\delta_B(x)\wedge\delta_B(y).$$ We also
have $\mid x_0 - x\mid \leq\mid x_0 - y\mid + \mid x - y\mid
\leq\left(\frac{1}{L+1} + \frac{1}{2}\right)\delta_B(y).$ Therefore,
using Harnack inequality (Proposition \ref{hq0}) for the $\mathcal L$
harmonic function $G_B,$ and the argument in the Case $1$, we obtain
$$G_B(x, y)\geq c_9 G_B(x_0, y)\geq c_{10}\mid x_0-y\mid^{2-d}\geq c_{11}\mid x-y\mid^{2-d}.$$
 This finishes the proof of the lemma.
\end{proof}
Recall Notation \ref{not1}  where   $\phi$ was the smooth function describing the boundary $\partial B.$ For every $Q\in\partial B$ and $x\in B(Q, R_1)\cap B,$ we define $$\rho_Q(x) = x_d - \phi_Q(\tilde{x}),$$
where $(\tilde{x}, x_d)$ are the coordinates of $x$ in $CS_Q.$ 
\begin{notation} \label{not2} For a point
$x\in B,$ let $x^*$ be such that $\delta_B(x) = \mid x - x^*\mid .$ Let
$A_r(x^*)$ be such that $\delta_B(A_r(x^*))\geq \Lambda r$ and $\mid A_r(x^*) -
x^*\mid  = r.$ The constant $\Lambda$ depends only on the Lipschitz nature of
$B.$ For $0 < r < R_1 ,$ define
\begin{equation}\label{eqx}
\tilde x_r=\left\{\begin{array}{ll} x,& \delta_B(x)\geq\delta_B(A_r(x^*)),\\
A_r(x^*)& \delta_B(x) < \delta_B(A_r(x^*)).\end{array}\right.
\end{equation}
We remark that in the case of a ball, there is a canonical way to choose $A_r(x^*).$ 
\end{notation}

\begin{proposition}\label{3G}
Let $G_B$ denote the Green function for the
process $X_t$ killed on exiting $B.$ Then, there exists a positive constant $c_1$ such that
\begin{equation}\label{3gineq}
\frac{G_B(x, y) G_B(y, z)}{G_B(x, z)}\leq c_1\left[\frac{1}{\mid x - y\mid^{d-2}} + \frac{1}{ \mid y - z\mid^{d-2}}\right] \quad x, y, z\in B.
\end{equation}
\end{proposition}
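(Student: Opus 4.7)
The plan is to reduce by symmetry to the case $|x-y|\le |y-z|$ and then combine the Green function estimates of Lemma \ref{G_Bbd} with the Harnack inequality (Proposition \ref{hq0}), the Carleson estimate (Theorem \ref{Carleson}), and the Boundary Harnack Principle (Theorem \ref{BdryHarnack}), mirroring the approach of \cite{cskv} and \cite{cfz}.

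\textbf{Step 1: Reduction.} Using $G_B(u,v)=G_B(v,u)$ and the symmetry of the right-hand side of \eqref{3gineq} under the interchange $x\leftrightarrow z$, we may assume without loss of generality that $|x-y|\le |y-z|$. Then $|x-z|\le 2|y-z|$ and $|x-y|^{2-d}\ge |y-z|^{2-d}$, so the right-hand side of \eqref{3gineq} is comparable to $|x-y|^{2-d}$ and it suffices to prove
$$\frac{G_B(x,y)\,G_B(y,z)}{G_B(x,z)}\le\frac{c}{|x-y|^{d-2}}.$$
Applying the upper bound $G_B(x,y)\le c_1 |x-y|^{2-d}$ from Lemma \ref{G_Bbd}, this reduces further to showing
$$G_B(y,z)\le C\,G_B(x,z)$$
for a uniform constant $C$ whenever $|x-y|\le|y-z|$.

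\textbf{Step 2: Interior case.} If $2|x-z|\le \delta_B(x)\wedge \delta_B(z)$, then the lower bound in Lemma \ref{G_Bbd} yields $G_B(x,z)\ge c_2 |x-z|^{2-d}$. Combining with the upper bound $G_B(y,z)\le c_1|y-z|^{2-d}$ and the inequality $|x-z|\le 2|y-z|$ gives $G_B(y,z)\le 2^{d-2}(c_1/c_2)\,G_B(x,z)$, as desired.

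\textbf{Step 3: Boundary case.} When $x$ (or $z$) is close to $\partial B$ relative to $|x-z|$, I would replace it by the reference point $\tilde x_r = A_r(x^*)$ of Notation \ref{not2} (with $r$ comparable to $|x-z|$, small enough that the ball $B(x^*, r)$ excludes $z$). The function $w\mapsto G_B(w,z)$ is non-negative, $\mathcal L$-harmonic in $B\setminus\{z\}$, and vanishes continuously on $\partial B$, so the Boundary Harnack Principle (Theorem \ref{BdryHarnack}) applies on $B\cap B(x^*, r)$ and yields
$$\frac{G_B(x,z)}{\delta_B(x)}\,\ge\,c_3\,\frac{G_B(\tilde x_r,z)}{\delta_B(\tilde x_r)}.$$
Since $\tilde x_r$ sits at a controlled distance from $\partial B$, Step 2 applies at $(\tilde x_r, z)$, and chaining with the above inequality recovers $G_B(y,z)\le C\,G_B(x,z)$. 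The symmetric situation where $z$ is close to $\partial B$ is handled by applying BHP to $w\mapsto G_B(x,w)$. When $y$ is additionally close to $\partial B$, the Carleson estimate (Theorem \ref{Carleson}) is invoked to compare $G_B(y,z)$ with the value at an analogous reference point for $y$, from which the same comparison follows.

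\textbf{Main obstacle.} No single estimate in the argument is particularly deep; the core difficulty is the bookkeeping of cases. One must carefully partition the configurations of $(x,y,z)$ relative to $\partial B$ so that in each sub-case BHP is applied in a region that excludes the relevant pole of $G_B(\cdot,\cdot)$, and the chain of comparisons through the reference points $\tilde x_r$ of Notation \ref{not2} is mutually consistent. As in the parallel argument for the operator \eqref{lla} in \cite{cskv}, the proof is essentially a case-by-case bootstrap between the interior lower bound of Lemma \ref{G_Bbd} and the boundary comparison supplied by Theorems \ref{Carleson} and \ref{BdryHarnack}.
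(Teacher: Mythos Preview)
Your reduction in Step~1 is too aggressive and the resulting target inequality is false. Once you replace $G_B(x,y)$ by the crude upper bound $c_1|x-y|^{2-d}$, you discard the boundary decay of $G_B(x,\cdot)$ in the variable $x$; the reduced claim $G_B(y,z)\le C\,G_B(x,z)$ (uniformly for $|x-y|\le|y-z|$) does not hold. For a concrete counterexample in $B=B(0,1)$ take $z=0$, $y=(\tfrac12,0,\dots,0)$, $x=(1-\epsilon,0,\dots,0)$: then $|x-y|\le|y-z|$, $G_B(y,z)\asymp 1$, while $G_B(x,z)\asymp\delta_B(x)=\epsilon\to 0$. The original 3G quotient stays bounded only because $G_B(x,y)$ \emph{also} carries a factor $\delta_B(x)$, and your reduction has thrown that factor away. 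Step~3 cannot repair this: applying Theorem~\ref{BdryHarnack} to $G_B(\cdot,z)$ alone produces $G_B(x,z)\ge c\,\tfrac{\delta_B(x)}{\delta_B(\tilde x_r)}G_B(\tilde x_r,z)$, which still leaves an uncontrolled $\delta_B(x)$ in the denominator.

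The paper's proof avoids this by never separating $G_B(x,y)$ from $G_B(x,z)$ near the boundary. Its key device is a Claim that if the 3G inequality holds for $(\tilde x_r,y,z)$ then it holds for $(x,y,z)$, proved by applying Theorem~\ref{BdryHarnack} simultaneously to $G_B(\cdot,y)$ and $G_B(\cdot,z)$ so that the $\delta_B(x)$ factors cancel in the ratio $G_B(x,y)/G_B(x,z)$. The case analysis is then organized by the size of $\delta_B(x)$ relative to $|x-z|$ (with $\delta_B(x)\le\delta_B(z)$), and each non-interior case is pushed back to the interior case via this Claim or via the Carleson estimate applied to $G_B(y,\cdot)$. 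Your outline has the right ingredients (Lemma~\ref{G_Bbd}, Theorem~\ref{Carleson}, Theorem~\ref{BdryHarnack}), but you must apply BHP to the \emph{ratio} of two harmonic functions rather than to one of them in isolation.
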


\begin{proof}
We first prove the following claim.

{\bf Claim:} For $0 < r < R_1$ suppose $\mid x - y\mid  > 3r$ and $\mid x - z\mid  > 3r.$ If
(\ref{3gineq}) holds for $(\tilde x_r, y , z),$ then it holds for $(x,
y, z).$

\vspace{0.1in}

\noindent \textbf{Proof of Claim:}
If $\delta_B(x)\geq\delta_B(A_r(x^*)),$ $\tilde x_r = x,$ so the claim
holds trivially. So assume that $\delta_B(x) < \delta_B(A_r(x^*)).$ In
this case, $c_1 \leq \frac{\delta_B(\tilde x_r)}{r} \leq c_2$ and by
Theorem \ref{BdryHarnack} $$\frac{G_B(\tilde x_r, y)}{G_B(\tilde x_r,
  z)}\leq c_3 \frac{G_B(x, y)} {G_B(x,z)} \leq c_4 \frac{G_B(\tilde
  x_r, y)}{G_B(\tilde x_r, z)}. $$ However
$$\mid \tilde x_r-y\mid \geq \mid x-y\mid  - \mid x - \tilde x_r\mid \geq \mid x-y\mid  -r\geq\frac{2}{3}\mid x-y\mid .$$ This implies that (\ref{3gineq}) holds for $(x , y, z).$
\qed

Without loss of generality we may assume that $\delta_B(x)\leq\delta_B(z).$  Let $c_1 >0$ be such that $c_1\mid x-z\mid < R_1$ for all $x , z\in B.$ Set $c_2 = (\frac{1}{c_1} + 2).$

\noindent {\bf Case 1.} $\mid x-z\mid \leq \frac{c_2}{\Lambda}\delta_B(x).$ \\ If
$\mid x-z\mid \leq\delta_B(x)/2,$ then by (\ref{lower}) we have
$G_B( x, z)\geq c_3 \mid x-z\mid^{2-d}.$ Then using (\ref{upper}) we
 conclude that
$$\frac{G_B(x, y) G_B(y,z)}{G_B(x, z)}\leq c_4 \frac{\mid
   x-y\mid^{2-d}\mid y-z\mid^{2-d}}{\mid x-z\mid^{2-d}},$$ which
 implies (\ref{3gineq}). On the other hand, when $\mid x-z\mid
 >\delta_B(x)/2,$ select a point $\tilde z$ with $\mid x-\tilde z\mid =
 \delta_B(x)/2.$ Then by the Harnack inequality (Proposition \ref{hq0})
 $$ c_5 \leq \frac{G_B(x, z)}{G_B(x, \tilde z)}\leq \frac{1}{c_5} $$ and the lower bound in  
 (\ref{lower}) we have $G_B(x, \tilde z)\geq c_6\mid x-\tilde
 z\mid^{2-d}.$ Now $\delta_B(x)/2 < \mid x-z\mid < c_7\delta_B(x),$ and
 $\mid x-\tilde z\mid = \delta_B(x)/2.$ Thus $$\mid x-\tilde z\mid \leq 
 \mid x-z\mid  \leq c_8 \mid x-\tilde z\mid$$  and (\ref{3gineq}) holds.

 {\bf Case 2.} $\mid x-y\mid  < \frac{c_1 + 1}{\Lambda c_1}  \delta_B(x), \mid x-z\mid > \frac{c_2}{\Lambda} \delta_B(x).$ 

 In this case, $\mid z -x^*\mid \geq (\frac{c_2}{\Lambda} -
 1)\delta_B(x)$ and $\mid y -x^*\mid < (1+ \frac{c_1 + 1}{\Lambda
   c_1})\delta_B(x).$ Now when $\delta_B(y) < \delta_B(x),$ we use the
 Carleson estimate (\ref{Carleson}) to conclude $G_B(y , z)\leq c_9
 G_B(A_{\delta_B(x)}(y^*), z).$ However using a standard chain of balls argument
 and the Harnack inequality (Proposition \ref{hq0}) in $B\setminus{\{z\}}$
 from $A_{\delta_B(x)}(y^*)$ to $x$ (with length of chain independent of $\delta_B(x)$), we obtain
 $$c_{10} \leq \frac{G_B(x, z)}{ G_B(A_{\delta_B(x)}(y^*), z)} \leq  c_{11}.$$
 Thus $G_B(y, z)\leq c_{12}G_B(x, z)$ and with upper bound (\ref{upper})
 applied to $G_B(x , y)$ we obtain (\ref{3gineq}).

{\bf Case 3.} $\mid x-z\mid > \frac{c_2}{\Lambda}\delta_B(x),$ $\mid
y-z\mid > 2\mid x-z\mid , \mid x-y\mid > \frac{c_1 + 1}{\Lambda c_1}\delta_B(x).$

Set $r = c_1\mid x-z\mid $ so that $r < R_1.$ Then $\mid y-z\mid \geq
3r,$ $\mid x-z\mid \geq 3r$ and by earlier claim, it suffices to
consider $(x, y, \tilde z_r).$ Observing that $$\mid x-y\mid \geq \mid
y-z\mid - \mid x-z\mid \geq \mid x-z\mid \geq 3r$$ and
$$\mid x-\tilde z_r\mid \geq \mid x-z\mid  - r\geq (9-1)r\geq 3r$$
Again the claim proved earlier will imply that  it suffices to consider $(\tilde x_r, y, \tilde z_r).$ But 
$$ \mid \tilde x_r - \tilde z_r\mid \leq \mid x-z\mid  + 2r\leq(c_1^{-1} + 2)r\leq \frac{c_2}{c_1}(\delta_B(\tilde x_r)\wedge\delta_B(\tilde z_r)),$$
that is, we are back in {\bf Case $1$} and (\ref{3gineq}) holds.

{\bf Case 4.} $\mid x-z\mid  > \frac{c_2}{\Lambda}\delta_B(x),$ $\mid x-y\mid  > \frac{c_1 + 1}{\Lambda c_1} \delta_B(x), \mid y-z\mid  < 2\mid x-z\mid .$

Set $r= c_1\mid x-y\mid .$ Then observing that $\mid x-y\mid =
\frac{r}{c_1}$ and $$\mid x-z\mid \geq \mid x-y\mid - \mid y-z\mid
\geq \mid x-y\mid - 2\mid x-z\mid $$ which implies that $\mid x-z\mid
\geq\frac{1}{3}\mid x-y\mid \geq 3r,$ we see that Claim
applies again. Having switched to $(\tilde x_r, y, z),$ we note that
$$\mid \tilde x_r - y\mid  < \mid x-y\mid  + r = \frac{r}{c_1^{-1}} + r \leq
\frac{c_1 + 1}{\Lambda c_1}\delta_B(\tilde x_r).$$ If $\delta_B(\tilde x_r) <
\delta_B(z)$ we are in Case $1$ or Case $2$ and we are done. If
$\delta_B(\tilde x_r) > \delta_B(z)$ we would be done if $(z, y, \tilde
x_r)$ satisfies the conditions of either of the first three cases. So
we may assume the worst case scenario, that$(z, y, \tilde x_r)$ falls
into Case $4.$ However in that case, we may set $s = c_1\mid y-z\mid $ and use
the same argument as the first part of Case $4$ and it will follow
that $(\tilde z_s, y, \tilde x_r)$ or $(\tilde x_r, y, \tilde z_s)$
satisfies Case $1$ or $2$ and this completes the proof.
\end{proof}

We are now ready to prove  Proposition \ref{condgauge}. 

\vspace{0.1in}

\noindent \textbf{Proof of Proposition \ref{condgauge}:}
 Theorem 3.1 in \cite{chensong} contains a proof of  the proposition provided $q$ satisfies
 
\vspace{0.05in}

\noindent {\bf Assumption (Q1):}{\it 
There exists a Borel
set $K\subset B$ of finite measure and a $\delta >0$ such that
$$\beta(q):=\sup_{F\subset K: m(F)<\delta}\left(\sup_{(x,z)\in(B\times
  B)\setminus D}\int_{(B\setminus K)\cup
  F}\frac{G_B(x,y)G_B(y,z)}{G_B(x,z)}\mid q(y)\mid dy\right)< 1,$$
where $D$ denotes the diagonal $D = \{(x, x): x\in B\}$.  } 

\vspace{0.1in}

\noindent Using Proposition \ref{3G} there is a $c_3 >0$ such that
\begin{equation} \label{r3g}
\frac{G_B(x,y)G_B(y,z)}{G_B(x,z)} \leq c_3 \left ( \mid x-y \mid^{2-d} + \mid y-z \mid ^{2-d} \right )
\end{equation}
whenever $x,y,z \in B$. Therefore if $q: \mathbb{R}^d \rightarrow \mathbb{R}$ is such
that it satisfies (Q) then $q$ satisfies (Q1) thereby finishing the proof.
\qed

\vspace{0.1in}

\noindent We conclude the section with an estimate on the boundary behavior of
the Green Function. We shall prove that the Green function $G_B(x, y)$
decays atleast like $\delta_B(y)$ as $y$ approaches the boundary
$\partial B.$
\begin{proposition}\label{deltabd}
 Let $x$ be a fixed point in $B.$ Then, there exists a positive
 constant $c_1\equiv c_1(\alpha, R_1, M_1)$ such that for $r\in (0,R_1/2],$
   and for $Q\in\partial B,$
$$G_B(x, y)\geq c_1\frac{\delta_B(y)}{r},$$
for $y\in B\cap B(Q, rr_0/8).$ 
\end{proposition}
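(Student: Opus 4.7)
The plan is to apply the uniform Boundary Harnack Principle (Theorem \ref{BdryHarnack}) to the function $v(y) := G_B(x,y)$, which is $\mathcal{L}$-harmonic in $B\setminus\{x\}$ and vanishes continuously on $\partial B$, and to compare its boundary behavior against its value at an interior corkscrew reference point. Since $x$ is fixed in the interior of $B$, for every $r \in (0, R_1/2]$ (shrinking $r$ if necessary so that $x\notin B(Q,r)$) the function $v$ is $\mathcal{L}$-harmonic in $B\cap B(Q,r)$ and vanishes continuously on $B^c \cap B(Q,r)$.

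First I would pick the reference corkscrew point $y_r := A_{r/4}(Q)$ from Notation \ref{not2}, for which $\Lambda\,r/4 \leq \delta_B(y_r) \leq r/4$ by construction. Since $rr_0/8 < r/2$, any $y \in B\cap B(Q, rr_0/8)$ and the point $y_r$ both lie in $B\cap B(Q, r/2)$, so Theorem \ref{BdryHarnack} applied to $v$ at scale $r$ yields
$$ \frac{G_B(x,y)}{\delta_B(y)} \;\geq\; c^{-1}\,\frac{G_B(x,y_r)}{\delta_B(y_r)} \;\geq\; \frac{c'}{r}\, G_B(x,y_r), $$
with $c \equiv c(\alpha, R_1, M_1)$.

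It then remains to obtain a lower bound of the form $G_B(x, y_r) \gtrsim r$, with constant depending only on $x, \alpha, R_1, M_1$. To this end I would apply Theorem \ref{BdryHarnack} a second time, now at the \emph{fixed} scale $R_1/2$, comparing $y_r$ against a fixed bulk reference point $z_0 \in B$ with $\delta_B(z_0) \asymp R_1$ and $|x - z_0| \asymp R_1$. At the bulk point $z_0$, the interior lower bound \eqref{lb1} of Lemma \ref{G_Bbd} together with a Harnack chain of bounded length (using Proposition \ref{hq0}) connecting $z_0$ to $x$ gives $G_B(x, z_0) \geq c_0'$. The second BHP application then transfers this lower bound to $y_r$ with the decay $\delta_B(y_r)/R_1 \asymp r/R_1$, so that $G_B(x, y_r) \gtrsim c_0\, r$. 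Plugging this back into the first estimate gives the desired conclusion.

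The chief difficulty I anticipate is producing the lower bound on $G_B(x, y_r)$ with a factor of $r$ that is uniform in $r$: a direct Harnack chain via Proposition \ref{hq0} from $y_r$ to a bulk point would have length $\sim \log(R_1/r)$ and yield a constant that collapses as $r \to 0$. The resolution, as sketched above, is to invoke the uniform BHP itself rather than Proposition \ref{hq0} to bridge the scales between $y_r$ and $z_0$, exploiting precisely the scale-uniformity built into Theorem \ref{BdryHarnack}. The Carleson estimate (Theorem \ref{Carleson}) and the boundary exit-distribution bounds of Proposition \ref{bdryest2} are the additional ingredients one may need to make the chain of comparisons rigorous in the thin boundary layer $D_Q(2\delta_0, r_0)$.
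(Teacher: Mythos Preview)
Your approach is correct but differs from the paper's in two respects.

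First, the paper does not invoke Theorem~\ref{BdryHarnack} as a black box. Instead it repeats, for the specific function $G_B(x,\cdot)$, the probabilistic lower-bound step \eqref{bhp1} that appears inside the proof of the BHP: write $G_B(x,y)=\mathbb{E}^y\big[G_B(x,X_{\tau_{D(1,1)}})\big]$, restrict to the event $\{X_{\tau_{D(1,1)}}\in D(2,1)\}$, use the interior Harnack inequality on $D(2,1)$ to pull out $G_B(x,y_0)$, and then apply the exit-distribution lower bound from Proposition~\ref{bdryest2} to get $\mathbb{P}^y(X_{\tau_{D(1,1)}}\in D(2,1))\geq c\,\delta_B(y)/r$. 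Your route via Theorem~\ref{BdryHarnack} yields the same inequality $G_B(x,y)\geq c\,G_B(x,y_r)\,\delta_B(y)/r$, so at this stage the difference is only packaging: you quote the BHP, the paper re-derives the relevant half of it.

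Second, the paper simply \emph{stops} at $G_B(x,y)\geq c_1\,G_B(x,y_0)\,\delta_B(y)/r$ and absorbs $G_B(x,y_0)$ into the constant; it does not carry out anything like your second step. Your additional argument---a second BHP application at the fixed scale $R_1/2$ to transfer the value at the corkscrew point $y_r$ to a bulk point, yielding $G_B(x,y_r)\gtrsim r$---is what is actually needed if one wants the constant $c_1$ to be independent of $r$, as the proposition claims. (Minor care: your ``fixed bulk reference point'' $z_0$ must in fact be chosen as the scale-$R_1$ corkscrew point associated to $Q$, so that both $y_r$ and $z_0$ lie in $B\cap B(Q,R_1/4)$; uniformity in $Q$ then follows from a bounded-length Harnack chain for $G_B(x,\cdot)$ inside the region $\{\delta_B\gtrsim R_1\}$.) In short, your proof is more complete than the paper's on this point, at the cost of invoking the heavier Theorem~\ref{BdryHarnack} rather than going directly through Proposition~\ref{bdryest2}.
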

\begin{proof}
For any $r\in (0,R_1/2]$ and $y\in B\cap B(Q, rr_0/8),$ let $Q_y$ be
  the point so that $\mid y - Q_y\mid  = \delta_B(y)$ and let $y_0 = Q_y +
  \frac{r}{8}\frac{(y- Q_y)}{\mid y- Q_y\mid }.$ Choose a smooth function
  $\phi:\mathbb{R}^{d-1}\rightarrow\mathbb{R}$ satisfying $\phi(0) =
  \nabla\phi(0) = 0,$ and $\mid  \nabla\phi(x) - \nabla\phi(y)\leq
  M_1\mid x-y\mid $ and an orthonormal coordinate system $\mbox{CS} \equiv CS_Q$ with its
  origins at $Q_y$ so that $B(Q_y, R_1)\cap B = \{y = (y_d, \tilde
  y)\in B(0, R_1)\hspace{0.05in}\mbox{in CS}\hspace{0.05in}: y_d >
  \phi(\tilde y)\}.$

In the above coordinate system $\tilde y = 0,$ and $y_0 = (0, r/8).$
For $a_1, a_2 >0$ define
$$D(a_1, a_2) = \{y\hspace{0.05in}\mbox{in CS}\hspace{0.05in}: 0 < y_d
- \phi(\tilde y) < a_1\frac{r\delta_0}{8}, \mid \tilde y\mid <
a_2\frac{rr_0}{8}\}.$$ Then it is easy to see that $D(2, 2)\subset B\cap
B(Q, r/2).$ Since $G_B(x, \cdot)$ is a harmonic function in $B\cap B(Q,
r)$ and vanishes continuously in $B^c\cap B(Q, r),$ it is regular
harmonic in $B\cap B(Q, r/2)$ and hence also in $D(2, 2)$ 
( can be shown as in  Lemma $4.2$ \cite{cskv}). Thus by the Harnack inequality
\begin{eqnarray}\label{L25}
G_B(x, y)& = & \mathbb{E}^y\left(G_B(x, X_{\tau_{D(1,1)}}) \right) \nonumber\\
&\ge & \mathbb{E}^y\left(G_B(x, X_{\tau_{D(1,1)}}); X_{\tau_{D(1,1)}}\in D(2, 1) \right) \nonumber\\
&\ge & c_1 G_B(x, y_0)\mathbb{P}^y\left(X_{\tau_{D(1,1)}}\in D(2, 1) \right)\nonumber\\
&\ge & c_1 G_B(x, y_0)\frac{\delta_B(y)}{r},
\end{eqnarray}
where the last inequality is a consequence of Proposition \ref{bdryest2}. This concludes the proof.\end{proof}

\section{Martin Boundary, Density of Harmonic measure} \label{MBDH}
 Our aim in this section is  to prove Proposition \ref{conditional}. For this
 we need to understand the exit distribution of the process $X$. This
 requires us to introduce the notion of Martin boundary and show that
 it can be identified with the Euclidean boundary in the case of a
 ball. We also prove that the Martin Kernel gives the density of
 harmonic measure.

The notations and techniques are borrowed entirely from the literature
(\cite{Bass1}, \cite{cskv}, \cite{bb}). Fix $x_0\in B.$ Define $$ M(x, y) = \dfrac{G_B(x, y)}{G_B(x_0, y)}, x,
y\in B.$$ Note that $M(x_0, y) = 1$ for $y\in B,$ $y\neq x_0.$ 
\begin{notation}We
define the oscillation of a function $f: \mathbb{R}^d \rightarrow \mathbb{R}$ on a set
$A\subset \mathbb{R}^d$ by $$Osc_{A}f = \sup_{x \in A} f(x) - \inf_{x \in A} f(x).$$ Let $x \in
\bar B$. Define the box
$$ Q(x, a, b) = \{y\in B: \rho(y) < a, \mid (y_1, y_2, \ldots, y_{d-1}) - (x_1, x_2, \ldots, x_{d-1})\mid  < b\},$$
for $ a > 0, b >0$ and upper side of the box
 $$U(x,a ,b) = \{y\in\partial Q(x, a, b): \rho(y) = a\}. $$
\end{notation}
Our first result concerns the regularity of $M(x, \cdot)$ in a
neighborhood of the boundary $\partial B.$
\begin{proposition}\label{Hcont}
$M(x, y)$ is uniformly continuous in $y$ for $y$ in a neighborhood of $\partial B.$
\end{proposition}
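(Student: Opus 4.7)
The plan is to derive Hölder-type continuity of $M(x,\cdot)$ near $\partial B$ via the Uniform Boundary Harnack Principle (Theorem \ref{BdryHarnack}), and then use compactness of $\partial B$ to upgrade to uniform continuity in a neighborhood of the boundary. Fix $Q \in \partial B$ and choose $r_Q > 0$ small enough so that the closures of $\{x, x_0\}$ lie outside $B(Q, 2r_Q)$. Then both $y \mapsto G_B(x, y)$ and $y \mapsto G_B(x_0, y)$ are non-negative, bounded, $\mathcal L$-harmonic in $B \cap B(Q, r_Q)$, and vanish continuously on $B^c \cap B(Q, r_Q)$. In particular, the BHP applies to either of these functions, and more importantly, to any non-negative linear combination of them that is $\mathcal L$-harmonic and vanishes on $B^c$ near $Q$.

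The key step is an oscillation-reduction lemma. Consider a geometrically shrinking sequence of boxes $Q_k = Q(Q, a_k, b_k)$ (as in the Notation above) with $a_k, b_k \asymp 2^{-k} r_Q$ nested around $Q$, chosen so that each $Q_k \subset B \cap B(Q, r_Q/2)$. Set
$$m_k = \inf_{y \in Q_k} M(x, y), \qquad M_k = \sup_{y \in Q_k} M(x, y).$$
The two functions
$$u_k(y) := G_B(x, y) - m_k\, G_B(x_0, y), \qquad v_k(y) := M_k\, G_B(x_0, y) - G_B(x, y)$$
are non-negative, $\mathcal L$-harmonic in $B \cap B(Q, r_Q)$, and vanish continuously on $B^c \cap B(Q, r_Q)$. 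Applying the Uniform Boundary Harnack Principle to $u_k$ (resp. $v_k$) against $G_B(x_0, \cdot)$ gives a universal constant $c_1 > 1$, independent of $k$, such that
$$\frac{M(x, y_1) - m_k}{M(x, y_2) - m_k} \leq c_1, \qquad \frac{M_k - M(x, y_1)}{M_k - M(x, y_2)} \leq c_1 \qquad (y_1, y_2 \in Q_{k+1}).$$
Combining these two inequalities in the standard way (take $y_1$ realizing the sup and $y_2$ realizing the inf, or vice versa, on $Q_{k+1}$) yields
$$\operatorname{Osc}_{Q_{k+1}} M(x, \cdot) \;\leq\; \frac{c_1 - 1}{c_1 + 1}\, \operatorname{Osc}_{Q_k} M(x, \cdot).$$
Iterating produces $\operatorname{Osc}_{Q_k} M(x, \cdot) \leq \eta^k \operatorname{Osc}_{Q_0} M(x, \cdot)$ with $\eta = (c_1-1)/(c_1+1) \in (0,1)$, and the oscillation on $Q_0$ is finite by the pointwise bound for the ratio coming from one application of BHP. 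This is the standard Hölder-type modulus of continuity that forces $M(x, \cdot)$ to extend continuously to $\partial B$ in a neighborhood of $Q$.

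Since $\partial B$ is compact, we can cover it by finitely many such neighborhoods $B(Q_i, r_{Q_i}/2)$, obtaining a uniform modulus of continuity for $M(x, \cdot)$ on $\bigcup_i \bigl( B \cap B(Q_i, r_{Q_i}/2) \bigr)$, which is a neighborhood of $\partial B$ in $B$. This gives the claimed uniform continuity. The main technical obstacle is arranging the setup so that the BHP is applied to \emph{both} $u_k$ and $v_k$ uniformly across all scales $k$: this requires the boxes $Q_k$ to remain inside $B \cap B(Q, r_Q/2)$ (so that BHP applies with a scale-independent constant) and to stay uniformly away from the fixed points $x$ and $x_0$ (so that $u_k, v_k$ are genuinely harmonic, not merely superharmonic). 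Both conditions are arranged by the initial choice of $r_Q$ and the nesting of the $Q_k$, exactly as in the corresponding arguments in \cite{cskv}.
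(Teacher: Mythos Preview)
Your proposal takes essentially the same route as the paper: iterate the Boundary Harnack Principle (Theorem \ref{BdryHarnack}) on shrinking boxes $Q_k$ around a boundary point to force geometric decay of the oscillation of $M(x,\cdot)=G_B(x,\cdot)/G_B(x_0,\cdot)$, and conclude H\"older (hence uniform) continuity near $\partial B$. The only cosmetic difference is that the paper normalizes to a single auxiliary function $u=\alpha G_B(x,\cdot)+\beta h$ with $Osc_{Q_k}(u/h)=1$ (flipping to $h-u$ if $u(z_k)/h(z_k)\le 1/2$) and reads off the reduction factor $\gamma=1-c_1/2$, whereas you track the pair $u_k,v_k$ and arrive at $(c_1-1)/(c_1+1)$; these are interchangeable textbook presentations of the same oscillation lemma, and your closing compactness step merely makes explicit what the paper leaves implicit.

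One point to watch: you assert that $u_k,v_k$ are non-negative, but $m_k=\inf_{Q_k}M(x,\cdot)$ only forces $u_k\ge 0$ on $Q_k$; elsewhere in $B$ the function $u_k$ may be negative. Since Theorem \ref{BdryHarnack} is stated for functions non-negative on all of $\mathbb{R}^d$---a hypothesis that genuinely matters in the non-local setting, where jumps sample the whole space---the BHP does not literally apply to $u_k$ as written. The paper's own proof is equally informal on exactly this point (its normalized $u$ has the identical defect), so you are not missing anything relative to the paper; but be aware that a fully rigorous version of this iteration in the jump case requires either a ratio-form BHP proved directly, or the more careful scale-by-scale argument as in \cite{bb0} or \cite{cskv}.
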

\begin{proof}
 Pick $w\in\partial B$ and choose $r$ small enough
so that $B\cap B(w, r)$ is the intersection of $B(w, r)$ with the
region above the graph of a Lipschitz function. We note that $r$
depends on $B,$ but can be chosen independently of $w.$ Fix a
coordinate system as in Notation \ref{not1} and pick
$k_0$ large enough that $Q(w, 2^{-k_0}, 2^{-k_0})\subset B(w, r)$ and
$x, x_0\notin Q(w, 2^{-k_0}, 2^{-k_0}).$

Write $Q_k = Q(w, 2^{-k}, 2^{-k})$ and $h(y) = G_B(x_0, y).$ We will
show that the oscillation of $\frac{G_B(x, \cdot)}{h}$ on $Q_{k+1}$ is
controlled by the oscillation of $\frac{G_B(x, \cdot)}{h}$ on $Q_k.$ Both
$h$ and $G_B(x, \cdot)$ are harmonic functions on $B-\{x, x_0\}$ that
vanish on $\partial B.$ By the Harnack inequality (Proposition \ref{hq0})
they both are nonzero and finite for any $y\in Q_{k_0}.$ The Boundary
Harnack Principle (Theorem \ref{BdryHarnack}) in $Q_k,$ now gives that $\frac{G_B(x, \cdot)}{h}$ is bounded above and below by positive constants.

Fix $k\geq k_0.$ For $y \in B$, define $u(y) = \alpha G_B(x, y) + \beta
h(y),$ where $\alpha$ and $\beta$ are real numbers so that
$$ \sup_{Q_k}\left(\frac{u}{h} \right) = 1, \,\,
\inf_{Q_k}\left(\frac{u}{h} \right) = 0, \mbox{ implying } Osc_{Q_k}\left(\frac{u}{h}\right) = 1.$$
Clearly $u$ is harmonic in $Q_k.$ Now pick $z_k\in U(w, 2^{-k}, 2^{-k}).$ If
$\frac{u(z_k)}{h(z_k)}\leq 1/2,$ replace $u$ by $h-u.$ So we may
assume $\frac{u(z_k)}{h(z_k)}\geq 1/2$ without changing the supremum
and infimum of $u/h$ on $Q_k.$ By Theorem \ref{BdryHarnack}, if $y\in Q_{k+1}$
$$ \left(\frac{u}{h} \right)(y)\geq c_1\left(\frac{u}{h}
\right)(z_k)\geq c_1/2.$$ 
So $$Osc_{Q_{k+1}}\left(\frac{u}{h} \right)\leq 1-c_1/2 := \gamma$$
Undoing the algebra, we have
$$Osc_{Q_{k+1}}\left(\frac{G_B(x, \cdot)}{h(\cdot)} \right)\leq\gamma
Osc_{Q_{k}}\left(\frac{G_B(x, \cdot)}{h(\cdot)} \right)$$ and $\gamma <1.$ Now
$\frac{G_B(x, \cdot)}{h(\cdot)}$ is bounded by $c_2$ on $Q_{k_0}$ by Theorem \ref{BdryHarnack}. So $$ Osc_{Q_{k+1}}\left(\frac{G_B(x, \cdot)}{h(\cdot)} \right)\leq
c_2\gamma^{k-k_0},$$ or in other words,
$$\mid \frac{G_B(x, y)}{h(y)} - \frac{G_B(x, y')}{h(y')}\mid \leq c_3\gamma^{k}$$
if $y, y'\in Q(w, 2^{-k}, 2^{-k}).$ This implies the H\"{o}lder continuity of $\frac{G_B(x, y)}{h(y)}$ which in turn implies uniform continuity of $M(x \cdot)$.
\end{proof}
One consequence of Proposition \ref{Hcont} is that, if $y\to
z\in\partial B,$ then $\dfrac{G_B(x, y)}{G_B(x_0, y)} = M(x, y)$
converges.  We denote this limit by $M(x, z)$ and refer to it as the Martin Kernel. The next order of business is the so called Martin
Boundary. For the existence of the Martin Boundary, we refer the
reader to \cite{kunwat}. We summarize the result below as a lemma.
\begin{lemma}\label{MartinBdry}{[Kunita-Watanabe]}
There is a compactification $B_M$ of $B,$ unique upto homeomorphism, such that $M(x, y)$ has a continuous extension to $B\times (B_M\setminus\{x_0\})$ and $M(\cdot, z_1) = M(\cdot, z_2)$ iff $z_1 = z_2.$
\end{lemma}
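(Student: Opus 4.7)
The plan is to carry out the classical Kunita--Watanabe construction of the Martin compactification, adapted to the non-local operator $\mathcal L$. Only two of the ingredients are non-trivial in our setting, and both have already been set up in this paper: the regularity of $M(x,\cdot)$ up to the Euclidean boundary provided by Proposition \ref{Hcont}, and the Boundary Harnack Principle of Theorem \ref{BdryHarnack}.

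First I would fix a countable dense set $\{x_n\}_{n\geq 1}\subset B\setminus\{x_0\}$ and introduce
$$\rho(y,y') = \sum_{n\geq 1} 2^{-n}\,\frac{|M(x_n,y)-M(x_n,y')|}{1+|M(x_n,y)-M(x_n,y')|}, \qquad y,y'\in B.$$
Because $G_B(\cdot,y)$ and $G_B(\cdot,x_0)$ are strictly positive and $\mathcal L$-harmonic away from their poles, the Harnack inequality (Proposition \ref{hq0}) guarantees that each $M(x_n,\cdot)$ is continuous and bounded away from $0$ and $\infty$ on compact subsets of $B\setminus\{x_n\}$, so $\rho$ is a genuine metric on $B$ whose topology agrees with the Euclidean one on such compacta.

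Next I would show that $(B,\rho)$ is totally bounded. Away from $\partial B$ this is immediate from a Harnack chain. The delicate part is near the boundary: Proposition \ref{Hcont} gives a uniform (in fact H\"{o}lder) modulus of continuity for each $M(x_n,\cdot)$ up to $\partial B$, so that along any sequence $y_k\to z\in\partial B$ each coordinate $M(x_n,y_k)$ converges, and the family $\{M(x_n,\cdot)\}_n$ is equicontinuous in a neighbourhood of $\partial B$. Total boundedness of $(B,\rho)$ follows by a diagonal argument, and I declare $B_M$ to be the $\rho$-completion of $B$; it is then compact.

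To extend $M$ to $B\times(B_M\setminus\{x_0\})$, I would define $M(x_n,z)$ for $z\in B_M\setminus B$ as the limit of $M(x_n,y_k)$ along any Cauchy sequence $y_k\to z$, which exists by construction. Extension in the first variable then uses Theorem \ref{BdryHarnack} through a standard chain argument: for $x\in B$, by comparing with the nearest $x_n$ one obtains $c^{-1}M(x_n,y)\leq M(x,y)\leq cM(x_n,y)$ uniformly for $y$ close to $\partial B$, and density of $\{x_n\}$ plus Proposition \ref{Hcont} lets $M(x,\cdot)$ be defined continuously on $B_M\setminus\{x_0\}$. The separation statement $M(\cdot,z_1)=M(\cdot,z_2)\Rightarrow z_1=z_2$ is built in: if $z_1\neq z_2$ in $B_M$ then $\rho(z_1,z_2)>0$, so some coordinate $M(x_n,\cdot)$ distinguishes them, and then every $M(x,\cdot)$ does by Harnack. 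Uniqueness of $B_M$ up to homeomorphism is a soft consequence: any two compactifications with the stated properties induce a continuous bijection between themselves which, by compactness, is a homeomorphism. I expect the main technical obstacle to be the equicontinuity step; this is precisely why the oscillation-reduction argument of Proposition \ref{Hcont}, powered by the BHP, is the engine of the construction.
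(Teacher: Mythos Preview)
The paper does not actually prove this lemma: it is stated as a quotation of the abstract Kunita--Watanabe result, preceded by the sentence ``For the existence of the Martin Boundary, we refer the reader to \cite{kunwat}.'' Your proposal, by contrast, sketches the classical construction in some detail.

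Your outline is correct in its broad strokes, but you are importing more machinery than the abstract statement requires. The existence of the Martin compactification needs only that each $M(x_n,\cdot)$ be \emph{bounded} on $B$ away from $x_n$ and $x_0$, which follows already from the interior Harnack inequality (Proposition \ref{hq0}) applied to $G_B(\cdot,y)$; total boundedness of $(B,\rho)$ then comes from a diagonal extraction on the bounded sequences $(M(x_n,y_k))_k$. Neither equicontinuity up to $\partial B$, nor Proposition \ref{Hcont}, nor Theorem \ref{BdryHarnack} is needed at this stage. In the paper those tools enter only in Lemma \ref{identify} and Theorem \ref{marteqbdry}, where the abstract boundary $\partial B_M$ is \emph{identified} with the Euclidean boundary $\partial B$. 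So what you flag as ``the main technical obstacle'' is not an obstacle for this lemma at all; it is the content of the next step. What your route buys is a self-contained construction already tailored to $\bar B$; what the paper's route buys is brevity, by deferring to the general theory of \cite{kunwat} and isolating the geometric identification as a separate result.
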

The set $\partial B_M = B_M\setminus B$ is called the Martin Boundary
for $X^B.$ For $z\in\partial B_M,$ we set $M(\cdot , z) =0$ on $B^c.$ Now
$B_M$ is the smallest compact set for which $M(x, y)$ is continuous in
the extended sense in y. By Proposition \ref{Hcont}, $M(x, \cdot )$ is
uniformly continuous in a neighborhood of the boundary $\partial B,$
and so this implies that $B_M\subset\bar{B},$ and we can identify the
Martin boundary with a subset of the Euclidean boundary.
\begin{lemma}\label{identify}
Let $z_1, z_2\in\partial B.$ If  $M(\cdot , z_1)\equiv M(\cdot , z_2),$ then $z_1 = z_2.$
\end{lemma}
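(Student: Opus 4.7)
The plan is to argue by contradiction. Assume $z_1\ne z_2$ but $M(\cdot, z_1)\equiv M(\cdot, z_2)$ on $B$; let $h$ denote this common function, extended by $0$ on $B^c$. The aim is to deduce $h\equiv 0$ in $B$, contradicting $h(x_0) = M(x_0, z_1) = 1$.

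The crux is the following sublemma, which is the nonlocal analogue of the classical fact that the Poisson kernel $P(\cdot, z)$ of the ball vanishes at any boundary point $\ne z$: for every $z, z' \in \partial B$ with $z' \ne z$, $\lim_{x \to z',\, x \in B} M(x, z) = 0$. I plan to extract this directly from Theorem \ref{BdryHarnack}. Pick $r > 0$ small enough that $B(z', 2r)$ avoids both $z$ and $x_0$. For $y \in B$ close enough to $z$ that $y \notin B(z', r)$, the function $x \mapsto G_B(x, y)$ is $\mathcal L$-harmonic in $B \cap B(z', r)$ and vanishes continuously on $B^c \cap B(z', r)$, so Theorem \ref{BdryHarnack} yields
\[
\frac{G_B(x, y)}{\delta_B(x)} \leq c\,\frac{G_B(x', y)}{\delta_B(x')}, \qquad x, x' \in B \cap B(z', r/2).
\]
Fix $x' = x'(z')$ in $B \cap B(z', r/2)$ with $\delta_B(x')$ bounded below by an absolute constant, divide through by $G_B(x_0, y)$, and send $y \to z$, using Proposition \ref{Hcont} to pass through the limit. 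This produces $M(x, z) \leq C\,\delta_B(x)\,M(x', z)$, which forces $M(x, z) \to 0$ as $x \to z'$.

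Given the sublemma, for every $z' \in \partial B$ at least one of $z' \ne z_1$ or $z' \ne z_2$ holds, and applying the sublemma with the appropriate representation of $h$ yields $h(x) \to 0$ as $x \to z'$. Hence $h$ extends continuously to $\bar B$ with $h|_{\partial B} = 0$. To conclude, I apply a nonlocal maximum principle: if $h \not\equiv 0$, then since $h \geq 0$ is continuous on $\bar B$ and vanishes on $\partial B$, it attains $M := \sup_B h > 0$ at some interior point $x^* \in B$. Choose any open $U$ with $x^* \in U \subset \bar U \subset B$. Harmonicity gives $h(x^*) = \mathbb{E}^{x^*}[h(X_{\tau_U})]$. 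By the Levy system formula \eqref{levy} together with Assumption (J), $\mathbb{P}^{x^*}(X_{\tau_U} \in (\bar B)^c) > 0$, because $J(x^*,\cdot) > 0$ on $(\bar B)^c$ and $\tau_U > 0$ with positive probability, so jumps from $U$ to $(\bar B)^c$ before $\tau_U$ occur with positive probability. Since $h \leq M$ on $\bar B$ and $h \equiv 0$ on $(\bar B)^c$,
\[
h(x^*) = \mathbb{E}^{x^*}[h(X_{\tau_U})] \leq M\,\mathbb{P}^{x^*}(X_{\tau_U} \in \bar B) < M,
\]
a contradiction; so $h \equiv 0$ in $B$, contradicting $h(x_0) = 1$, and hence $z_1 = z_2$.

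The hardest step will be the sublemma, since it requires not only the uniform BHP of Theorem \ref{BdryHarnack} but also the legitimacy of exchanging the limit $y \to z$ with the BHP bound, justified via Proposition \ref{Hcont}. Once the sublemma is in hand, the maximum-principle step is short, and in fact it is precisely the positivity of $J$ in Assumption (J) --- absent in the classical local setting --- that makes jumps into $(\bar B)^c$ generic and forces vanishing of any nonnegative $\mathcal L$-harmonic function with zero boundary trace.
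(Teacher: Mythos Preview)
Your argument is correct. The sublemma coincides with the opening paragraph of the paper's proof: both invoke Theorem~\ref{BdryHarnack} to show $M(x,z)\to 0$ as $x\to z'\in\partial B$ for any $z'\ne z$ (the paper compares $G_B(x,y)/G_B(x_0,y)$ to $G_B(x,y_0)/G_B(x_0,y_0)$ for a fixed reference $y_0$, you compare $G_B(x,y)$ to $\delta_B(x)$; these are equivalent uses of the uniform BHP). The divergence is in the second step. The paper proceeds by proving uniqueness of the Martin-type representation: it shows that the only probability measure $\mu$ on $\partial B$ with $M(\cdot,z_1)=\int M(\cdot,w)\,\mu(dw)$ is $\delta_{z_1}$, by restricting $\mu$ away from $z_1$, checking that the resulting integral is harmonic (Lemma~\ref{integralHarmonic}), has zero boundary trace and vanishes on $B^c$, and then applying Lemma~\ref{vanishing}, which is proved by an increasing-subdomain approximation and dominated convergence. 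You instead observe directly that, since $z_1\ne z_2$, the sublemma applied to each representation of $h$ forces $h\to 0$ at \emph{every} boundary point, and then kill $h$ by a maximum-principle argument exploiting the strict positivity of $J$ in Assumption~(J). Both routes rely on the harmonicity of $M(\cdot,z)$ in $B$ (Lemma~\ref{chenkim}, a forward reference the paper also makes) and on interior regularity to get $h$ continuous and bounded on~$\bar B$. The paper's approach is the classical Martin-boundary script and would go through verbatim in the local case $J\equiv 0$; yours is shorter but genuinely uses the nonlocality of $\mathcal L$---if there were no jumps, $\mathbb P^{x^*}(X_{\tau_U}\in(\bar B)^c)=0$ and your contradiction would not fire.
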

\begin{proof}
First we will show that $M(\cdot , z_1)$ vanishes on $\partial B -
\{z_1\}.$ Fix $y_0\in B,$ with $y_0\neq x_0.$ Let $w\in\partial B,
w\neq z_1.$ Note that since $G_B(x, y_0)\to 0$ as $x\to w,$ given
$\epsilon >0$ there is a $\delta < \mid w-z_1\mid /4$ such that $G_B(x,
y_0)\leq\epsilon$ if $\mid x-w\mid \leq\delta.$ Now applying Theorem \ref{BdryHarnack} in
$B\setminus(B(x, \delta/2)\cup B(x_0, \delta/2))$,
$$\frac{G_B(x, y)}{G_B(x_0, y)}\leq c_1\frac{G_B(x, y_0)}{G_B(x_0,
  y_0)}\leq\frac{c_1\epsilon}{G_B(x_0, y_0)},$$ if $\mid x-w\mid \leq\delta.$
Letting $y\to z_1,$ we see that $M(x, z_1)\to 0$ as $x\to w\neq z_1.$
Furthermore the convergence is uniform on compact subsets of $\partial
B - \{z_1\}.$

We next show that there is exactly one measure $\mu = \delta_{z_1}$
supported on $\partial B$ such that $M(x, z_1) = \int M(x, w)\mu(dw).$
If we show this, then $M(x, z_1) = M(x, z_2)$ will imply that
$\delta_{z_1} = \mu = \delta_{z_2},$ in other words, $z_1 = z_2.$

So suppose that $M(x, z_1) = \int_{\partial B}M(x, w)\mu(dw),$
$\mu\neq\delta_{z_1}.$ Note that $M(x_0, z_1) = 1 = \mu(\partial B).$
If $\mu\neq\delta_{z_1}$ there exists $\epsilon$ such that
$\mu_{\epsilon} = \mu\mid _{B(z_1, \epsilon)^c}\neq 0.$ Then $v(x) = \int
M(x, w)\mu_{\epsilon}(dw)$ is a harmonic function (see lemma
\eqref{integralHarmonic} below), bounded above by $M(x, z_1).$

In the first paragraph of this lemma, we showed that if
$\mid w-z_1\mid \geq\epsilon,$ then $M(x, w)\to 0$ as $x\to w'\in\partial
B\cap B(z_1, \epsilon/2).$ On the other hand, if $x\to w'\in\partial
B\cap B(z_1, \epsilon/2)^c,$ then

$$\int M(x, w)\mu_{\epsilon}(dw)\leq\int M(x, w)\mu(dw) = M(x, z_1)\to 0$$

 We have shown so far that $\lim_{x\to z}v(x) = 0$ for all
 $z\in\partial B.$ We also know from the definition of the Martin
 kernel that $v = 0$ in $B^c.$  We shall need the following Lemma

\begin{lemma}\label{vanishing}
Let $h:\mathbb{R}^d\rightarrow\mathbb{R}$ be a bounded $\mathcal L$
harmonic function in $B.$ Suppose that $h=0$ in $B^c$ and $\lim_{x\to
  z}h(x) = 0$ for all $z\in\partial B.$ Then, $h$ is identically $0.$
\end{lemma}

 Assuming this fact, let us continue with the argument. Applying Lemma \ref{vanishing}, we infer that $v(x) = \int M(x, w)\mu_{\epsilon}(dw) = 0.$ An application of Theorem \ref{BdryHarnack} lets us conclude
 that $\frac{G_B(x, y)}{G_B(x_0, y)}$ stays bounded below by a
 positive constant as $y\to \partial B.$ Therefore $M(x, w)$ is
 positive for all $w.$ But this must mean that $\mu_{\epsilon} = 0.$
 Since $\epsilon$ was arbitrary, $\mu(\{z_1\}^c) = 0.$ Recalling that
 $\mu(\partial B) = 1,$ we arrive at $\mu = \delta_{z_1}$ as was to be
 shown.
\end{proof}

\noindent \textbf{Proof of Lemma \ref{vanishing}:}
Let $D_n$ be an increasing sequence of open sets such that $\bar
D_n\subset D_{n+1}$ and $B = \cup_{n}D_n.$ Set $\tau_{D_n} = \tau_n$
for simplicity of notation. Note that $\tau_n\uparrow\tau_B.$ Now for
any $x\in B,$ using harmonicity of $h$ and the bounded convergence
theorem we have
\begin{eqnarray}
h(x)& = &\lim_{n\to\infty}\mathbb{E}^x\left(h(X_{\tau_n}); \tau_n <
\tau_B\right)\nonumber\\ & = &
\mathbb{E}^x\left(\lim_{n\to\infty}h(X_{\tau_n}); \tau_n < \tau_B\right)
\nonumber\\ & = & 0
\end{eqnarray}
which proves Lemma $12.$
\qed

\vspace{0.1in}

\noindent Combining the remark following Lemma \ref{MartinBdry}, and Lemma \ref{identify}, we obtain the following result
{\begin{theorem}\label{marteqbdry} There is a one-to-one mapping of
    the Martin boundary $\partial B_M$ onto the Euclidean boundary
    $\partial B,$ i.e the Martin boundary is the same as the Euclidean
    boundary.
\end{theorem}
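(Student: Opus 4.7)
The plan is to exhibit a continuous bijection $\Phi: \partial B \to \partial B_M$ and appeal to the fact that a continuous bijection between compact Hausdorff spaces is a homeomorphism. By Proposition \ref{Hcont}, for each $z \in \partial B$ the limit $M(\cdot, z) := \lim_{y \to z,\, y \in B} M(\cdot, y)$ exists and defines a Martin kernel. Via the inclusion $B_M \subset \bar B$ noted in the remark preceding the theorem, each such kernel corresponds to a unique point of $B_M$; since $z \notin B$, that point must lie in $\partial B_M = B_M \setminus B$, and I set $\Phi(z)$ to be it.

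Injectivity of $\Phi$ is exactly the content of Lemma \ref{identify}: $\Phi(z_1) = \Phi(z_2)$ means $M(\cdot, z_1) \equiv M(\cdot, z_2)$, which forces $z_1 = z_2$. For surjectivity, take $\zeta \in \partial B_M$ and any sequence $y_n \in B$ with $y_n \to \zeta$ in the Martin topology. No subsequence of $(y_n)$ can accumulate at a point of $B$, for if $y_{n_k} \to z_0 \in B$ in Euclidean norm then continuity of $G_B(\cdot,\cdot)$ off the diagonal yields $M(\cdot, y_{n_k}) \to G_B(\cdot, z_0)/G_B(x_0, z_0)$, which would identify $\zeta$ with the interior point $z_0$ of $B_M$, contradicting $\zeta \in \partial B_M$. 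By compactness of $\bar B$, some subsequence therefore converges to a point $z \in \partial B$, and by Proposition \ref{Hcont} one obtains $M(\cdot, \zeta) = M(\cdot, z)$, so $\zeta = \Phi(z)$. Continuity of $\Phi$ follows from the uniform continuity statement in Proposition \ref{Hcont} (so $z_n \to z$ in $\partial B$ implies $M(x, z_n) \to M(x, z)$ for every $x$, which is convergence in the Martin topology), and since $\partial B$ and $\partial B_M$ are both compact Hausdorff, $\Phi$ is automatically a homeomorphism.

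The main obstacle is the surjectivity step, specifically ruling out that a Martin-convergent sequence to a point of $\partial B_M$ has a Euclidean subsequential limit inside $B$. This rests on the minimality property of the Martin compactification from \cite{kunwat}: because Proposition \ref{Hcont} makes $M(x, \cdot)$ extend continuously to all of $\bar B$, there is a canonical continuous surjection $\pi: \bar B \to B_M$ fixing $B$ pointwise, and $\pi$ restricts to a continuous surjection $\partial B \to \partial B_M$. Combined with the injectivity provided by Lemma \ref{identify}, this gives the desired homeomorphism $\Phi = \pi|_{\partial B}$.
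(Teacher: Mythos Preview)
Your argument is correct and follows the same route as the paper: the paper simply asserts that the theorem follows by combining the remark after Lemma~\ref{MartinBdry} (the continuous extension of $M(x,\cdot)$ to $\bar B$ together with minimality of the Martin compactification gives the surjection $\bar B\to B_M$) with Lemma~\ref{identify} (injectivity on $\partial B$). You spell out these steps in more detail, including the sequential surjectivity argument and the compact--Hausdorff homeomorphism step, but the ingredients and logic are identical.
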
}
We next need the Martin representation for positive $\mathcal
L$-harmonic functions, for which we need to quote an abstract result
from the general theory of Markov processes.
\begin{lemma}\label{martinrep}
Every positive $\mathcal L$-harmonic function $h$ in the ball $B$ can be represented as
$$ h(x) = \int_{\partial B}M(x, y)\nu(dy),$$
for some positive measure $\nu$ concentrated on the boundary $\partial B.$
\end{lemma}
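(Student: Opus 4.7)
The plan is to invoke the abstract Martin representation theorem from general Markov process theory and to combine it with the identification of boundaries already established. The Kunita-Watanabe construction (Lemma \ref{MartinBdry}) gives us the Martin compactification $B_M$, and Theorem \ref{marteqbdry} identifies $\partial B_M$ with the Euclidean boundary $\partial B$. The abstract theorem asserts that for a sufficiently regular transient Markov process, every positive harmonic function admits a unique integral representation against the Martin kernel, over the minimal part of the Martin boundary.

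First I would verify that the killed process $X^B$ satisfies the standing hypotheses required for the abstract theorem: the strong Markov property (which holds by construction, see \cite{CK2}), transience of $X^B$ (which follows from $\mathbb{E}^x(\tau_B) < \infty$ in \eqref{fupperbd}), lower semicontinuity and positivity of $G_B$ on $B \times B$ (guaranteed by the two-sided bounds in Lemma \ref{G_Bbd}), and the continuity of $M(x,\cdot)$ in the extended sense along $\partial B$ (Proposition \ref{Hcont}). With these in place, the abstract theorem produces a representation
\begin{equation*}
h(x) = \int_{\partial_m B_M} M(x,y)\,\nu(dy),
\end{equation*}
where $\partial_m B_M$ denotes the minimal Martin boundary and $\nu$ is a unique finite positive measure on it.

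Next I would upgrade the domain of integration from $\partial_m B_M$ to the full Euclidean boundary $\partial B$ by showing that every $z \in \partial B$ is a minimal Martin boundary point. Suppose $M(\cdot, z)= \int_{\partial B} M(\cdot, w)\,\mu(dw)$ for a probability measure $\mu$ on $\partial B$. Arguing exactly as in the proof of Lemma \ref{identify}: for $w \ne z$ we showed $M(x,w)\to 0$ as $x$ approaches any boundary point $\ne w$; moreover the Boundary Harnack Principle (Theorem \ref{BdryHarnack}) combined with Proposition \ref{deltabd} keeps the ratio $G_B(x,y)/G_B(x_0,y)$ bounded below by a positive constant as $y \to \partial B$, so $M(x,w) > 0$ for each $w$. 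Repeating the splitting $\mu = \mu|_{B(z,\epsilon)} + \mu|_{B(z,\epsilon)^c}$ used in Lemma \ref{identify} and invoking the vanishing lemma (Lemma \ref{vanishing}) forces $\mu|_{B(z,\epsilon)^c}=0$, hence $\mu = \delta_z$. Thus $z$ is a minimal point, so $\partial_m B_M = \partial B_M = \partial B$ by Theorem \ref{marteqbdry}, and the representation has $\nu$ supported on $\partial B$ as claimed.

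The main obstacle I anticipate is not minimality itself, which reduces cleanly to the previously established Boundary Harnack machinery, but rather the careful citation and verification of the abstract Martin representation theorem in the nonlocal setting. The classical Kunita-Watanabe theorem was formulated for nice diffusions, and one must confirm that the assumptions extend to our jump-diffusion $X^B$; this is essentially bookkeeping given that $G_B$ satisfies the two-sided bounds of Lemma \ref{G_Bbd} and the process has right-continuous paths with left limits, but it is the only step that is not purely a repackaging of results already proved in this section.
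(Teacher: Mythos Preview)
Your proposal is correct and follows the same approach as the paper, which simply cites condition (C) and Theorem~1 of Kunita--Watanabe \cite{kunwat} together with Theorem~\ref{marteqbdry}. Your minimality argument is correct but more than the lemma requires: since the abstract representation already yields $\nu$ supported on $\partial_m B_M \subset \partial B_M = \partial B$, the statement follows immediately without establishing $\partial_m B_M = \partial B$.
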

\begin{proof}
We first check that our process $X_t$ satisfies condition $(C)$ in
\cite{kunwat}. Then we apply Theorem $1$ in \cite{kunwat} in
combination with Theorem \ref{marteqbdry} above to obtain our required
result.
\end{proof}
The next three lemmas are proved in \cite{cskv}. For the reader's
convenience we present the proof here as well.
\begin{lemma}\label{lemmaCS1}
For every $z\in\partial B$ and every open set $U\subset\bar{U}\subset
B,$ $M(X_{\tau_U}, z)$ is $\mathbb{P}^x$ integrable.
\end{lemma}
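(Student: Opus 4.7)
The plan is to approximate $M(\cdot,z)$ by the ratios $M(\cdot,z_n)=G_B(\cdot,z_n)/G_B(x_0,z_n)$ with $z_n\in B\setminus\bar U$, $z_n\to z$, and then pass to the limit in the identity $M(x,z_n)=\mathbb E^x[M(X_{\tau_U},z_n)]$ via Fatou's lemma. Since $\bar U\subset B$ is compact and $z\in\partial B$, $\mathrm{dist}(z,\bar U)>0$, so such a sequence exists. For each $n$, the function $G_B(\cdot,z_n)$ is $\mathcal L$-harmonic on $B\setminus\{z_n\}$ and vanishes on $B^c$, so it is in particular harmonic on an open neighborhood of $\bar U$, and $M(\cdot,z_n)$ is bounded on $\bar U$.

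By the strong Markov property at $\tau_U$, together with the fact that $X_t$ spends Lebesgue-zero time at the single point $z_n$ before $\tau_U$ (a consequence of $z_n\notin\bar U$), I would obtain $G_B(x,z_n)=\mathbb E^x[G_B(X_{\tau_U},z_n)]$ for $x\in U$; dividing by $G_B(x_0,z_n)>0$ gives
$$M(x,z_n)=\mathbb E^x\!\left[M(X_{\tau_U},z_n)\right],\quad x\in U,$$
with both sides finite. By the definition of $M(\cdot,z)$ at $z\in\partial B$ (Proposition \ref{Hcont} and the discussion preceding Lemma \ref{MartinBdry}), $M(x,z_n)\to M(x,z)<\infty$. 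Moreover, under $\mathbb P^x$, $X_{\tau_U}\notin\{z\}\cup\bigcup_n\{z_n\}$ almost surely: the law of $X_{\tau_U}$ assigns no mass to any single point, since the jump part has a density against Lebesgue measure (via the kernel $J$) and the continuous exit part is supported on $\partial U\subset B$, which misses $\{z\}$. Hence $M(X_{\tau_U},z_n)\to M(X_{\tau_U},z)$ $\mathbb P^x$-a.s.\ (with the case $X_{\tau_U}\in B^c$ trivial, both sides being $0$ by convention). Non-negativity of $M$ and Fatou's lemma then give
$$\mathbb E^x\!\left[M(X_{\tau_U},z)\right]\le\liminf_{n\to\infty}\mathbb E^x\!\left[M(X_{\tau_U},z_n)\right]=\liminf_{n\to\infty}M(x,z_n)=M(x,z)<\infty.$$

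The main obstacle is to justify the exit identity $G_B(x,z_n)=\mathbb E^x[G_B(X_{\tau_U},z_n)]$ cleanly, since $G_B(\cdot,z_n)$ blows up at $z_n$. The key observation is that, because $z_n$ is kept at a fixed positive distance from $\bar U$, the singularity is never touched by the process while it is in $U$, and jumps exiting $U$ can only reach the singularity via the complement of $\bar U$. The identity itself is a direct consequence of writing $G_B(x,z_n)$ as an occupation-time potential of the killed process and applying the strong Markov property at $\tau_U$; its right-hand side is finite because the left-hand side is (for $x\ne z_n$), so no separate integrability estimate is needed for the approximants. Once this identity is in hand, the remainder is a routine Fatou passage using the pointwise definition of the Martin kernel at $z\in\partial B$.
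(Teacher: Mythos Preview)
Your proof is correct and follows essentially the same approach as the paper: approximate $z$ by $z_n\in B\setminus\bar U$, use regular harmonicity of $M(\cdot,z_n)$ in $U$ to write $M(x,z_n)=\mathbb{E}^x[M(X_{\tau_U},z_n)]$, and pass to the limit via Fatou. The paper's proof is terser, simply asserting regular harmonicity and applying Fatou in three lines, whereas you spell out the justification for the exit identity and for the a.s.\ convergence $M(X_{\tau_U},z_n)\to M(X_{\tau_U},z)$; these are reasonable elaborations but not departures from the paper's argument.
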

\begin{proof} 
Take a sequence $z_n$ in $B\setminus{\bar U}$ such that $z_n\to z.$
Then using the fact that $M(\cdot , z_n)$ is regular harmonic in $U,$ and
Fatou's lemma, we have
\begin{eqnarray}
M(x, z)&=& \lim_{n}M(x, z_n) \nonumber\\
       &=& \lim_{n}\mathbb{E}^x\left(M(X_{\tau_U}, z_n)\right) \nonumber\\
       &\ge & \mathbb{E}^x\left(M(X_{\tau_U}, z)\right).
\end{eqnarray}
 Since $M(x, z)$ is finite, this concludes the proof of the lemma.
\end{proof}
\begin{lemma}\label{chensongG} 
For every $z\in\partial B,$ and $x\in B,$
$$ M(x, z) = \mathbb{E}^x\left(M(X_{\tau_{B(x, r)}}, z)\right),$$
for every $0 < r < \frac{1}{2}(R_1\wedge\delta_B(x)).$
\end{lemma}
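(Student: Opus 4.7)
The plan is to approach the boundary point $z$ by a sequence of interior poles $y_n \to z$, where the Martin kernel reduces to a ratio of Green functions for which the desired mean-value identity is essentially built in, and then to pass to the limit.

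First, fix a sequence $(y_n) \subset B \setminus \{x_0\}$ with $y_n \to z \in \partial B$. Since $r < \tfrac{1}{2}\delta_B(x)$, the closure $\overline{B(x,r)}$ lies at positive distance from $\partial B$, so for all $n$ sufficiently large we have $y_n \notin \overline{B(x,r)}$. For such $n$, the function $w \mapsto G_B(w,y_n)$ is $\mathcal{L}$-regular harmonic on $B(x,r)$ (the Green function with its pole deleted is $\mathcal{L}$-harmonic). Therefore
\begin{equation*}
G_B(x,y_n) \;=\; \mathbb{E}^x\bigl[G_B(X_{\tau_{B(x,r)}},\,y_n)\bigr],
\end{equation*}
and dividing by $G_B(x_0,y_n)$ (which is strictly positive for $n$ large) yields
\begin{equation*}
M(x,y_n) \;=\; \mathbb{E}^x\bigl[M(X_{\tau_{B(x,r)}},\,y_n)\bigr].
\end{equation*}

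Next, I would send $n \to \infty$. By the definition of $M(x,z)$ as the limit of $M(x,y_n)$ (Proposition \ref{Hcont} and the discussion following it), the left-hand side converges to $M(x,z)$. For the right-hand side, the integrand converges pointwise: if $X_{\tau_{B(x,r)}} \in B^c$ then both $M(X_{\tau_{B(x,r)}},y_n)$ and $M(X_{\tau_{B(x,r)}},z)$ equal $0$ by convention, and if $X_{\tau_{B(x,r)}} \in B\setminus B(x,r)$ then the convergence $M(w,y_n)\to M(w,z)$ follows from Proposition \ref{Hcont}. Applying Fatou's lemma immediately delivers the easy inequality
\begin{equation*}
M(x,z) \;\geq\; \mathbb{E}^x\bigl[M(X_{\tau_{B(x,r)}},\,z)\bigr].
\end{equation*}

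To obtain the reverse inequality, and thus equality, I would set up a dominated-convergence argument. The natural dominant is a constant multiple of $M(\cdot,z)$ itself, which is already known to be $\mathbb{P}^x$-integrable against the exit measure by Lemma \ref{lemmaCS1}. The comparison $M(w,y_n) \leq c\,M(w,z)$ (uniformly in $n$, for $w$ ranging over the support of $X_{\tau_{B(x,r)}}$) can be established by splitting according to the location of $w$: for $w$ in a fixed neighborhood of $\partial B$ the Boundary Harnack Principle (Theorem \ref{BdryHarnack}) applied to the positive $\mathcal{L}$-harmonic functions $G_B(\cdot,y_n)$ and $G_B(\cdot,x_0)$ (both vanishing on $B^c$) gives the required uniform ratio bound once $y_n$ is close enough to $z$; for $w$ in a compact subset of $B$ away from $\partial B$, the ordinary Harnack inequality (Proposition \ref{hq0}) combined with Proposition \ref{Hcont} yields uniform convergence and thus a fortiori the domination.

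The main obstacle is precisely this uniform domination step: controlling $M(w,y_n)$ simultaneously for $w$ close to $z$ (where $M(w,z)$ can be large) and for $w$ far from $z$ (where one must avoid any blow-up as $y_n$ approaches $z$). Once the comparison $M(w,y_n)\leq c\,M(w,z)$ is in hand, dominated convergence converts the Fatou inequality above into the desired equality
\begin{equation*}
M(x,z) \;=\; \mathbb{E}^x\bigl[M(X_{\tau_{B(x,r)}},\,z)\bigr],
\end{equation*}
completing the proof.
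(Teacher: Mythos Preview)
Your overall strategy matches the paper's: approximate $z$ by interior poles $y_n$, use the mean-value property for $M(\cdot,y_n)$ on $B(x,r)$, and pass to the limit. The Fatou half is fine and reproduces Lemma~\ref{lemmaCS1}. The gap is in the dominated-convergence step, and it is not a technicality but the heart of the argument.

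Your proposed dominant $c\,M(\cdot,z)$ cannot work globally on the support of $X_{\tau_{B(x,r)}}$. The process has jumps, so $X_{\tau_{B(x,r)}}$ can land anywhere in $B\setminus B(x,r)$, including arbitrarily close to $y_n$. Near $y_n$ the function $w\mapsto M(w,y_n)=G_B(w,y_n)/G_B(x_0,y_n)$ has a genuine pole (of order $|w-y_n|^{2-d}$), whereas $M(w,z)$ is finite for every $w\in B$. So no inequality $M(w,y_n)\le c\,M(w,z)$ can hold uniformly in $w$ over all of $B\setminus B(x,r)$. Your appeal to the Boundary Harnack Principle for $w$ near $\partial B$ fails for exactly this reason: to compare $G_B(\cdot,y_n)$ and $G_B(\cdot,x_0)$ via Theorem~\ref{BdryHarnack} in $B\cap B(z,\rho)$, both must be harmonic there, which forces $y_n\notin B(z,\rho)$. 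Since $y_n\to z$, any fixed $\rho$ eventually contains $y_n$, and BHP gives you nothing in that ball.

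The paper resolves this by splitting the expectation according to whether $X_{\tau_{B(x,r)}}$ lands outside or inside a shrinking ball $B(z,\eta_m)$ with $\eta_m\downarrow 0$. On $B\setminus B(z,\eta_m)$ the BHP does yield $M(w,z_m)\le c\,M(w,z)$ (your argument is correct there), and Lemma~\ref{lemmaCS1} controls that piece. The remaining piece, $\mathbb{E}^x\bigl[M(X_{\tau_r},z_m);\,X_{\tau_r}\in B\cap B(z,\eta_m)\bigr]$, is handled not by domination but by a direct estimate: the L\'evy system formula converts it into an integral of $M(w,z_m)$ against $G_{B(x,r)}(x,\cdot)J(\cdot,w)$, and using $G_B(w,z_m)\le c|w-z_m|^{2-d}$ together with the lower bound $G_B(x_0,z_m)\ge c\,\delta_B(z_m)\ge c\,\eta_m$ from Proposition~\ref{deltabd}, one shows this term is $O(\eta_m)\to 0$. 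This is the missing ingredient, and it is what upgrades your Fatou inequality to equality (via uniform integrability rather than dominated convergence).
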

\begin{proof}
Fix $z\in\partial B,$ $x\in B,$ and $r <
\frac{1}{2}(R_1\wedge\delta_B(x)).$ From the hypothesis on $r$ we have
that $B(z, r)\cap D\subset D\setminus B(x, r).$ Set $\eta_m =
(\frac{1}{2})^mr$ and let $z_m\in B $ be the point on the radial line
joining $z$ to the center $x_0$ of $B,$ such that $d(z_m, z) = \frac{3}{4}\eta_m.$
Then $z_m\in B(z, \eta_m)$. Let $\tau_r = \tau_{B(x, r)}$ be exit time
from $B(x, r).$ By the harmonicity of $M(\cdot , z_m),$ we have that
\begin{equation}\label{M}
M(x, z_m) = \mathbb{E}^x\left(M(X_{\tau_r}, z_m)\right)
\end{equation}
If we let $m\to\infty$ in \eqref{M}, we obtain $M(x, z)$ on the left
hand side. We will show that the random variables $M(X_{\tau_r}, z_m)$
are uniformly integrable, so that we can take the limit inside the
expectation in the right side of \eqref{M}.

As a preliminary, observe that from Theorem \ref{BdryHarnack} we know
there exists $c_1>0$ and $m_0\in\mathbb{N}$ such that for $m\geq m_0$
\begin{equation}
M(w, z_m) = \frac{G_B(w, z_m)}{G_B(x_0, z_m)}\leq c_1\frac{G_B(w, y)}{G_B(x_0, y)} = c_1M(w, y),
\end{equation}
for $w\in B\setminus B(z, \eta_m),$  $y\in B(z, \eta_{m+1}).$ Letting $y\to z\in\partial B,$ we obtain
\begin{equation}\label{M1}
M(w, z_m)\leq c_1M(w, z), \hspace{0.1in}m\geq m_0,
\end{equation}
for $w\in B\setminus B(z, \eta_m).$ Next, from Lemma \ref{lemmaCS1},
we know that $M(X_{\tau_r}, z)$ is $\mathbb{P}^x$
integrable. Therefore given $\epsilon > 0,$ there exists $N_0 > 1$
such that for $n\geq N_0,$
\begin{equation}\label{M2}
\mathbb{E}^x\left(M(X_{\tau_r}, z); M(X_{\tau_r}, z)> \frac{N_0}{c_1}\right)\leq\frac{\epsilon}{4c_1}.
\end{equation}
By (\ref{M1}) and (\ref{M2}),
\begin{eqnarray}
\lefteqn{\mathbb{E}^x\left(M(X_{\tau_r}, z_m); M(X_{\tau_r}, z_m) >
  N_0, \mbox{ and } X_{\tau_r}\in
  B\setminus B(z, \eta_m)\right)} \nonumber \\& \le &
c\mathbb{E}^x\left(M(X_{\tau_r}, z_m); c_1M(X_{\tau_r}, z)>
  N_0\right)\nonumber\\ & \le & c_1\frac{\epsilon}{4c_1} =
\frac{\epsilon}{4}
\end{eqnarray}
To deal with the other term, we use the Levy system formula \eqref{levy} and write 
\begin{eqnarray}\label{M3}
\lefteqn{\mathbb{E}^x\left(M(X_{\tau_r}, z_m); X_{\tau_r}\in B\cap B(z, \eta_m)\right)} \nonumber\\
&=& \int_{B\cap B(z, \eta_m)}M(w, z_m)\int_{B(x,r)}G_{B(x,r)}(x, y)J(y, w)dy dw \nonumber\\
& \leq & c_2\int_{B\cap B(z, \eta_m)}M(w, z_m)\int_{B(x,r)}\mid x-y\mid^{2-d}J(y,w)dy dw. \nonumber\\
\end{eqnarray}
Recall that $J(y, w)\leq c_3\mid y - w\mid^{-\alpha - d}.$ We have $\mid x-w\mid \geq \mid x-z\mid -\mid z-w\mid \geq 2r - \frac{r}{4} = \frac{7r}{4}.$ Ultimately we can estimate $\mid y-w\mid \geq \mid x-w\mid -\mid x-y\mid \geq \frac{7r}{4} - r = c_4r.$ Plugging this last information into \eqref{M3}, and using spherical coordinates to compute $\int_{B(x,r)}\mid x-y\mid^{2-d} = r^2,$ we obtain
\begin{equation}\label{M4}
\mathbb{E}^x\left(M(X_{\tau_r}, z_m); X_{\tau_r}\in B\cap B(z,
  \eta_m)\right)\leq c_5r^{2-\alpha -d}\int_{B(z, \eta_m)}M(w, z_m)dw
\end{equation}
Now using the definition of $M$ we rewrite the integral as 
\begin{equation}\label{Chenlast}
\int_{B(z, \eta_m)}M(w, z_m)dw = \frac{\int_{B(z, \eta_m)}G_B(w, z_m)dw}{G_B(x_0, z_m)}.
\end{equation}
Note that $B(z, \eta_m)\subset B(z_m, 2\eta_m).$ Therefore 
$\int_{B(z, \eta_m)}G_B(w, z_m)dw\leq\int_{B(z_m, 2\eta_m)} G_B(w, z_m)dw.$ Using the upper bound $G_B(w, z_m)\leq c_6\mid w-z_m\mid^{2-d}$ and converting the integral into polar coordinates, we obtain 
$$\int_{B(z, \eta_m)}G_B(w, z_m)dw\leq\int_{B(z_m, 2\eta_m)}G_B(w, z_m)dw\leq c_6\eta_m^2.$$ 
 On the other hand, from Proposition
\ref{deltabd}, we know that 
$$G_B(x_0, z_m)\geq c_7\delta_B(z_m)\geq c_8\eta_m.$$ Plugging the last two displays into equation \eqref{M4}, we infer that $$\mathbb{E}^x\left(M(X_{\tau_r}, z_m); X_{\tau_r}\in B\cap
  B(z, \eta_m)\right)\leq c_9\eta_m\to 0$$
as $m \rightarrow \infty$. Together with \eqref{M3}, this shows that the family
$M(X_{\tau_r}, z_m)$ is uniformly integrable and concludes the proof
of the lemma.
\end{proof}
\begin{lemma}\label{chenkim}
Let $z\in\partial B.$ Then, the function $x\rightarrow M(x, z)$ is harmonic in $B$ with respect to the process $X.$
\end{lemma}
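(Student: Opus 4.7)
The plan is to adapt the argument of Lemma \ref{chensongG} by replacing the small ball $B(x,r)$ with the arbitrary open set $U$. Fix an open $U$ with $\bar U \subset B$, a point $x \in U$, and set $d_0 := \mathrm{dist}(\bar U, z) > 0$. Pick a sequence $(y_n)_{n \geq 1}$ in $B \setminus \bar U$ converging to $z$, chosen along the radial segment from $z$ towards $x_0$ with $|y_n - z| = \eta_n := \tfrac{3}{4}\, 2^{-n} r_\star$, where $r_\star < \min(R_1, d_0)/4$ is fixed; this mirrors the choice of $z_m$ in the proof of Lemma \ref{chensongG}. Since each $y_n$ lies outside $U$, standard potential theory (cf.\ Lemma 4.2 of \cite{cskv}) gives that $w \mapsto G_B(w, y_n)$ is regular harmonic in $U$, so
\begin{equation*}
M(x, y_n) \,=\, \mathbb{E}^x\bigl[M(X_{\tau_U}, y_n)\bigr], \qquad n \geq 1.
\end{equation*}

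As $n \to \infty$ the left-hand side tends to $M(x, z)$ by the definition of the Martin Kernel, and pointwise $M(X_{\tau_U}, y_n) \to M(X_{\tau_U}, z)$ holds $\mathbb{P}^x$-almost surely---trivially on $\{X_{\tau_U} \in B^c\}$ where both sides vanish, and by the continuous extension of $M(w, \cdot)$ for $w \in B$ otherwise. The crux is to pass the limit through the expectation via uniform integrability. To this end, split
\begin{equation*}
\mathbb{E}^x[M(X_{\tau_U}, y_n)] \,=\, \mathbb{E}^x[M(X_{\tau_U}, y_n); E_n] + \mathbb{E}^x[M(X_{\tau_U}, y_n); F_n],
\end{equation*}
where $E_n = \{X_{\tau_U} \in B \setminus B(z, \eta_n)\}$ and $F_n = \{X_{\tau_U} \in B \cap B(z, \eta_n)\}$.

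The far piece $E_n$ is handled by the Boundary Harnack Principle: applying Theorem \ref{BdryHarnack} in the second argument of $G_B$ exactly as in the proof of Lemma \ref{chensongG} yields $M(w, y_n) \leq c_1 M(w, z)$ for $w \in B \setminus B(z, \eta_n)$ and all $n$ sufficiently large. Since $M(X_{\tau_U}, z) \in L^1(\mathbb{P}^x)$ by Lemma \ref{lemmaCS1}, a routine dominated-convergence argument gives $\mathbb{E}^x[M(X_{\tau_U}, y_n); E_n] \to \mathbb{E}^x[M(X_{\tau_U}, z)]$. For $F_n$, observe that $B(z, \eta_n) \cap \bar U = \emptyset$ once $\eta_n < d_0$, so the exit from $U$ into $B(z, \eta_n)$ must occur via a jump. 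The Levy system formula \eqref{levy} then gives
\begin{equation*}
\mathbb{E}^x[M(X_{\tau_U}, y_n); F_n] \,=\, \int_{B \cap B(z, \eta_n)} M(w, y_n) \int_U G_U(x, y)\, J(y, w)\, dy\, dw.
\end{equation*}
For $y \in U$ and $w \in B(z, \eta_n)$ with $n$ large, $|y - w| \geq d_0/2$, so $J(y, w)$ is uniformly bounded; combined with $\int_U G_U(x, y)\, dy = \mathbb{E}^x[\tau_U] < \infty$, this reduces matters to controlling $\int_{B(z, \eta_n)} M(w, y_n)\, dw$. Estimating $\int_{B(z, \eta_n)} G_B(w, y_n)\, dw \leq c\, \eta_n^2$ via \eqref{ub1}, and $G_B(x_0, y_n) \geq c\, \eta_n$ via Proposition \ref{deltabd}---precisely as in the final computation of Lemma \ref{chensongG}---yields $\int_{B(z, \eta_n)} M(w, y_n)\, dw = O(\eta_n)$, so $\mathbb{E}^x[M(X_{\tau_U}, y_n); F_n] \to 0$. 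Combining the two pieces gives $\mathbb{E}^x[M(X_{\tau_U}, y_n)] \to \mathbb{E}^x[M(X_{\tau_U}, z)]$, and hence the harmonicity of $M(\cdot, z)$. The main obstacle is the near-$z$ piece $F_n$: this is precisely where the non-local jump structure must be tamed, and the argument hinges on the interplay between the Levy system and the sharp Green-function bounds from Proposition \ref{deltabd}.
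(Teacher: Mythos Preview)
Your proof is correct but follows a genuinely different route from the paper's. The paper does not redo the uniform-integrability computation for a general $U$; instead it uses Lemma \ref{chensongG} (the small-ball mean-value property) as a black box and bootstraps. Concretely, for $x\in U$ it sets $r(x)=\tfrac12(R_1\wedge\delta_B(x))$, defines iterated stopping times $T_1=\tau_{B(X_0,r(X_0))}$ and $T_{n+1}=T_n+\tau_{B(X_{T_n},r(X_{T_n}))}\circ\theta_{T_n}$ while $X_{T_n}\in U$, and uses Lemma \ref{chensongG} together with the strong Markov property to obtain $M(x,z)=\mathbb{E}^x[M(X_{T_n},z)]$ for every $n$. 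Since $T_n\uparrow\tau_U$ and $M(\cdot,z)$ is continuous and bounded on $\bar U$, dominated convergence on $\bigcap_n\{T_n<\tau_U\}$ (where $X_{\tau_U}\in\partial U$) together with the eventual stabilisation on the complement yields $M(x,z)=\mathbb{E}^x[M(X_{\tau_U},z)]$.

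Your approach instead reruns the approximation $y_n\to z$ and the Levy-system estimate directly for the arbitrary $U$. This works because the two features of $B(x,r)$ actually used in Lemma \ref{chensongG}---disjointness of the near-$z$ cap from the exit domain, and finiteness of the expected exit time---are available for any $U$ with $\bar U\subset B$ once $\eta_n<\mathrm{dist}(\bar U,z)$, with $\mathbb{E}^x[\tau_U]\le\mathbb{E}^x[\tau_B]<\infty$ replacing the explicit $r^2$ bound. Your argument is more economical in that it subsumes Lemma \ref{chensongG} and avoids the stopping-time iteration entirely; the paper's modular route keeps the uniform-integrability step confined to the cleanest geometric setting and then transfers it by a soft argument that needs only continuity of $M(\cdot,z)$ on compacts.
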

\begin{proof}
Fix $z\in\partial B.$ Denote $h(x) = M(x, z).$ Let $U$ be an open set with $U\subset\bar U\subset B.$ For $x\in U,$ let us define 
$$ r(x) =
\frac{1}{2}(R_1\wedge\delta_B(x)),\hspace{0.1in}\mbox{and}\hspace{0.1in}
B(x) = B(x, r(x)). $$ Now define a sequence of stopping times
$\{T_n\}$ as follows.
$$T_1 = \inf\{t > 0: X_t\notin B(X_0)\},$$
and for $n\geq 2,$
\begin{equation}
T_n=\left\{\begin{array}{ll} T_{n-1} +
\tau_{B(X_{T_{n-1}})}\circ\theta_{T_{n-1}},& \mbox{if}\hspace{0.05in}
X_{T_{n-1}}\in U,\\ \tau_U& \mbox{otherwise} \end{array}\right.
\end{equation}
Observe that $X_{\tau_U}\in\partial U$ on the set $\bigcap_{n}\{T_n < \tau_U\}.$ Therefore, since $T_n\to\tau_U$ as $n\to\infty,$ $\mathbb{P}^x$ almost surely, and $h$ is continuous in $B,$ we have
$$ \lim_n h(X_{T_n}) = h(X_{\tau_U})\hspace{0.05in}\mbox{on}\hspace{0.05in} \bigcap_{n}\{T_n < \tau_U\}.$$
Next using that $h$ is bounded in $\bar U,$ and the dominated convergence theorem, we obtain
$$\lim_{n}\mathbb{E}^x\left(h(X_{T_n});\hspace{0.05in} \bigcap_{n}\{T_n <
  \tau_U\}\right) = \mathbb{E}^x\left(h(X_{\tau_U});\hspace{0.05in}
  \bigcap_{n}\{T_n < \tau_U\}\right).$$
Therefore, using Lemma \ref{chensongG},
\begin{eqnarray}
h(x)&=& \lim_{n}\mathbb{E}^x\left(h(X_{T_n})\right)\nonumber\\ &=&
\lim_{n}\mathbb{E}^x\left(h(X_{T_n}); \hspace{0.05in} \bigcup_{n}\{T_n =
  \tau_U\}\right) + \lim_{n}\mathbb{E}^x\left(h(X_{T_n});\hspace{0.05in}
  \bigcap_{n}\{T_n < \tau_U\}\right)\nonumber\\ &=&
\mathbb{E}^x\left(h(X_{\tau_U});\hspace{0.05in} \bigcup_{n}\{T_n =
  \tau_U\}\right) + \mathbb{E}^x\left(h(X_{\tau_U});\hspace{0.05in}
  \bigcap_{n}\{T_n < \tau_U\}\right) \nonumber\\ &=&
\mathbb{E}^x\left(h(X_{\tau_U})\right).
\end{eqnarray}
This shows that $h$ is harmonic and finishes the proof of the Lemma.
\end{proof}
 We will need another simple lemma that follows from
Lemma \ref{chenkim}.
\begin{lemma}\label{integralHarmonic}
Let $\mu$ be a finite measure supported in $\partial B.$ Then the function $h(x) = \int_{\partial B} M(x, w)\mu(dw)$ is harmonic in $B.$
\end{lemma}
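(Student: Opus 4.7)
The plan is to reduce Lemma \ref{integralHarmonic} to Lemma \ref{chenkim} by an application of Fubini/Tonelli, with a preparatory step showing that $h$ is finite and that the interchange of integrals is legitimate.

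First I would establish that $h$ is finite on $B$. Since $M(x_0, w) = 1$ for every $w\in \partial B$, and since $M(\cdot, w)$ is a positive $\mathcal L$-harmonic function on $B$ (by Lemma \ref{chenkim}), the Harnack inequality in Proposition \ref{hq0} together with a standard chain-of-balls argument yields, for each $x \in B$, a constant $C(x) > 0$ (independent of $w$) such that $M(x, w) \le C(x) \, M(x_0, w) = C(x)$ for all $w \in \partial B$. Since $\mu$ is a finite measure on $\partial B$, this gives $h(x) \le C(x) \mu(\partial B) < \infty$.

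Next, fix an open set $U$ with $\bar U \subset B$; I need to verify the harmonicity identity $h(x) = \mathbb{E}^x(h(X_{\tau_U}))$ for $x \in U$. Compute
\begin{equation*}
\mathbb{E}^x\bigl(h(X_{\tau_U})\bigr) = \mathbb{E}^x\left(\int_{\partial B} M(X_{\tau_U}, w)\, \mu(dw)\right) = \int_{\partial B} \mathbb{E}^x\bigl(M(X_{\tau_U}, w)\bigr)\, \mu(dw),
\end{equation*}
where the interchange is justified by Tonelli's theorem, since $M(\cdot,\cdot) \ge 0$ and the joint measurability of $(\omega, w) \mapsto M(X_{\tau_U}(\omega), w)$ follows from the continuity of $M(x, w)$ in $w$ (Proposition \ref{Hcont} combined with Theorem \ref{marteqbdry}) and the measurability of $X_{\tau_U}$. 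By Lemma \ref{chenkim}, each $M(\cdot, w)$ is $\mathcal{L}$-harmonic in $B$, so $\mathbb{E}^x(M(X_{\tau_U}, w)) = M(x, w)$ for every $w \in \partial B$. Substituting this in gives
\begin{equation*}
\mathbb{E}^x\bigl(h(X_{\tau_U})\bigr) = \int_{\partial B} M(x, w)\, \mu(dw) = h(x),
\end{equation*}
which is exactly the harmonicity condition \eqref{harmonicdefn}.

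No real obstacle arises here: the argument is essentially mechanical once Lemma \ref{chenkim} is in hand. The only point requiring care is the joint measurability needed to invoke Tonelli, which however follows directly from the extended continuity of the Martin kernel established earlier. Finiteness of $h$, in principle the other potential issue, is handled uniformly in $w$ by the Harnack chain estimate using the normalization $M(x_0, w) = 1$.
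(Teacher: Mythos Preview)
Your proof is correct and follows essentially the same route as the paper's: interchange $\mathbb{E}^x$ and $\int_{\partial B}$ via Tonelli, then invoke Lemma \ref{chenkim} for each fixed $w$. The paper additionally makes explicit the splitting into $\{X_{\tau_U}\in B\}$ and $\{X_{\tau_U}\notin B\}$ using the convention $M(\cdot,z)=0$ on $B^c$, but this is implicit in your computation as well; your added care about finiteness of $h$ and joint measurability is fine and not in conflict with the paper's argument.
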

\begin{proof}
Let $U\subset\bar{U}\subset B.$ We have to show that $h(x)
=\mathbb{E}^x(h(X_{\tau_U}))$ for $x\in U.$ To start with, we recall
that $M(\cdot , z)$ is defined to be $0$ in $B^c.$ Then for $x\in U,$ we
can write
\begin{eqnarray}
\mathbb{E}^x\left(h(X_{\tau_U})\right)&=& \mathbb{E}^x\left(h(X_{\tau_U});
  X_{\tau_U}\in B\right) + \mathbb{E}^x\left(h(X_{\tau_U}); X_{\tau_U}\notin B\right)
\nonumber\\ &=& \mathbb{E}^x\left(h(X_{\tau_U}); X_{\tau_U}\in B\right) + 0
\nonumber\\ &=& \mathbb{E}^x\left(\int_{\partial B} M(X_{\tau_U},
w)\mu(dw) \right) \nonumber\\ &=& \int_{\partial
  B}\mathbb{E}^x(M(X_{\tau_U}, w))\mu(dw)\nonumber\\ &=&
\int_{\partial B}M(x, w)\mu(dw)\nonumber\\ &=& h(x),
\end{eqnarray}
where we used lemma \ref{chenkim} to go from the fourth line to the fifth. This shows that $h$ is harmonic. 
\end{proof}
Next, we will introduce the notion of harmonic measure and derive the
density. Let $\omega^x(A) = \omega(x, A)= \mathbb{P}^x(X_{\tau_B}\in
A),$ where $A\subset\partial B$ is a Borel set. For fixed
$A\subset\partial B,$ $\omega (. , A)$ is a positive harmonic function
in $B$ vanishing on the complement of $\bar{B}.$ On the other hand,
for fixed $x,$ $\omega (x , \cdot)$ is a measure supported on $B^c.$ In general the harmonic measure will be supported on  $B^c,$ but we will restrict our attention to the measure on Borel subsets of the
boundary $\partial B.$

If $x, x_0 \in B,$ the Harnack inequality for $\mathcal L$
harmonic functions says $\omega^x$ is absolutely continuous with
respect to $\omega^{x_0}.$ Hence we know that a density exists,
$\omega^x(dy) = f(x, y)\omega^{x_0}(dy).$ The following theorem
identifies this density. We recall that $M$ denotes the Martin kernel
defined by
$$ M(x, y) = \dfrac{G_B(x, y)}{G_B(x_0, y)},$$
where $x_0$ is a fixed reference point and $x , y \in B.$ By the remark following Proposition \ref{Hcont}, we know that $\lim_{y\to z} M(x, y)$ exists for $z\in\partial B.$ We denote this limit by $M(x, z).$ Now we state our theorem.
\begin{theorem}\label{density}
Let $A\subset\partial B$ be a Borel set. Then, 
$$ \omega(x, A) = \int_{A} M(x, z)\omega(x_0, dz).$$
\end{theorem}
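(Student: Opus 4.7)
The plan is to apply the Martin representation of Lemma \ref{martinrep} to the harmonic-measure function $\omega_A(x) := \omega(x, A)$ and then identify its representing measure as $\mathbf{1}_A(z)\,\omega(x_0, dz)$.

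By the strong Markov property, $\omega_A$ is a bounded nonnegative $\mathcal L$-harmonic function on $B$. Lemma \ref{martinrep} therefore produces a positive measure $\mu_A$ on $\partial B$ with $\omega_A(x) = \int_{\partial B} M(x, z)\,\mu_A(dz)$. Since $M(x_0, y) \equiv 1$ on $B$ by construction, continuity at the boundary forces $M(x_0, z) = 1$ for $z \in \partial B$; evaluating at $x_0$ thus yields $\mu_A(\partial B) = \omega(x_0, A)$. On the other side, Lemma \ref{integralHarmonic} shows that $v(x) := \int_A M(x, z)\,\omega(x_0, dz)$ is also $\mathcal L$-harmonic on $B$, and its Martin representing measure is by inspection $\mathbf{1}_A\,\omega(x_0, \cdot)$. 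Since the Martin boundary has been identified with $\partial B$ (Theorem \ref{marteqbdry}), the Martin representation of a positive harmonic function on $B$ is unique, and so the theorem reduces to proving the identification $\mu_A = \mathbf{1}_A\,\omega(x_0, \cdot)$.

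The crux is to show that $\mu_A$ is concentrated on $\bar A$. Given this, the matching total masses combined with the countable additivity of $A \mapsto \mu_A$ (inherited from $\omega(x,\cdot)$ via uniqueness of the Martin representation) identify $\mu_A$ with $\omega(x_0, \cdot)\lvert_A$ by a standard partition-refinement argument. To establish the concentration, fix $z_0 \in \partial B \setminus \bar A$ together with a small open neighborhood $V$ of $z_0$ with $\bar V \cap \bar A = \varnothing$; the goal is to show $\omega_A(x) \to 0$ as $x \to z_0$ from inside $B$. The key observation is that $\{X_{\tau_B} \in A\}$ forces a continuous exit, because the jump kernel $J(x, y)\,dy$ is absolutely continuous with respect to Lebesgue measure, so jumps of $X$ land in $\mathbb R^d \setminus \bar B$ with probability one and the Lebesgue-null set $\partial B$ can only be reached by continuous paths. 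Combining Proposition \ref{bdryest2} with the bound $\mathbb E^x \tau_B \to 0$ as $\delta_B(x) \to 0$, the probability of exiting $B$ continuously at a point outside $V$ tends to zero as $x \to z_0$. A Fatou-type boundary-limit argument applied to the representation identity $\omega_A(x) = \int_{\partial B} M(x, z)\,\mu_A(dz)$ then converts the vanishing $\omega_A(x) \to 0$ into the support statement $\mu_A(V \cap \partial B) = 0$.

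The main obstacle is this last paragraph: both proving the boundary limit $\omega_A(x) \to 0$ for $z_0 \notin \bar A$ (which requires delicate control of the continuous-versus-jump exit decomposition near $\partial B$) and converting that pointwise limit into a support property of $\mu_A$ (which relies on the Martin-boundary identification of Theorem \ref{marteqbdry} together with a Fatou-type argument applied to the Martin representation).
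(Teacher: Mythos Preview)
Your route differs from the paper's. Rather than Martin-representing $\omega_A$ and then identifying $\mu_A$, the paper computes the Radon--Nikodym density $f(x,y_0)=d\omega^x/d\omega^{x_0}(y_0)$ pointwise by Lebesgue differentiation: for dyadic cubes $Q_k\ni y_0$ it sets $h_k(x)=\omega(x,Q_k\cap\partial B)/\omega(x_0,Q_k\cap\partial B)$, so that $h_k(x)\to f(x,y_0)$, and then proves $h_k(x)\to M(x,y_0)$ by showing the Martin measure $\nu_k$ of $h_k$ is eventually supported in any neighbourhood of $y_0$. Both approaches share the same localization core (a Martin representing measure sits where it should) and both need the boundary vanishing $\omega(\cdot,A)\to 0$ at points of $\partial B\setminus\bar A$. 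The differentiation route buys a clean endgame: once $f(x,y_0)=M(x,y_0)$ for $\omega^{x_0}$-a.e.\ $y_0$, the theorem is immediate. By contrast, your ``standard partition-refinement'' from $\operatorname{supp}\mu_A\subset\bar A$ and $\mu_A(\partial B)=\omega(x_0,A)$ to $\mu_A=\mathbf 1_A\,\omega(x_0,\cdot)$ is not standard: you need an extra step (compact exhaustion plus the countable additivity of $A\mapsto\mu_A$) to upgrade to $\operatorname{supp}\mu_U\subset U$ for \emph{open} $U$, then identify $\mu_{\partial B}$ with $\omega(x_0,\cdot)\rvert_{\partial B}$, and only then pass to general Borel $A$.

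There is also a genuine gap in your conversion step. A ``Fatou-type boundary-limit argument'' is not the mechanism that turns the vanishing $\omega_A(x)\to 0$ into $\mu_A(V\cap\partial B)=0$; no Fatou theorem for $M(\cdot,z)$ has been established here. The correct tool (and what the paper uses for $\nu_k$) is the maximum-principle Lemma~\ref{vanishing}: set $u(x)=\int_{V}M(x,z)\,\mu_A(dz)$, check that $u$ tends to $0$ at every point of $\partial B$ (near $\partial B\setminus V$ because $M(\cdot,z)\to 0$ at boundary points away from $z$, and near $\partial B\cap V$ because $u\le\omega_A\to 0$ there), note $u\equiv 0$ on $B^c$, and conclude $u\equiv 0$ on $B$. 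Your sketch via Proposition~\ref{bdryest2} for the boundary limit is reasonable in spirit, but that proposition is stated for a box of fixed scale $(\delta_0,r_0)$, so you would still need a scaling argument (or simply invoke the Carleson estimate, Theorem~\ref{Carleson}, which is stated for all small $r$ and is what the paper leans on).
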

\begin{proof}
Fix $y_0\in\partial B.$ For each $k\in\mathbb{N},$ let $Q_{y_0k}$
denote the cube of the form $[\frac{j_1}{2^k}, \frac{j_1 +
    1}{2^k}]\times [\frac{j_2}{2^k}, \frac{j_2 + 1}{2^k}] ...\times
[\frac{j_d}{2^k}, \frac{j_d + 1}{2^k}]$ that contains $y_0$ and where
$j_1, \ldots,j_d$ are integers. Define
$$h_k (x) = \dfrac{\omega(x, Q_{y_0k}\cap\partial B)}{\omega(x_0, Q_{y_0k}\cap\partial B)}.$$
Note that $h_k$ is harmonic in $B$ and $h_k(x_0)=
1.$ From the Martin representation theorem, we know that there exists
a finite measure $\nu_k$ such that
$$h_k(x) = \int_{\partial B}M(x, z)\nu_k(dz).$$ Since $h_k(x_0) = 1$
and $M(x_0, z) = 1$ for all $z\in\partial B,$ we can infer that
$\nu_k(\partial B) = 1.$ Next, we know from the Lebesgue theorem for
radon measures, that the density $f$ can be expressed as
$$\lim_{k\to\infty} h_k(x) = \lim_{k\to\infty}  \dfrac{\omega(x, Q_{y_0k}\cap\partial B)}{\omega(x_0, Q_{y_0k}\cap\partial B)} = f(x, y_0).$$
We will now show that this limit is also $M(x, y_0).$ Indeed, first observe that since $\nu_k(\partial B) = 1,$  we can write
$$h_k(x) - M(x, y_0) = \int_{\partial B}M(x, z)\nu_k(dz) - \int_{\partial B}M(x, y_0)\nu_k(dz).$$
Therefore 
\begin{multline}\label{mainH}
\mid h_k(x) - M(x, y_0)\mid \leq \int_{\partial B\cap B(y_0, \delta_0)}\mid M(x,
z) - M(x, y_0)\mid \nu_k(dz)\\ + \int_{\partial B\cap B(y_0, \delta_0)^c}
\mid M(x, z) - M(x, y_0)\mid \nu_k(dz)
\end{multline}  
where $\delta_0 >0$ will be specified momentarily. Recall  that $M(x, \cdot)$ is uniformly continuous in $\partial B.$ Therefore given $\epsilon >0,$ there is a $\delta > 0,$ such that if $\mid z - y_0\mid  < \delta,$ $$\mid M(x, z) - M(x, y_0)\mid  < \epsilon.$$ Taking $\delta_0 = \delta$ in (\ref{mainH}), we obtain 
\begin{equation}\label{hmain2}
\mid h_k(x)- M(x, y_0)\mid \leq\epsilon + 2\mid \mid  M(x, \cdot)\mid \mid _{\infty}\nu_k(\partial
B\cap B(y_0, \delta)^c).
\end{equation}
We claim that for $k$ such that $Q_{y_0k}\subset B(y_0, \delta/2),$ $
\nu_k(\partial B\cap B(y_0, \delta)^c)= 0.$ Combined with equation
\eqref{hmain2} this will finish the proof of the theorem. To prove
this, take $k$ large as specified before and write $\mu_k =
\nu_k\mid _{\partial B\cap B(y_0, \delta)^c}.$ Let us define
$$u_k (x) = \int M(x, z)\mu_k(dz).$$
Then $u_k\leq h_k$ and $u_k$ is harmonic in $B$. 

We now recall the fact that if $\mid w-z_1\mid  > \eta,$ then $M(x, w)\to 0$ uniformly as $x\to w'\in B(z_1, \eta/2).$ Therefore using this, we have
$$\lim_{x\to w'}u_k (x) = 0$$ for $w'\in B(y_0, \delta/2)\cap\partial
B.$ Now let $w'\in B(y_0, \delta/2)^c\cap\partial B.$ We know that
$h_k$ vanishes continuously on $\partial B\setminus Q_{y_0(k-1)}.$
Therefore by the Carleson estimate (c.f Theorem \ref{Carleson}),
$\lim_{x\to w'} h_k(x) = 0,$ for $w'\in\partial B\setminus
Q_{y_0(k-1)}$. Since $u_k(x)\leq h_k(x)$ the same is true of $u_k$ as
well.

The upshot of all this is that $u_k$ tends to $0$ on the boundary
$\partial B.$ We already know that $u_k$ is $0$ on the complement of
$B.$ Since $u_k$ is harmonic, we have $u_k(x) = \mathbb
{E}^x(u_k(X_{\tau_B})).$ The boundary values are all $0,$ so $u_k(x) =
0$ in the ball $B.$ But $u_k(x) = \int M(x, z)\mu_k(dz)$ and $M(x, z)$
is positive for all $z\in\partial B$ (from Theorem \ref{BdryHarnack}). So this means
that $\mu_k = \nu_k\mid _{\partial B\cap B(y_0, \delta)^c} = 0.$
This finishes the proof of the theorem.
\end{proof} 
We are now ready to prove Proposition \ref{conditional}.

\vspace{0.1in}

\noindent \textbf{Proof of Proposition \ref{conditional}:}
By the Monotone class theorem, it is enough to prove the lemma for
$\phi$ of the form $\phi = 1_{\{t < \tau_B\}}\phi_t,$ where $\phi_t$
is a $\mathcal{F}_t$ measurable function. Next, we split the right
hand side as
 \begin{multline}\label{split}
\mathbb{E}^x\left(\mathbb{E}^x_{X_{\tau_B-}}(\phi); X_{\tau_B}\in A\right) = \mathbb{E}^x\left(\mathbb{E}^x_{X_{\tau_B-}}(\phi); X_{\tau_B}\in A, X_{\tau_B}=X_{\tau_B-} \right) \\ + \mathbb{E}^x\left(\mathbb{E}^x_{X_{\tau_B-}}(\phi); X_{\tau_B}\in A, X_{\tau_B}\neq X_{\tau_B-} \right)
\end{multline}
Let us label the two terms in equation (\ref{split}) as $I$ and $II$ respectively. We may write $I$ in the following way 
\begin{eqnarray}\label{split1}
I = \int_{A\cap\partial B}\mathbb{E}^x_z(\phi)\omega(x, dz) &=&
\int_{A\cap\partial B}\mathbb{E}^x_z(\phi)M(x, z)\omega(x_0,
dz)\nonumber\\ &=& \int_{A\cap\partial B}\mathbb{E}^x(t < \tau_B;
\phi_t M(X_t, z))\omega(x_0, dz) \nonumber\\ &=& \mathbb{E}^x\left(t <
\tau_B; \phi_t\int_{A\cap\partial B}M(X_t, z)\omega(x_0, dz)\right)
\nonumber\\ &=& \mathbb{E}^x\left(t < \tau_B; \phi_t\omega(X_t,
A\cap\partial B)\right) \nonumber\\ &=& \mathbb{E}^x\left(t < \tau_B;
\phi_t\mathbb{E}^{X_t}(X_{\tau_B}\in A\cap\partial B)\right)
\nonumber\\ &=& \mathbb{E}^x\left(1_{\{t < \tau_B\}}\phi_t; X_{\tau_B}\in
A\cap\partial B\right)\nonumber\\ &=& \mathbb{E}^x\left(\phi; X_{\tau_B}\in A;
X_{\tau_B}=X_{\tau_B-} \right).
\end{eqnarray}
\noindent To deal with $II,$ we first recall the Levy system formula,
equation (\ref{levy}) which gives us the joint density of
$(X_{\tau_B-}, X_{\tau_B})$ and so we can write
\begin{eqnarray}\label{split2}
II &=& \mathbb{E}^x\left(\mathbb{E}^x_{X_{\tau_B-}}(\phi); X_{\tau_B}\in A, X_{\tau_B}\neq X_{\tau_B-} \right) \\
&=& \int_{A}\int_{B}\mathbb{E}^x_z(\phi)G_B(x , z)J(z, y)dz dy \nonumber\\ 
&=& \int_{A}\int_{B}\mathbb{E}^x\{t < \tau_B; \phi_t G_B(X_t, z)\}J(z, y)dzdy \nonumber\\
&=& \mathbb{E}^x\left(t < \tau_B; \phi_t \int_{A}\int_{B}G_B(X_t, z)J(z, y) dz dy\right) \nonumber\\
&=& \mathbb{E}^x\left(t < \tau_B; \phi_t, \mathbb{E}^{X_t}(X_{\tau_B}\in A; X_{\tau_B}\neq X_{\tau_B-})\right) \nonumber\\
&=& \mathbb{E}^x\left(\phi; X_{\tau_B}\in A; X_{\tau_B}\neq X_{\tau_B-} \right).
\end{eqnarray}

Adding the two equations (\ref{split1}) and (\ref{split2}) gives the required result. The lemma is then proved. 
\qed

\label{fig:1}       
%
%

{\bf Acknowledgements:}
We thank Suresh Kumar for suggesting the
problem intiating the project; Richard Bass and Zhen-Qing Chen for useful discussions and references; and an anonymous referee for pointing out an error in a previous version of the article.



\end{document}